\newcommand{\Q}{\mathbb{Q}}
\newcommand{\C}{\mathbb{C}}
\newcommand{\N}{\mathbb{N}}
\newcommand{\cat}{^\frown}
\newcommand{\dom}{\operatorname{dom}}
\newcommand{\ran}{\operatorname{ran}}
\newcommand{\norm}[1]{\left\| #1 \right\|}
\newcommand{\supp}{\operatorname{supp}}
\theoremstyle{theorem}
\newtheorem{theorem}{Theorem}[section]
\newtheorem{lemma}[theorem]{Lemma}
\theoremstyle{definition}
\newtheorem{definition}[theorem]{Definition}
\theoremstyle{theorem}
\newtheorem{corollary}[theorem]{Corollary}
\theoremstyle{theorem}
\newtheorem{proposition}[theorem]{Proposition}
\theoremstyle{theorem}
\theoremstyle{theorem}
\theoremstyle{definition}
\theoremstyle{theorem}
\numberwithin{equation}{section}
\begin{document}
\title{Analytic computable structure theory and $L^p$ spaces.}

\author{Joe Clanin}
\address{Department of Computer Science\\
Iowa State University\\
Ames, Iowa 50011}
\email{jsc@iastate.edu}

\author{Timothy H. McNicholl}
\address{Department of Mathematics\\
Iowa State University\\
Ames, Iowa 50011 USA}
\email{mcnichol@iastate.edu}

\author{Don M. Stull}
\address{Department of Computer Science\\
Iowa State University\\
Ames, Iowa 50011 USA}
\email{dstull@iastate.edu}
\address{Laboratoire lorrain de recherche en informatique et ses applications\\
Campus scientifique\\
BP 239\\
54506 Vandoeuvre-l\'es Nancy Cedex\\
FRANCE
}
\email{donald.stull@inria.fr}

\begin{abstract}
We continue the investigation of analytic spaces from the perspective of computable structure theory.  
We show that if $p \geq 1$ is a computable real, and if $\Omega$ is a nonzero, non-atomic, and separable measure space, then every computable presentation of $L^p(\Omega)$ is computably linearly isometric to the standard computable presentation of $L^p[0,1]$; in particular, $L^p[0,1]$ is computably categorical.  
We also show that there is a measure space $\Omega$ that does not have a computable presentation even though 
$L^p(\Omega)$ does for every computable real $p \geq 1$.  
\end{abstract}
\maketitle

\section{Introduction}\label{sec:intro}

In 1961, A.N. Mal'cev, motivated by the work of Fr\"olich, Shepherdson and others on effective field theory, set forth the idea of an \emph{effective numbering} of an algebra \cite{Malcev.1961}; these are now called \emph{computable presentations}.    Specifically, a computable presentation of a structure (such as a group, ring, graph, etc.) is an assignment of nonnegative integers to the elements of the domain so that the induced relations (including the relation induced by equality) and functions on the nonnegative integers are computable.  
A computable presentation of a structure can be thought of as a way of imposing a notion of computability on the structure in that it induces a set of computable functions and relations on the structure.  
A structure is \emph{computably presentable} if it has a computable presentation.  So, we can think of the computably presentable structures as those upon which we can compute.  Computable structure theory is the study of computable presentations of mathematical structures.

Fr\"olich and Shepherdson were the first to notice that different computable presentations of a structure may yield different classes of computable sets and operations.  Specifically, they demonstrated that there is a field for which there exist two computable presentations so that a splitting algorithm exists with respect to the first computable presentation but not with respect to the second  \cite{Froehlich.Shepherdson.1956}.   
Accordingly, Mal'cev defined a computably presentable structure to be \emph{autostable} if any two of its computable presentations are computably isomorphic \cite{Malcev.1962}.  Autostable structures are more commonly referred to as \emph{computably categorical}.  Thus, a computably categorical structure can be thought of as one for which there is an absolute notion of computability; for all other structures, computability is referent to a computable presentation. 

The computable categoricity of structures in various algebraic and combinatorial classes (e.g. countable linear orders, groups, graphs, etc.) has been intensively studied.  
However, until recently, analytic structures such as metric spaces and Banach spaces have been ignored in this context. Thus, a research program has lately emerged to apply computable structure theory to analytic spaces.  
One obvious obstacle is that these spaces are generally uncountable.  However, our understanding of computability on analytic spaces has advanced considerably in the last few decades and should no longer be seen 
as an impediment.  

Here we focus on the computable categoricity of $L^p$ spaces due to their centrality in many branches of analysis and computational mathematics.  Indeed, it could be argued that these spaces are more relevant for most mathematicians than the typical countable structures commonly studied in computable structure theory.  It is generally agreed that computability can only be studied on separable spaces (at least with our current understanding of computation).  If an $L^p$ space is separable, then its underlying measure space is separable.  
Thus, we restrict our attention to $L^p$ spaces of separable measure spaces.  The computably categorical $\ell^p$ spaces have been classified \cite{McNicholl.2015}, \cite{McNicholl.2016}.  So, here we will focus on non-atomic spaces.  Our main theorem is the following.

\begin{theorem}\label{thm:main}
If $\Omega$ is a nonzero, non-atomic, and separable measure space, and if $p \geq 1$ is a computable real, then every computable presentation of $L^p(\Omega)$ is computably isometrically isomorphic to the standard computable presentation of $L^p[0,1]$.
\end{theorem}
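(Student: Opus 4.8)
The plan is to reduce the theorem to the computable categoricity of the standard copy of $L^p[0,1]$ and then to build a computable surjective isometry by a back-and-forth that transports a canonical lattice structure. For the classical reduction, note that since $\Omega$ is nonzero, non-atomic, and separable, the measure isomorphism theorem identifies $\Omega$ (modulo null sets, and after normalizing the total mass) with an interval carrying Lebesgue measure, so that $L^p(\Omega)$ is isometrically isomorphic to $L^p[0,1]$. Hence it suffices to prove that every computable presentation of $L^p[0,1]$ is computably isometrically isomorphic to the standard one. The case $p = 2$ is special: there $L^2[0,1]$ is the separable infinite-dimensional Hilbert space, whose computable categoricity is already known (one effectivizes Gram--Schmidt to match orthonormal bases), so I would dispose of it by citing that theory and assume $p \neq 2$ hereafter.

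The essential structural feature of $L^p$ for $p \neq 2$ is that the lattice relation of disjoint support is encoded in the norm: two vectors $f, g$ have disjoint supports if and only if $\norm{f + g}^p = \norm{f - g}^p = \norm{f}^p + \norm{g}^p$, the failure of these identities being governed by the strict (indeed uniform) convexity of $L^p$ expressed through Clarkson's inequalities. This is precisely where $p \neq 2$ is used, since at $p = 2$ both identities collapse to mere orthogonality, which does not detect disjointness. The first technical step is to record a \emph{quantitative} version of this equivalence: a computable modulus such that whenever the two identities hold to within $\varepsilon$, the pair $f, g$ lies within a controlled distance of a genuinely disjoint pair. Because the norm is computable on the given presentation, the two sides of each identity are computable reals, so approximate disjointness is effectively detectable, and the modulus is what will convert approximate witnesses into exact ones in a limit.

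With this in hand, I would run a back-and-forth maintaining a chain of finite disjointly-supported families in each copy -- finite-dimensional sublattices, each spanned by a disjoint family, isometric to a finite-dimensional $L^p$ of a finite measure algebra -- kept matched so that corresponding atoms carry equal norms. The key flexibility is that in $L^p[0,1]$ any atom of a finite partition can be split into two disjoint pieces of any prescribed norms summing correctly, so at each stage I can refine the current family to approximate a new special point of the first copy and, alternately, a new special point of the second copy, all while keeping the two families matched atom-for-atom. On a disjoint family the $L^p$ norm of a linear combination is the $\ell^p$-combination of the atom norms, so the matched families determine a linear isometry between the two spans; since the back-and-forth forces these spans to be dense in both copies, the isometry extends uniquely to the desired computable surjective isometry. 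Concretely, I would start from the root (the unit and its matched image) and repeatedly search the special points of each copy for approximate disjoint splittings of the prescribed norms, invoking the rigidity modulus to guarantee that the searched approximations converge, with a computable rate, to exact disjoint refinements.

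The hard part will be the effective location of these disjoint refinements together with the simultaneous management of density. Disjointness is a closed (equality) condition, so it cannot be confirmed outright from finite norm computations; the construction must instead produce exactly-disjoint elements as effective limits of approximately-disjoint ones, which is exactly why the rigidity lemma must carry an explicit, computable modulus of continuity derived from the uniform convexity of $L^p$. Moreover, a naive forward construction of a splitting tree need not generate a dense subspace -- a non-atomic splitting can accidentally generate a proper sub-$\sigma$-algebra -- so the refinement steps must be interleaved so as to capture every special point of \emph{both} copies, and each capture must be realizable by a disjoint refinement whose norms match the partner copy. Ensuring that all of these per-stage searches converge fast enough that the resulting families, and hence the isometry, are genuinely computable rather than merely $\Delta^0_2$ is, I expect, the principal obstacle.
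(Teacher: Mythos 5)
Your proposal follows essentially the same route as the paper's: detect disjointness of support quantitatively through the norm (the paper's $\sigma$ functional, built from an extension of Lamperti's theorem rather than from Clarkson's inequalities, but the same idea), build an increasing chain of exactly disjoint finite families in the given copy as effective limits of approximately disjoint ones, realize the resulting norm data by intervals in $[0,1]$, and lift the atom-for-atom matching to a linear isometry on the dense union of spans. Your up-front reduction via Carath\'eodory's theorem is legitimate---computable presentations transport along classical isometric isomorphisms, and the paper records exactly this equivalence in its definition of computable categoricity for Banach spaces---but it buys nothing: a transported presentation of $L^p[0,1]$ is just as opaque as the original presentation of $L^p(\Omega)$ (you still cannot see the measurable sets), so all of the real work remains.

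Two places where your sketch is thin enough to flag. First, the rigidity lemma you posit must be proved not for a single pair but for a whole finite family at once, \emph{relative to a family that is already exactly disjoint}: at each stage you hold an exact partial tree together with an approximate refinement of it, and you must correct the new vectors so that they are exactly disjoint from one another and exact subvectors of their parents, with the distance moved controlled by the numerical defect. This is the paper's Theorem \ref{thm:sigma.estimate}, proved by deleting pointwise ``nullifiable sets'' on which $\min\{|f(t)|^p,|g(t)|^p\}$ is dominated by the pointwise defect; it is the technical heart of the argument and does not follow from uniform convexity alone. Second, the ``back'' half of your back-and-forth requires computably producing, inside the opaque copy, an exact subvector of a given vector with a \emph{prescribed} norm, which is an additional nontrivial effective-analysis problem you do not address. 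It is also avoidable: it suffices to run the construction one-directionally, capturing only the special points of the given copy, and then to mirror the resulting tree by intervals in $[0,1]$ whose lengths copy the observed norms. Density of the interval-valued image is then automatic, because for a disintegration of a non-atomic space the maximum norm at level $n$ must tend to $0$ (otherwise an infinite branch converges to a vector $h$ whose proper subvectors, which exist by non-atomicity, escape the closed span); hence the intervals shrink and their spans exhaust $L^p[0,1]$. This is the paper's Proposition \ref{prop:iso.int.disint}, and adopting it removes what is probably the hardest step of your plan.
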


Note that when we say that a measure space $\Omega = (X, \mathcal{M}, \mu)$ is nonzero, we mean that there is a set $A \in \mathcal{M}$ so that $0 < \mu(A) < \infty$ (so that $L^p(\Omega)$ is nonzero).  

There are several corollaries. 

\begin{corollary}\label{cor:Lp[0,1]}
If $\Omega$ is a non-atomic and separable measure space, and if $p \geq 1$ is a computable real, then $L^p(\Omega)$ is computably categorical.  In particular, for every computable real $p \geq 1$, $L^p[0,1]$ is computably categorical.
\end{corollary}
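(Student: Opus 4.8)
The plan is to obtain the corollary as a direct consequence of Theorem~\ref{thm:main}, by observing that ``computably isometrically isomorphic'' is an equivalence relation on the computable presentations of a fixed space. I would first dispose of a degenerate possibility and then reduce the main case to a composition of the isomorphisms supplied by the theorem.

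First I would split on whether $\Omega$ is nonzero. If $\Omega$ is not nonzero, then $L^p(\Omega)=\{0\}$ is the trivial Banach space; it has a single point, and any two of its computable presentations are computably isometrically isomorphic via the map sending $0$ to $0$, so $L^p(\Omega)$ is trivially computably categorical. The substantive case is when $\Omega$ is nonzero, for then $\Omega$ satisfies all the hypotheses of Theorem~\ref{thm:main}. Given two computable presentations $\mathcal{A}_0$ and $\mathcal{A}_1$ of $L^p(\Omega)$, the theorem provides, for each $j\in\{0,1\}$, a computable isometric isomorphism $T_j\colon\mathcal{A}_j\to L^p[0,1]$ onto the standard computable presentation. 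I would then take $T_1^{-1}\circ T_0$, which is an isometric isomorphism from $\mathcal{A}_0$ onto $\mathcal{A}_1$, and verify that it is computable.

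Two facts make this work: the composition of computable maps is computable, and the inverse of a computable isometric isomorphism between computable presentations is again computable. The latter is where I expect the only genuine work to lie. Given a point $y$ of the target presented as a rapidly converging sequence of special points, I would compute $T_1^{-1}(y)$ as follows: because $T_1$ is a surjective isometry, the $T_1$-images of the special points of $\mathcal{A}_1$ form a dense subset of $L^p[0,1]$ that is uniformly computable (using computability of $T_1$), so one can search for a special point $a$ of $\mathcal{A}_1$ with $T_1(a)$ within any prescribed tolerance of $y$; the isometry property then converts this closeness in the target into the same closeness between $a$ and $T_1^{-1}(y)$ in the source, yielding an arbitrarily good approximation to $T_1^{-1}(y)$. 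Granting this, $T_1^{-1}\circ T_0$ witnesses that $\mathcal{A}_0$ and $\mathcal{A}_1$ are computably isometrically isomorphic.

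Finally, for the ``in particular'' clause I would observe that Lebesgue measure on $[0,1]$ is nonzero, non-atomic, and separable, so the standard presentation of $L^p[0,1]$ already falls under the nonzero case; hence $L^p[0,1]$ is computably categorical. The only real obstacle is the computability of the inverse isometry, and even this is a routine consequence of surjectivity together with the isometry property rather than anything particular to $L^p$ spaces.
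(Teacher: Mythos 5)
Your proposal is correct and follows exactly the route the paper intends: the corollary is immediate from Theorem~\ref{thm:main} because ``computably isometrically isomorphic'' is transitive and symmetric, the latter resting on the standard fact that the inverse of a computable surjective linear isometry between computable presentations is computable (by the density-search argument you give). Your explicit treatment of the degenerate case $L^p(\Omega)=\{0\}$, which the corollary's hypotheses do not exclude, is a point the paper passes over silently and is handled correctly.
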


Thus, when $p \geq 1$ is computable, $L^p[0,1]$ possesses an absolute notion of computability and we need not concern ourselves about which computable presentation we choose when studying its computability theory.

\begin{corollary}\label{cor:Lp.iso}
Let $p$ be a computable real so that $p \geq 1$, and suppose $\Omega_1$, $\Omega_2$ are measure spaces that are nonzero, non-atomic, and separable.  Then, each computable presentation of 
$L^p(\Omega_1)$ is computably isometrically isomorphic to each computable presentation of $L^p(\Omega_2)$.
\end{corollary}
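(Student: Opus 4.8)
The plan is to deduce Corollary~\ref{cor:Lp.iso} from Theorem~\ref{thm:main} together with the observation that \emph{computable isometric isomorphism} is an equivalence relation on computable presentations. First I would fix arbitrary computable presentations $\mathcal{A}_1$ of $L^p(\Omega_1)$ and $\mathcal{A}_2$ of $L^p(\Omega_2)$, and let $\mathcal{S}$ denote the standard computable presentation of $L^p[0,1]$. Since $\Omega_1$ and $\Omega_2$ are each nonzero, non-atomic, and separable, Theorem~\ref{thm:main} applies to both presentations and yields computable isometric isomorphisms $T_1 \colon \mathcal{A}_1 \to \mathcal{S}$ and $T_2 \colon \mathcal{A}_2 \to \mathcal{S}$. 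It then suffices to manufacture a computable isometric isomorphism from $\mathcal{A}_1$ onto $\mathcal{A}_2$ out of $T_1$ and $T_2$.

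The substantive point is symmetry: I must check that the inverse $S := T_2^{-1}$ of the computable surjective linear isometry $T_2$ is again computable. Abstractly $S$ is a surjective linear isometry (a bijective isometry has an inverse that is again a bijective isometry), so only effectivity is in question. To compute $S(y)$ from a name of a point $y \in \mathcal{S}$, I would search through the distinguished special points $a$ of $\mathcal{A}_2$, use the computability of $T_2$ to approximate $\norm{T_2(a) - y}$, and halt when a special point is found with $\norm{T_2(a) - y} < 2^{-n}$. Such a point exists for each $n$ because the special points of $\mathcal{A}_2$ are dense and $T_2$ is surjective, so their images are dense in $\mathcal{S}$; and since $T_2$ is a bijective isometry, $\norm{a - S(y)} = \norm{T_2(a) - y} < 2^{-n}$. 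Thus the located special points converge to $S(y)$ at the prescribed rate, which is exactly a name of $S(y)$ in $\mathcal{A}_2$, and the procedure is uniform, so $S$ is computable.

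Transitivity is routine: the composition of computable maps is computable, and the composition of surjective linear isometries is again a surjective linear isometry. Hence $T_2^{-1} \circ T_1$ is a computable isometric isomorphism from $\mathcal{A}_1$ onto $\mathcal{A}_2$, which is precisely the desired conclusion. The only genuine obstacle is the symmetry step just described; it is a standard fact about effective isometries, and the exact norm preservation of an isometry (which serves as its own modulus of continuity) makes the required approximation bookkeeping entirely routine.
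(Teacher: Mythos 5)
Your proposal is correct and matches the paper's (implicit) derivation: the corollary is obtained directly from Theorem \ref{thm:main} by mapping both presentations computably and isometrically onto the standard presentation of $L^p[0,1]$ and composing, with the inverse of a computable surjective linear isometry being computable by exactly the density-search argument you describe. The paper treats this as immediate and gives no further details, so your write-up supplies precisely the routine bookkeeping the authors omit.
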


\begin{corollary}\label{cor:msr}
If $\Omega^\#$ is a computable presentation of a nonzero and non-atomic measure space $\Omega$, 
and if $p \geq 1$ is a computable real, then the induced computable presentation of $L^p(\Omega)$ is computably isometrically isomorphic to $L^p[0,1]$.
\end{corollary}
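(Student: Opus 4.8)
The plan is to reduce Corollary~\ref{cor:msr} to Theorem~\ref{thm:main}. Since $\Omega$ is assumed nonzero and non-atomic, the only hypothesis of Theorem~\ref{thm:main} that is not immediately given is separability of $\Omega$; and the only point to check about the conclusion is that the induced structure on $L^p(\Omega)$ really is a computable presentation of the Banach space $L^p(\Omega)$. Once both are settled, Theorem~\ref{thm:main} applies verbatim and produces a computable isometric isomorphism onto the standard presentation of $L^p[0,1]$, which in particular yields the asserted isomorphism with $L^p[0,1]$.

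First I would argue that any measure space admitting a computable presentation is automatically separable. A computable presentation $\Omega^\#$ supplies a countable family $(A_i)_{i \in \N}$ of measurable sets of finite measure, on which $\mu$ and the Boolean operations are computable, and whose generated ring is dense in $\mathcal{M}$ with respect to the pseudometric $d(A,B) = \mu(A \triangle B)$. Consequently the rational simple functions supported on this ring are dense in $L^p(\Omega)$, so $L^p(\Omega)$ is separable, and hence (as noted in the introduction) $\Omega$ is separable. This is exactly the missing hypothesis needed to invoke the main theorem.

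Next I would verify that the induced structure is a computable presentation of $L^p(\Omega)$ in the sense required by Theorem~\ref{thm:main}. The distinguished dense sequence consists of the rational simple functions $\sum_i a_i \chi_{A_i}$ built from the presented sets; their density in $L^p(\Omega)$ is the standard fact just used, tracked effectively. The substantive point is uniform computability of the norms. After disjointifying the representing sets using the computable Boolean operations, one has $\norm{\sum_i a_i \chi_{B_i}}_p = \left( \sum_i |a_i|^p \mu(B_i) \right)^{1/p}$ with the $B_i$ pairwise disjoint; since $\mu$ is computable on the presented ring and $p$ is a computable real (so that $t \mapsto t^p$ and $t \mapsto t^{1/p}$ are computable on $[0,\infty)$), these norms are uniformly computable from indices. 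This is precisely what makes the induced structure a computable presentation of $L^p(\Omega)$.

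With separability established and the induced presentation confirmed, Theorem~\ref{thm:main} applies directly. The main obstacle is the norm computation in the second step: one must disjointify the representing sets effectively and control the $p$-th power and $p$-th root operations uniformly, so that the norm of an arbitrary rational simple function is computable uniformly in its index rather than merely for each fixed function. Everything else is an application of the main theorem together with the elementary observation that a computably presented measure space is separable.
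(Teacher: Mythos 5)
Your proposal is correct and matches the paper's intended route: the induced structure is shown to be a computable presentation via exactly the disjointification argument of Theorem \ref{thm:comp.str.cms}, separability of $\Omega$ follows because a computably presented measure space is countably generated (hence separable by Theorem \ref{thm:density.ring}), and then Theorem \ref{thm:main} applies verbatim. The only cosmetic point is that density of the presented ring in $\mathcal{M}$ is not part of the definition of a computable presentation but is Halmos's theorem, which you are implicitly invoking.
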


Previously, the second author showed that $\ell^p_n$ is computably categorical when $p \geq 1$ is a computable real.  This provided the first non-trivial example of a computably categorical Banach space that is not a Hilbert space.  Corollary \ref{cor:Lp[0,1]} provides the first example of a computably presentable and infinite-dimensional Banach space that is computably categorical but not a Hilbert space.

Our main theorem can be seen as an effective version of a result of Carath\'eodory: if $\Omega$ is a measure space that is nonzero, non-atomic, and separable, then $L^p(\Omega)$ is isometrically isomorphic to $L^p[0,1]$ \cite{Cembranos.Mendoza.1997}.  
However, our proof is not a mere effectivization of a classical proof.  For, the classical proofs of Carath\'eodory's result all begin with a sequence of transformations on the underlying measure space.  Specifically, it is first shown that there is a $\sigma$-finite measure space $\Omega_1$ so that $L^p(\Omega)$ is isometrically isomorphic to 
$L^p(\Omega_1)$.  It is then shown that there is a probability space $\Omega_2$ so that $L^p(\Omega_2)$ is isometrically isomorphic to $L^p(\Omega_1)$ is then shown to be isometrically isomorphic to $L^p[0,1]$.  This approach is the natural course to take in the classical setting wherein one has full access to the $L^p$ space and to the underlying measure space.  But, in the world of effective mathematics, a computable presentation of $L^p(\Omega)$ does not yield a computable presentation of the underlying measure space; i.e. it allows us to `see' the vectors but not necessarily the measurable sets.  This point will be made precise by way of an example in Section \ref{sec:comp.msr.spaces}.  In particular, Theorem \ref{thm:main} is a stronger result than Corollary \ref{cor:msr}.  Thus our proof of Theorem \ref{thm:main} yields a new proof of Carath\'eodory's result that does not make any transformations on the underlying measure space.  Our main tool for doing this is the concept of a disintegration of an $L^p(\Omega)$ space which was previously used for $\ell^p$ spaces by the second author but which we introduce here for arbitrary $L^p$ spaces.  

The paper is organized as follows.  Section \ref{sec:background} presents background and preliminaries from 
analysis and computability theory; in particular it gives a very brief survey of computable structure theory in the countable setting and a summary of prior results on analytic computable structure theory.  More expansive surveys of classical computable structure theory can be found in \cite{Fokina.Harizanov.Melnikov.2014} and \cite{Harizanov.1998}.  Section \ref{sec:overview} gives an overview of the proof of Theorem \ref{thm:main}.  
In Section \ref{sec:classical}, we develop precursory new material from classical analysis, in particular on disintegrations.  Section \ref{sec:computable} contains our new results on computable analysis and forms the bridge from the classical material in Section \ref{sec:classical} to Theorem \ref{thm:main}.  Section \ref{sec:comparison} contrasts our methods with those used for $\ell^p$ spaces.  Section \ref{sec:rcc} explores relative computable categoricity of $L^p$ spaces.  Results on computable measure spaces and related $L^p$ spaces are expounded in Section \ref{sec:comp.msr.spaces}.  
Section \ref{sec:conclusion} gives concluding remarks.\footnote{After submission, the authors became aware of the work of F. Steinberg on representations of $L^p$ spaces in \cite{Steinberg.2017}.  Our main theorem can be seen as an extension of Theorem 3.8 of his paper.}

\section{Background and preliminaries}\label{sec:background}

We first cover preparatory material from classical mathematics after which we summarize preliminaries 
from computable (effective) mathematics.  In each case we summarize relevant standard information and content specific to this paper.  We then briefly survey the background of classical (i.e. countable) computable structure theory and prior results in analytic computable structure theory.  

\subsection{Classical world}\label{subsec:back.classical}

We begin with a few preliminaries from discrete mathematics.  We then cover preliminaries from 
measure theory and Banach spaces (in particular, $L^p$ spaces).

\subsubsection{Discrete preliminaries}

When $A$ is a finite set, we denote its cardinality by $\# A$.

When $\mathbb{P} = (P, \leq)$ is a partial order and $a,b \in P$, we write $a | b$ if $a,b$ are incomparable; i.e. if $a \not \leq b$ and $b \not \leq a$.  A lower semilattice $(\Lambda, \leq)$ is \emph{simple} if $\mathbf{0}$ is the meet of any two incomparable elements of $\Lambda$.   A lower semilattice $\Lambda'$ is a \emph{proper extension} of a lower semilattice $\Lambda$ if 
$\Lambda \subset \Lambda'$ and for every $u \in \Lambda' - \Lambda$ there is no nonzero $v \in \Lambda$ so that $u > v$.

Suppose $\mathbb{P}_0 = (P_0, \leq_0)$ and $\mathbb{P}_1 = (P_1, \leq_1)$ are partial orders.  
A map $f : P_0 \rightarrow P_1$ is \emph{monotone} if $f(a) \leq_1 f(b)$ whenever $a \leq_0 b$ and is \emph{antitone} if $f(b) \leq_1 f(a)$ whenever $a \leq_0 b$ \; \cite{Chajda.Halavs.Radomir.Kuhr.2007}.

$\N$ denotes the set of all nonnegative integers.  $\N^*$ denotes the set of all finite sequences
of nonnegative integers.  (We regard a sequence as a map whose domain is an initial segment of $\N$.)  These sequences are referred to as \emph{nodes} and the empty sequence $\lambda$ is referred to as the \emph{root node}.  When $\nu \in \N^*$, $|\nu|$ denotes the length of $\nu$ (i.e. the cardinality of the domain of $\sigma$).  When $\nu, \nu' \in \N^*$, we write 
$\nu \subset \nu'$ if $\nu$ prefixes $\nu'$; in this case we also say that $\nu$ is an \emph{ancestor} of $\nu'$ and that $\nu'$ is a \emph{descendant} of $\nu$.  Thus, $(\N^*, \subseteq)$ is a partial order.  When $\nu, \nu' \in \N^*$, write 
$\nu\cat\nu'$ for the concatenation of $\nu$ with $\nu'$.  We say that $\nu'$ is a \emph{child} of $\nu$ if 
$\nu' = \nu\cat(n)$ for some $n \in \N$ in which case we also say that $\nu$ is the \emph{parent} of $\nu'$.  
If $\nu$ is a node, then $\nu^+$ denotes the set of all children of $\nu$ and if $\nu$ is a non-root node then 
$\nu^-$ denotes the parent of $\nu$.  We denote the lexicographic order of $\N^*$ by $<_{\rm lex}$.

If $S$ is a set of nodes, then $\nu \in S$ is \emph{terminal} if $\nu^+ \cap S = \emptyset$.

By a \emph{tree} we mean a set $S$ of nodes so that each ancestor of a node of $S$ also belongs to $S$; i.e. $S$ is closed under prefixes.   
A set $S$ of nodes is an \emph{orchard} if it contains all of the non-root ancestors of each of 
its nodes and does not contain the root node; equivalently, if $\emptyset \not \in S$ and $S \cup \{\emptyset\}$ is a tree.  Note that if $(\Lambda, \leq)$ is a finite simple lower semilattice, then $(\Lambda - \{\mathbf{0}\}, \geq)$ is isomorphic to an orchard.

\subsubsection{Measure-theoretic preliminaries}

We begin by summarizing relevant facts about separable measure spaces and atoms.

Suppose $\Omega = (X, \mathcal{M}, \mu)$ is a measure space.  A collection $\mathcal{D} \subseteq \mathcal{M}$ of sets whose measures are all finite is \emph{dense in $\Omega$} if for every $A \in \mathcal{M}$ with finite measure and every $\epsilon > 0$ there exists $D \in \mathcal{D}$ so that $\mu(D \triangle A) < \epsilon$.  A measure space is \emph{separable} if it has a countable dense set of measurable sets.  

A measurable set $A$ of a measure space $\Omega$ is an \emph{atom} of $\Omega$ if $\mu(A) >0$ and if there is no measurable subset $B$ of $A$ so that $0 < \mu(B) < \mu(A)$.  If $\Omega$ has no atoms, it is said to be \emph{non-atomic}.   The following is due to Sierpinski \cite{Sierpinski.1922}.

\begin{theorem}\label{thm:atomic}
Suppose $\Omega$ is a non-atomic measure space.  
Then, whenever $A$ is a measurable set and $0 < r < \mu(A) < \infty$, there is a 
measurable subset $B$ of $A$ so that $\mu(B) = r$.
\end{theorem}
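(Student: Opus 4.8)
The plan is to reduce everything to a single auxiliary fact, which I will call the small-sets lemma: in a non-atomic measure space every measurable set of positive measure contains measurable subsets of arbitrarily small positive measure. To prove it, fix $E$ with $\mu(E) > 0$ and iterate the definition of non-atomicity. Since $E$ is not an atom there is a measurable $E' \subseteq E$ with $0 < \mu(E') < \mu(E)$, and replacing $E'$ by whichever of $E'$, $E \setminus E'$ has the smaller measure we may assume $\mu(E') \le \tfrac{1}{2}\mu(E)$. Repeating this inside $E'$, and so on, produces measurable sets of positive measure whose measures are at most $2^{-n}\mu(E) \to 0$, which gives the lemma.

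Given the lemma, I would build the desired $B$ by an increasing exhaustion that never exceeds the target $r$. Set $B_0 = \emptyset$, and having chosen a measurable $B_n \subseteq A$ with $\mu(B_n) \le r$, put $t_n = \sup\{\mu(C) : B_n \subseteq C \subseteq A,\ \mu(C) \le r\}$; then $\mu(B_n) \le t_n \le r$, and I choose $B_{n+1} \supseteq B_n$ with $\mu(B_{n+1}) \le r$ and $\mu(B_{n+1}) > t_n - 2^{-n}$. Because every admissible competitor for $t_{n+1}$ contains $B_{n+1} \supseteq B_n$ and is therefore admissible for $t_n$, the sequence $(t_n)$ is non-increasing; together with $t_n - 2^{-n} < \mu(B_{n+1}) \le t_{n+1} \le t_n$ this forces $\mu(B_n)$ and $t_n$ to converge to a common limit $t \le r$. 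Setting $B = \bigcup_n B_n$, continuity of measure from below gives $\mu(B) = t$.

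The crux is to show that $t = r$, i.e. that the target is actually attained rather than merely approached. Suppose instead $t < r$ and set $\delta = r - t > 0$. Since $\mu(A \setminus B) = \mu(A) - t \ge \mu(A) - r > 0$, the small-sets lemma supplies a measurable $C \subseteq A \setminus B$ with $0 < \mu(C) < \delta$. Then for every $n$ the set $B_n \cup C$ contains $B_n$, lies in $A$, and has measure $\mu(B_n) + \mu(C) \le t + \mu(C) < r$, so it is admissible for $t_n$ and yields $t_n - \mu(B_n) \ge \mu(C) > 0$. This contradicts $t_n - \mu(B_n) \to 0$. Hence $t = r$ and $\mu(B) = r$, as required.

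I would flag this attainment step as the main obstacle. The naive route of taking a supremum over all subsets of $A$ of measure at most $r$ and forming unions along a maximizing sequence fails, because the union of two sets each of measure $\le r$ can easily exceed $r$. Forcing the approximating sets to be nested and to respect the budget $r$ at every stage is precisely what allows the small-sets lemma to convert the statement ``$B$ has not yet reached measure $r$'' into a genuine contradiction.
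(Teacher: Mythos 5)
Your proof is correct. Note that the paper itself offers no proof of this statement: it is quoted as a known result of Sierpi\'nski with a citation to the 1922 paper, so there is no in-text argument to compare against. Your argument is the standard greedy-exhaustion proof of that classical result, and every step checks out: the halving argument gives subsets of arbitrarily small positive measure inside any finite-measure set of positive measure (which is all that is needed, since the lemma is only invoked on $A \setminus B \subseteq A$); the nested sequence $B_n$ with the running suprema $t_n$ satisfies $t_n - 2^{-n} < \mu(B_{n+1}) \le t_{n+1} \le t_n$, forcing $\mu(B_n)$ and $t_n$ to a common limit $t \le r$; and the attainment step correctly derives a contradiction from $t < r$ by adjoining a small set $C \subseteq A \setminus B$ with $0 < \mu(C) < r - t$, since $B_n \cup C$ stays within the budget and witnesses $t_n - \mu(B_n) \ge \mu(C)$ for all $n$. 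Your closing remark is also well taken: the budget constraint and the nesting are exactly what prevent the naive supremum argument from overshooting $r$.
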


We will also use the following observation.

\begin{proposition}\label{prop:abs.cont}
Every finite measure that is absolutely continuous with respect to a non-atomic measure is itself non-atomic.
\end{proposition}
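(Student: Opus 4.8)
The plan is to argue by contradiction: assuming the absolutely continuous finite measure $\nu$ has an atom, I will split that atom into two pieces of positive $\nu$-measure, contradicting atomicity. So suppose $\nu \ll \mu$ with $\nu$ finite and $\mu$ non-atomic, and suppose toward a contradiction that $A$ is an atom of $\nu$. Then $\nu(A) > 0$, and since $\nu \ll \mu$ we must have $\mu(A) > 0$ (were $\mu(A) = 0$, absolute continuity would force $\nu(A) = 0$). The atom hypothesis says precisely that every measurable $B \subseteq A$ satisfies $\nu(B) \in \{0, \nu(A)\}$, equivalently $\nu(B) = 0$ or $\nu(A \setminus B) = 0$.

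The engine of the proof is Sierpinski's theorem (Theorem \ref{thm:atomic}) applied to $\mu$ on $A$. Provided $0 < \mu(A) < \infty$, repeated application of Theorem \ref{thm:atomic} lets me partition $A$ into $n$ pairwise disjoint measurable pieces $A = B_1 \cup \dots \cup B_n$ each of $\mu$-measure $\mu(A)/n$ (split off a piece of measure $\mu(A)/n$, then work inside the remainder, which is again non-atomic of finite positive measure, and iterate). Since the $B_i$ partition $A$ with $\sum_i \nu(B_i) = \nu(A) > 0$ while each summand lies in $\{0, \nu(A)\}$, exactly one piece, say $B_{i_0}$, carries the full mass $\nu(B_{i_0}) = \nu(A)$. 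On the other hand, because $\nu$ is finite and $\nu \ll \mu$, absolute continuity holds in its $\epsilon$--$\delta$ form: for $\epsilon = \nu(A)/2$ there is $\delta > 0$ with $\mu(E) < \delta \Rightarrow \nu(E) < \nu(A)/2$. Choosing $n$ so large that $\mu(A)/n < \delta$ forces $\nu(B_{i_0}) < \nu(A)/2 < \nu(A)$, contradicting $\nu(B_{i_0}) = \nu(A)$. (The $\epsilon$--$\delta$ reformulation is itself proved by the usual Borel--Cantelli argument, using finiteness of $\nu$ for continuity from above; I would record this as a preliminary remark.)

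The one gap to fill, and the step I expect to be the main obstacle, is the reduction to the case $0 < \mu(A) < \infty$, since a priori $\mu(A)$ may be infinite and the equal-measure partition above is then unavailable. First note that a non-atomic measure is automatically semifinite: if $\mu(E) = \infty$ then $E$ is not a $\mu$-atom, so it has a subset of measure strictly between $0$ and $\infty$. The plan is therefore to replace $A$ by a subset $A'$ with $0 < \mu(A') < \infty$ and $\nu(A') = \nu(A)$, to which the argument above applies verbatim (any such $A'$ is again a $\nu$-atom). To produce $A'$ one chooses subsets $F_k \subseteq A$ of finite $\mu$-measure with $\nu(F_k)$ approaching $\sup\{\nu(F) : F \subseteq A,\ \mu(F) < \infty\}$ and takes a $\sigma$-finite hull $G = \bigcup_k F_k$; the atom property pins this supremum at $0$ or $\nu(A)$, and one must rule out the degenerate possibility that all finite-$\mu$-measure subsets of $A$ are $\nu$-null. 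This is exactly the delicate point: it holds whenever $\nu$ is carried by a $\sigma$-finite set, in particular whenever $\mu$ is $\sigma$-finite, which is the case for the separable measure spaces that concern us. I would thus invoke $\sigma$-finiteness of $\mu$ directly, noting that under it one may even bypass Sierpinski: Radon--Nikodym gives $d\nu = f\,d\mu$, and splitting $\{f > 0\} \cap A$ into two sets of positive $\mu$-measure (again by non-atomicity of $\mu$) produces two subsets of $A$ of positive $\nu$-measure, which is the desired contradiction.
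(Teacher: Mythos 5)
Your core argument is sound and is essentially the paper's: both proofs feed Sierpinski's theorem (Theorem \ref{thm:atomic}) into absolute continuity to push the full $\nu$-mass of a putative atom onto sets of arbitrarily small $\mu$-measure --- you via an equal $n$-fold partition and the $\epsilon$--$\delta$ form of $\nu \ll \mu$, the paper via repeated halving followed by taking the intersection $B' = \bigcap_n B_n$, which has $\mu(B') = 0$ but $\nu(B') = \nu(B) > 0$. The genuine divergence is exactly the step you flagged as delicate: producing a piece of the atom with finite positive $\mu$-measure and positive $\nu$-measure. The paper does this without adding $\sigma$-finiteness as a hypothesis: writing $f = d\nu/d\mu$, it chooses a simple function $s$ with $0 \leq s \leq f$ and $\int_A s\, d\mu > 0$, and observes that some level set $B_a = s^{-1}[\{a\}] \cap A$ with $a > 0$ must satisfy $0 < \mu(B_a) < \infty$, whence $\nu(B_a) \geq a\,\mu(B_a) > 0$. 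Your appeal to $\sigma$-finiteness of $\mu$ is an assumption not present in the statement of Proposition \ref{prop:abs.cont}, so as written your argument proves slightly less than what is claimed; to be fair, the paper's route presupposes that the Radon--Nikodym derivative exists, which is a comparable implicit hypothesis, and in the one place the proposition is actually invoked (for measures of the form $E \mapsto \int_E |h|^p\, d\mu$, with $\Omega$ separable and non-atomic, hence $\sigma$-finite by the hull argument you sketch) both repairs are available. If you want to close your gap in the same spirit as the paper, replace the $\sigma$-finite hull construction by the level-set trick above; your closing remark about splitting $\{f > 0\} \cap A$ is also a correct and slightly shorter finish once the density is in hand.
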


\begin{proof}
Suppose $\Omega = (X, \mathcal{M}, \mu)$ is a non-atomic measure space, and let $\nu$ be a finite measure that is absolutely continuous with respect to $\mu$.  

We first claim that whenever $A$ is a measurable set so that $\nu(A) > 0$, there is a measurable subset $B$ of $A$ so that $\mu(B) < \infty$ and $\nu(B) > 0$.  For, let $f = d\nu/d\mu$.  Since $\nu$ is finite, $f$ is integrable.  Since $\nu(A) > 0$, there is a simple function $s$ so that $0 \leq s \leq f$ and $\int_A s\ d\mu > 0$.  Let 
$B_a = s^{-1}[\{a\}] \cap A$ for each real number $a$.  
Since $0 < \int_A s\ d\mu < \infty$, it follows that $0 < \mu(B_a) < \infty$ for some positive real $a$.  
Then,
\begin{eqnarray*}
\nu(B_a) & = & \int_{B_a} f\ d\mu\\
& \geq & \int_{B_a} s\ d\mu\\
& = & a \mu(B_a) > 0.
\end{eqnarray*}

Now, let $A$ be a measurable set so that $\nu(A) > 0$.   We show that $A$ is not an atom of $\nu$.  
Choose a measurable subset $B$ of $A$ so that $\nu(B) > 0$ and $\mu(B) < \infty$.  It suffices to show that 
$B$ is not an atom.  By way of contradiction, suppose it is.  We define a descending sequence of measurable subsets of $B$ as follows.  Set $B_0 = B$.  
Suppose $B_n$ has been defined, $\mu(B_n) > 0$, $\nu(B_n) = \nu(B)$, and $\mu(B_n) = 2^{-n} \mu(B)$.  Since $\Omega$ is non-atomic, 
by Theorem \ref{thm:atomic}, there is a measurable subset $C$ of $B_n$ so that $\mu(C) = \frac{1}{2}\mu(B_n)$.  Let $D = B_n - C$.  
Since $B_n \subseteq B$, $B_n$ is an atom of $\nu$.  
Thus, either $\nu(C)$ or $\nu(D)$ is equal to $\nu(B_n)$; without loss of generality, assume $\nu(C) = \nu(B_n)$.  Set $B_{n+1} = C$.  Let $B' = \bigcap_n B_n$.  Thus, $\mu(B') = 0$.  On the other hand, $\nu(B') = \lim_n \nu(B_n) = \nu(B) \neq 0$, and so we have a contradiction since $\nu$ is absolutely continuous with respect to $\mu$.  Thus, $B$ is not an atom of $\nu$.
\end{proof}

We identify measurable sets whose symmetric difference is null.  When we refer to a collection of measurable sets as a lower semilattice, we mean it is a lower semilattice under the partial ordering of inclusion modulo sets of measure $0$.

\subsubsection{Banach space preliminaries}

We first cover material relevant to Banach spaces in general and then that which is specific to $L^p$ spaces.  We take the complex numbers to be the field of scalars however all 
of our results hold in the real case as well.

Suppose $\mathcal{B}$ is a Banach space.  When $X \subseteq \mathcal{B}$, we write 
$\mathcal{L}(X)$ for the linear span of $X$ and 
$\langle X \rangle$ for the closed linear span of $X$; i.e. $\langle X \rangle = \overline{\mathcal{L}(X)}$.
When $K$ is a subfield of $\C$, write $\mathcal{L}_K(X)$ for the linear span of $X$ over $K$; i.e. 
\[
\mathcal{L}_K(X) = \{ \sum_{j = 0}^M \alpha_j v_j\ :\ M \in \N\ \wedge\ \alpha_0, \ldots, \alpha_M \in K\ \wedge\ v_0, \ldots, v_M \in X\}.
\]
Note that the linear span of $X$ is dense in $\mathcal{B}$ if and only if the linear span of $X$ over $\Q(i)$ is dense in $\mathcal{B}$.

When $S$ is a finite set, we let $\mathcal{B}^S$ denote the 
set of all maps from $S$ into $\mathcal{B}$.  When $f \in \mathcal{B}^S$, we write 
$\norm{f}_S$ for $\max\{\norm{f(t)}\ :\ t \in S\}$.  It follows that $\norm{\ }_S$ is a norm on $\mathcal{B}^S$ 
under which $\mathcal{B}^S$ is a Banach space.  

Computability on Banach spaces will be defined in terms of structures and presentations.  Although these
notions may be germane only to computability theory, they are nevertheless purely classical objects so we cover them and related concepts here.
A \emph{structure} on $\mathcal{B}$ is a map $D : \N \rightarrow \mathcal{B}$ so that $\mathcal{B} = \langle \ran(D) \rangle$.  If $D$ is a structure on $\mathcal{B}$, then we call the pair $(\mathcal{B}, D)$ a \emph{presentation} of $\mathcal{B}$.  Clearly, a Banach space has a presentation if and only if it is separable.  
Among all presentations of a Banach space $\mathcal{B}$, one may 
be designated as \emph{standard}; in this case, we will identify $\mathcal{B}$ with its standard presentation.
In particular, if $p \geq 1$ is a computable real, and if $D$ is a standard map of $\N$ onto the set of 
characteristic functions of dyadic subintervals of $[0,1]$, then $(L^p[0,1], D)$ is the standard presentation of $L^p[0,1]$.  If $R(n) = 1$ for all $n \in \N$, then $(\C, R)$ is the standard presentation of $\C$ as a Banach space over itself.

Each presentation of a Banach space induces corresponding classes of rational vectors and rational open balls as follows. Suppose $\mathcal{B}^\# = (\mathcal{B}, D)$ is 
a presentation of $\mathcal{B}$.  Each vector in the linear span of $\ran(D)$ over $\Q(i)$ will be called a 
\emph{rational vector of $\mathcal{B}^\#$}.  An \emph{open rational ball of $\mathcal{B}^\#$} is an open ball whose center is a rational vector of $\mathcal{B}^\#$ and whose radius is a positive rational number. 

A presentation of $\mathcal{B}$ induces a corresponding presentation of $\mathcal{B}^S$ as follows.  Suppose $\mathcal{B}^\# = (\mathcal{B}, D)$ is a presentation of $\mathcal{B}$.  Let $S$ be a finite set, and let $D^S$ denote a standard map of $\N$ onto 
the set of all maps from $S$ into $\ran(D)$.  It follows that $(\mathcal{B}^S)^\# := (\mathcal{B}^S, D^S)$ 
is a presentation of $\mathcal{B}^S$.   

We now cover preliminaries on $L^p$ spaces.  Fix a measure space $\Omega = (X, \mathcal{M}, \mu)$ and a real $p \geq 1$.  When $f \in L^p(\Omega)$, 
we write $\supp(f)$ for the support of $f$; i.e. the set of all $t \in X$ so that $f(t) \neq 0$.  Note that since we identify measurable sets whose symmetric difference is null, $\supp(f)$ is well-defined.  
We say that vectors $f,g \in L^p(\Omega)$ are \emph{disjointly supported} if the intersection of their supports is null; i.e. if $f(t)g(t) = 0$ for almost every $t \in X$.  
If $f,g \in L^p(\Omega)$, then we write $f \preceq g$ if $f(t) = g(t)$ for almost all $t \in X$ for which $f(t) \neq 0$; in this case we say that $f$ is a \emph{subvector} of $g$.  Note that $f$ is a subvector of $g$ if and only if $g - f$ and $f$ are disjointly supported.  Note also that $f \preceq g$ if and only if 
$f = g \cdot \chi_A$ for some measurable set $A$.  

When we refer to a collection $\mathcal{D} \subseteq L^p(\Omega)$ as a 
lower semilattice, we mean it is a lower semilattice with respect to the subvector ordering.

Suppose $S$ is a set of nodes and $\phi : S \rightarrow L^p(\Omega)$.  We say that $\phi$ is \emph{separating} if it maps incomparable nodes to disjointly supported vectors.   

We now formulate a numerical test for disjointness of support.  Suppose $p \geq 1$ and $p \neq 2$.  When $z, w \in \C$ let:
\[
\sigma(z, w) = |4 - 2\sqrt{2}^p |^{-1} |2 (|z|^p + |w|^p) - (|z - w|^p + |z + w|^p)|
\]
We will use the following result from \cite{McNicholl.2016} which extends a theorem of J. Lamperti \cite{Lamperti.1958}.

\begin{theorem}\label{thm:lamperti}
Suppose $p \geq 1$ and $p \neq 2$.  
\begin{enumerate}
	\item For all $z,w \in \C$, \label{thm:lamperti::itm:ineq}
\[
\min\{|z|^p, |w|^p\} \leq \sigma(z,w). 
\]

	\item Furthermore, if $1 \leq p < 2$, then \label{thm:lamperti::itm:sign}
\[
2|z|^p + 2|w|^p - |z+w|^p - |z-w|^p \geq 0
\]
and if $2 < p$ then 
\[
2|z|^p + 2|w|^p - |z+w|^p - |z-w|^p \leq 0.
\]
\end{enumerate}
\end{theorem}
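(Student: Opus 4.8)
The plan is to reduce both statements to the study of a single real-variable function. Throughout, set $\Phi(z,w) = 2|z|^p + 2|w|^p - |z+w|^p - |z-w|^p$, so that $\sigma(z,w) = |4 - 2\sqrt{2}^p|^{-1}\,|\Phi(z,w)|$ and part (\ref{thm:lamperti::itm:sign}) is precisely the assertion that $\Phi \geq 0$ when $p < 2$ and $\Phi \leq 0$ when $p > 2$. If $z = 0$ or $w = 0$ then $\Phi = 0$ and both claims are immediate, so assume $r := |z| > 0$ and $s := |w| > 0$. Regarding $z, w$ as vectors in $\R^2$, write $c = \Re(z\bar{w})/(rs) \in [-1,1]$ (the cosine of the angle between them), so that $|z \pm w|^2 = r^2 + s^2 \pm 2rsc$. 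Introduce
\[
h(x) = (r^2 + s^2 + x)^{p/2} + (r^2 + s^2 - x)^{p/2}, \qquad x \in [-(r^2+s^2),\, r^2+s^2],
\]
and note that $\Phi(z,w) = 2r^p + 2s^p - h(2rsc)$, where $2rs|c| \leq r^2 + s^2$ places $2rsc$ in the domain of $h$.

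For part (\ref{thm:lamperti::itm:sign}), I would first observe that $h$ is even and that $h''(x) = \tfrac{p}{2}(\tfrac{p}{2}-1)\bigl[(r^2+s^2+x)^{p/2-2} + (r^2+s^2-x)^{p/2-2}\bigr]$; the bracketed term is positive, so $h$ is concave when $p < 2$ and convex when $p > 2$. Being even, $h$ therefore attains its maximum (resp.\ minimum) at $x = 0$, giving $h(2rsc) \leq h(0) = 2(r^2+s^2)^{p/2}$ when $p < 2$ and the reverse inequality when $p > 2$. Finally, since $t \mapsto t^{p/2}$ vanishes at $0$ and is concave (hence subadditive) for $p < 2$ and convex (hence superadditive) for $p > 2$, we have $(r^2+s^2)^{p/2} \leq r^p + s^p$ for $p < 2$ and $(r^2+s^2)^{p/2} \geq r^p + s^p$ for $p > 2$. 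Chaining these bounds yields $h(2rsc) \leq 2(r^p + s^p)$ for $p < 2$ and $h(2rsc) \geq 2(r^p+s^p)$ for $p > 2$, which is exactly $\Phi \geq 0$, resp.\ $\Phi \leq 0$.

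For part (\ref{thm:lamperti::itm:ineq}), part (\ref{thm:lamperti::itm:sign}) lets me drop the absolute value on $\Phi$. Assuming by symmetry that $r \leq s$, and taking $p < 2$ (the case $p > 2$ being symmetric with all inequalities reversed), the claim $\min\{r^p,s^p\} \leq \sigma(z,w)$ unwinds to $h(2rsc) \leq 2s^p + (2^{p/2+1} - 2)r^p$. Since $h(2rsc) \leq h(0) = 2(r^2+s^2)^{p/2}$, it suffices to prove the one-variable inequality $(r^2 + s^2)^{p/2} \leq s^p + (2^{p/2} - 1)\,r^p$ for $0 \leq r \leq s$. Normalizing $s = 1$ and putting $u = r^2 \in [0,1]$, this becomes $(1+u)^{p/2} \leq 1 + (2^{p/2}-1)u^{p/2}$, and both sides visibly agree at $u = 0$ and at $u = 1$ (the latter corresponding, after tracing back, to the equality configuration $z = 1$, $w = i$).

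The main obstacle is this final one-variable inequality. Unlike the estimates feeding part (\ref{thm:lamperti::itm:sign}), the difference $\varphi(u) = 1 + (2^{p/2}-1)u^{p/2} - (1+u)^{p/2}$ is not concave (nor convex) throughout $[0,1]$: a direct computation shows its second derivative has the sign of $-\bigl[(2^{p/2}-1)u^{p/2-2} - (1+u)^{p/2-2}\bigr]$, and this bracket changes sign exactly once as $u$ runs over $(0,1]$. Hence one cannot merely invoke the endpoint values. The resolution I would pursue is to show that $\varphi$ is concave near $u = 0$ with $\varphi'(u) \to +\infty$ as $u \to 0^+$, and has at most one inflection point, so that—being concave-then-convex with $\varphi(0) = \varphi(1) = 0$ and increasing at the left endpoint—it cannot dip below zero on $[0,1]$. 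Once this delicate but elementary estimate is secured, assembling part (\ref{thm:lamperti::itm:ineq}) from part (\ref{thm:lamperti::itm:sign}) and the bound $h(2rsc) \leq h(0)$ is routine, as is the bookkeeping for the degenerate cases $r = 0$ or $s = 0$ and for the reversed inequalities when $p > 2$.
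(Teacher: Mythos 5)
The paper itself does not prove this theorem---it imports it from \cite{McNicholl.2016} as an extension of Lamperti's lemma---so I am judging your argument on its own. Your reduction is sound and efficient: introducing $h(x)=(r^2+s^2+x)^{p/2}+(r^2+s^2-x)^{p/2}$, using its concavity/convexity in $x$ to replace $|z+w|^p+|z-w|^p$ by $h(0)=2(r^2+s^2)^{p/2}$, and then invoking sub-/super-additivity of $t\mapsto t^{p/2}$ gives part (2) cleanly, and the unwinding of part (1) to the one-variable inequality $(1+u)^{p/2}\leq 1+(2^{p/2}-1)u^{p/2}$ on $[0,1]$ (reversed for $p>2$) is correct, including the constant bookkeeping.

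The gap is in the final step, and it is twofold. First, your description of the shape of $\varphi(u)=1+(2^{p/2}-1)u^{p/2}-(1+u)^{p/2}$ is wrong in the relevant range: with $q=p/2$, the bracket $(2^q-1)u^{q-2}-(1+u)^{q-2}$ vanishes only at $u^*=c/(1-c)$ where $c=(2^q-1)^{1/(2-q)}$, and for $q\in[1/2,1)$ one checks $c>1/2$, so $u^*>1$ and there is \emph{no} sign change on $(0,1]$; $\varphi$ is in fact concave on all of $[0,1]$ there (the interior inflection you describe only appears for $p<1$, outside the hypotheses). Second, and more seriously, the inference you propose---``concave-then-convex, $\varphi(0)=\varphi(1)=0$, increasing at the left endpoint, hence $\varphi\geq 0$''---is not valid: a function can be concave on $[0,a]$, rise from $0$ and then fall below $0$ before $a$, and then be convex on $[a,1]$ climbing back to $0$ at $1$ (e.g.\ $u-10u^2$ on $[0,\tfrac12]$ continued by a suitable convex parabola), so those hypotheses do not exclude a dip below zero. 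The fix is easy: either verify $\varphi''\leq 0$ on all of $[0,1]$ as above, whereupon concavity plus $\varphi(0)=\varphi(1)=0$ gives $\varphi\geq 0$ at once; or, more robustly, show that $g(u)=((1+u)^q-1)u^{-q}$ is increasing for $q<1$ (and decreasing for $q>1$), since $g'(u)$ has the sign of $k(u)=u(1+u)^{q-1}-(1+u)^q+1$ with $k(0)=0$ and $k'(u)=(1-q)(1+u)^{q-2}$, so $g(u)\leq g(1)=2^q-1$, which is exactly the needed inequality and handles both cases of $p$ uniformly.
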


Again, suppose $p \geq 1$ and $p \neq 2$.  Let $\Omega = (X, \mathcal{M}, \mu)$ be a measure space.  
When $f,g \in L^p(\Omega)$, let 
\[
\sigma(f,g) = |4 - 2\sqrt{2}^p |^{-1} |2 (\norm{f}^p_p + \norm{g}_p^p) - (\norm{f - g}_p^p + \norm{f + g}_p^p)|
\]
It follows from Theorem \ref{thm:lamperti}.\ref{thm:lamperti::itm:sign} that 
\[
\sigma(f,g) = \int_X \sigma(f(t), g(t))\ d\mu(t).
\]
It then follows that $f,g$ are disjointly supported if and only if $\sigma(f,g) = 0$.

When $S$ is a finite set and $\psi : S \rightarrow L^p(\Omega)$, set 
\[
\sigma(\psi)  =  \sum_{\nu | \nu'} \sigma(\psi(\nu), \psi(\nu')) + \sum_{\nu' \supset \nu} \sigma(\psi(\nu') - \psi(\nu), \psi(\nu'))
\]
where $\nu$, $\nu'$ range over $S$.  Theorem \ref{thm:lamperti} now yields the following numerical test to see if a map is separating and antitone.

\begin{corollary}\label{cor:sigma}
Suppose $1 \leq p < \infty$ and $p \neq 2$.  Suppose $S$ is a finite set of nodes and $\phi : S \rightarrow L^p(\Omega)$.  Then, $\phi$ is a separating antitone map if and only if $\sigma(\phi) = 0$.
\end{corollary}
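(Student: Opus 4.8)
The plan is to reduce the statement entirely to the pointwise disjointness test recorded just above, namely that for $f,g \in L^p(\Omega)$ one has $\sigma(f,g) \geq 0$, with $\sigma(f,g) = 0$ if and only if $f$ and $g$ are disjointly supported. I would begin by noting two elementary features of $\sigma$ that make the sum $\sigma(\phi)$ manageable. First, since every summand has the form $\sigma(\cdot,\cdot) \geq 0$, the total $\sigma(\phi)$ is a finite sum of nonnegative reals, so $\sigma(\phi) = 0$ if and only if each individual term vanishes. Second, because $\sigma(z,w)$ depends only on $|z|$, $|w|$, $|z-w|$, and $|z+w|$, it is even in each argument; consequently $\sigma(f,g) = \sigma(-f,g)$, a fact I will use to reconcile the difference $\phi(\nu') - \phi(\nu)$ appearing in the second sum with the natural description of the subvector relation.

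Next I would interpret the two sums separately. For the first sum, the numerical test gives, for incomparable $\nu \mid \nu'$, that $\sigma(\phi(\nu),\phi(\nu')) = 0$ exactly when $\phi(\nu)$ and $\phi(\nu')$ are disjointly supported; hence the first sum vanishes if and only if $\phi$ sends every incomparable pair of nodes in $S$ to a disjointly supported pair of vectors, which is precisely the assertion that $\phi$ is separating. For the second sum, I would invoke the observation from the preliminaries that $f \preceq g$ holds if and only if $g - f$ and $f$ are disjointly supported. Applying this with $f = \phi(\nu')$ and $g = \phi(\nu)$ for a strict ancestor pair $\nu \subset \nu'$, and using evenness to rewrite $\sigma(\phi(\nu') - \phi(\nu), \phi(\nu')) = \sigma(\phi(\nu) - \phi(\nu'), \phi(\nu'))$, the term vanishes exactly when $\phi(\nu') \preceq \phi(\nu)$. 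Thus the second sum vanishes if and only if $\phi(\nu') \preceq \phi(\nu)$ for all $\nu \subset \nu'$ in $S$, which is exactly the statement that $\phi$ is antitone with respect to the prefix order on $S$ and the subvector order on $L^p(\Omega)$.

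Finally I would assemble these equivalences. Every pair of distinct nodes of $S$ is either incomparable or strictly comparable, and the diagonal contributes nothing, so the separating condition is governed entirely by the first sum and the antitone condition entirely by the second. Combining with the nonnegativity remark, $\sigma(\phi) = 0$ if and only if both sums vanish, if and only if $\phi$ is separating and antitone, as desired. I do not expect a genuine obstacle in this argument; the only point demanding care is the sign bookkeeping in the second sum, where one must remember that $\sigma$ is insensitive to negating an argument, so that the term written with $\phi(\nu') - \phi(\nu)$ really does detect the subvector relation phrased via $\phi(\nu) - \phi(\nu')$.
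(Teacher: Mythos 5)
Your proposal is correct and is essentially the argument the paper intends: the corollary is stated as an immediate consequence of Theorem \ref{thm:lamperti} together with the facts that $\sigma(f,g)=0$ iff $f,g$ are disjointly supported and that $f \preceq g$ iff $g-f$ and $f$ are disjointly supported, and your term-by-term analysis of the two sums fleshes out exactly that reduction. The only remark worth making is that the evenness of $\sigma$ is not really needed, since $\phi(\nu')-\phi(\nu)$ and $\phi(\nu)-\phi(\nu')$ have the same support and disjointness of supports is insensitive to sign.
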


Now, suppose $S$ is a tree.  Call a map $\phi : S \rightarrow L^p(\Omega)$ \emph{summative} if 
\[
\phi(\nu) = \sum_{\nu' \in \nu^+ \cap S} \phi(\nu')
\]
 whenever $\nu$ is a nonterminal node of $S$.    A \emph{disintegration} is a summative, separating, and antitone map $\phi : S \rightarrow L^p(\Omega) - \{\mathbf{0}\}$ with the additional property that the linear span of its range is dense in $L^p(\Omega)$.
We define a  \emph{partial disintegration of $L^p(\Omega)$} to be a separating and injective antitone map 
of a finite orchard into $L^p(\Omega) - \{\mathbf{0}\}$.

Now, suppose $\Omega_1$ and $\Omega_2$ are measure spaces.  
Suppose $\phi_1$, $\phi_2$ are antitone maps of $L^p(\Omega_1)$ and $L^p(\Omega_2)$ respectively.  An \emph{isomorphism} of $\phi_1$ with $\phi_2$ is 
an injective monotone map $f$ of $\dom(\phi_1)$ onto $\dom(\phi_2)$ so that 
$\norm{\phi_2 (f(\nu))}_p = \norm{\phi_1(\nu)}_p$ for all $\nu \in \dom(\phi_1)$.

A map $\phi : S \rightarrow L^p[0,1]$ is \emph{interval-valued} if $\phi(\nu)$ is the characteristic function of an interval for each $\nu \in \dom(\phi)$.

\subsection{Computable world}\label{subsec:computable}

We assume the reader is familiar with the rudiments of computability theory such as computable functions, sets, c.e. sets, and oracle computation.  An excellent reference is \cite{Cooper.2004}.

\subsubsection{Computable categoricity in the countable realm}

To give our work some context we synopsize some background material on computable structure theory
in the countable realm; this will motivate our definitions for Banach spaces below as well as some already given.  In particular we give precise definitions of \emph{computable categoricity} and \emph{relative computable categoricity} and survey related results. More expansive expositions can be found in \cite{Fokina.Harizanov.Melnikov.2014} and \cite{Ash.Knight.2000}.  

To begin, suppose $\mathcal{A}$ is a structure with domain $A$.  A \emph{numbering} of $\mathcal{A}$ is a surjection of $\N$ onto $A$.  If $\nu$ is a numbering of $\mathcal{A}$, then the pair $(\mathcal{A}, \nu)$ is called a 
\emph{presentation} of $\mathcal{A}$.  Suppose $\mathcal{A}^\# = (\mathcal{A}, \nu)$ is a presentation of $\mathcal{A}$.  
We say that $\mathcal{A}^\#$ is a \emph{computable presentation} of $\mathcal{A}$ if:
\begin{itemize}
	\item $\{(m,n)\ :\ \nu(m) = \nu(n)\}$ is computable, 

	\item for each $n$-ary relation $R$ of $\mathcal{A}$, $\{(x_1, \ldots, x_n)\ :\ R(\nu(x_1), \ldots, \nu(x_n)\}$ is computable, and 
	
	\item for each $n$-ary function $f : A^n \rightarrow A$ of $\mathcal{A}$, 
	$\{(x_1, \ldots, x_n, y)\ :\ f(\nu(x_1), \ldots, \nu(x_n)) = \nu(y)\}$ is computable.
\end{itemize}
Note we regard constants as $0$-ary functions.  
We say that a countable structure $\mathcal{A}$ is \emph{computably presentable} if it has a computable presentation.  It is well-known that there are countable structures without computable presentations; see \cite{Fokina.Harizanov.Melnikov.2014} for a survey of such results.
  
Suppose $\mathcal{A}_1$ and $\mathcal{A}_2$ are structures, and suppose $\mathcal{A}_j^\# = (\mathcal{A}_j, \nu_j)$ is a presentation of $\mathcal{A}_j$ for each $j$.  We say that a map 
$f : \mathcal{A}_1 \rightarrow \mathcal{A}_2$ is a \emph{computable map of $\mathcal{A}_1^\#$ into $\mathcal{A}_2^\#$} if there is a computable map $F : \N \rightarrow \N$ so that $f(\nu_1(n)) = \nu_2(F(n))$ for all $n \in \N$.  We similarly define what it means for an oracle to compute a map of $\mathcal{A}_1^\#$ into $\mathcal{A}_2^\#$.

We say that a computably presentable countable structure $\mathcal{A}$ is \emph{computably categorial} if any two computable presentations of $\mathcal{A}$ are computably isomorphic.  This is equivalent to saying that $\mathcal{A}_1^\#$ is computably isomorphic to $\mathcal{A}_2^\#$ whenever $\mathcal{A}_1^\#$ and $\mathcal{A}_2^\#$ are computable presentations of structures that are isomorphic to $\mathcal{A}$.  

It is easy to see that $(\Q, <)$ is computably categorical (use Cantor's back-and-forth construction).  On the other hand, a fairly straightforward diagonalization shows that $(\N, <)$ is not computably categorical.

As mentioned in the introduction, the interaction of computable categoricity and structure has been 
studied extensively.  
For example, J. Remmel showed that a computably presentable Boolean algebra is computably categorical if and only it is has finitely many atoms \cite{Remmel.1981.2}.  Goncharov, Lempp, and Solomon proved that a computably presentable ordered Abelian group is computably categorical if and only if it has finite rank \cite{Goncharov.Lempp.Solomon.2003}.  Recently, O. Levin proved that every computably presentable ordered field with finite transcendence degree is computably categorical \cite{Levin.2016}.
The effect of structure on other computability notions has been studied intensively; see e.g. \cite{Hirschfeldt.Khoussainov.Shore.Slinko.2002} for a very good overview.

We now define relative computable categoricity.  We first define the diagram of a presentation.  Suppose $\mathcal{A}$ is a structure and $\mathcal{A}^\# = (\mathcal{A}, \nu)$ is a presentation of $\mathcal{A}$.  
The \emph{diagram} of $\mathcal{A}^\#$ is the join of the following sets. 
\begin{itemize}
	\item $\{(m,n)\ :\ \nu(m) = \nu(n)\}$.

	\item $\{(x_1, \ldots, x_n)\ :\ R(\nu(x_1), \ldots, \nu(x_n))\}$ for each $n$-ary relation $R$ of $\mathcal{A}$.
	
	\item $\{(x_1, \ldots, x_n, y)\ :\ f(\nu(x_1), \ldots, \nu(x_n)) = \nu(y)\}$ for each function $f : A^n \rightarrow A$ of $\mathcal{A}$.
\end{itemize}
We say that a computably presentable countable structure $\mathcal{A}$ is 
\emph{relatively computably categorical} if whenever 
$\mathcal{A}^\#$ and $\mathcal{A}^+$ are computable presentations of $\mathcal{A}$, the join of their diagrams computes an isomorphism of $\mathcal{A}^\#$ onto $\mathcal{A}^+$. 

S. Goncharov gave a syntactic characterization of the relatively computable categorical countable structures \cite{Goncharov.1975}.   Clearly every relatively computably categorical structure is computably 
categorical.  S. Goncharov also constructed a computably categorical structure that is not relatively computably categorical \cite{Goncharov.1977}.  Numerous extensions of these results have been proven; see e.g. the survey \cite{Fokina.Harizanov.Melnikov.2014}.

The effect of structure on the separation of relative computable categoricity from computable categoricity has also been examined.  For example, a relatively computably categorical countable structure is computably categorical if it is either a linear order, a Boolean algebra, or an Abelian $p$-group \cite{Dzgoev.Goncharov.1980}, \cite{Remmel.1981.2}, \cite{Goncharov.1980.2}, \cite{Smith.1981}, \cite{Calvert.Cenzer.Harizanov.Morozov.2009}.

We now turn to the foundations of computable structure theory on analytic spaces.

\subsubsection{Computability on Banach spaces}

Our approach to computable structure theory on Banach spaces parallels the development of computable structure theory on metric spaces in \cite{Greenberg.Knight.Melnikov.Turetsky.2016}; see also \cite{Pour-El.Richards.1989}.  We first define what is meant by a computable presentation of a Banach space.  We then define for a computable presentation of a Banach space the associated computable vectors, sequences, c.e. open sets, and c.e. closed sets.  We then define the computable maps for computable presentations of Banach spaces.  After we summarize fundamental relationships between these notions, we define computable categoricity for Banach sapces.

Suppose $\mathcal{B}$ is a Banach space and $\mathcal{B}^\# = (\mathcal{B}, D)$ is a presentation of 
$\mathcal{B}$.  We say that $\mathcal{B}^\#$ is a \emph{computable presentation} of $\mathcal{B}$
if the norm is computable on the rational vectors of $\mathcal{B}^\#$; more formally if there is an algorithm 
that given any nonnegative integer $k$ and any finite sequence of scalars $\alpha_0, \ldots, \alpha_M \in \Q(i)$ computes a rational number $q$ so that $\left|\norm{\sum_j \alpha_j D(j)} - q\right| < 2^{-k}$.  The standard presentation $\C$ is a computable presentation as is the standard presentation of $L^p[0,1]$ when $p \geq 1$ is a computable real.  We say that $\mathcal{B}$ is \emph{computably presentable} if it has a computable presentation.

We note that if $\mathcal{B}^\#$ is a computable presentation of a Banach space $\mathcal{B}$, and if $S$ 
is a finite set, then $(\mathcal{B}^S)^\#$ (as defined in Section \ref{sec:classical}) is a computable presentation of $\mathcal{B}^S$.   

We now define the computable vectors and sequences of a computable presentation of a Banach space.  Fix a Banach space $\mathcal{B}$ and a computable presentation $\mathcal{B}^\#$ of $\mathcal{B}$.  A vector $v \in \mathcal{B}$ is a \emph{computable vector of $\mathcal{B}^\#$} if 
there is an algorithm that given any nonnegative integer $k$ computes a rational vector $u$ of $\mathcal{B}^\#$ so that $\norm{v - u} < 2^{-k}$; in other words, it is possible to compute arbitrarily good approximations of $v$.  If $v$ is a computable vector of $\mathcal{B}^\#$, then a code of such an algorithm will be referred to as an \emph{index} of $v$.  
A sequence $\{v_n\}_{n \in \N}$ of vectors in $\mathcal{B}$ is a \emph{computable sequence of $\mathcal{B}^\#$} if there is an algorithm that given any nonnegative integers $k,n$ as input computes a rational vector 
$u$ of $\mathcal{B}^\#$ so that $\norm{u - v_n} < 2^{-k}$; in other words, $v_n$ is computable uniformly in $n$.
If $\{v_n\}_{n \in \N}$ is a computable sequence of vectors of $\mathcal{B}^\#$, then a code of such an algorithm shall be referred to as an index of $\{v_n\}_{n \in \N}$.  

Suppose $L^p(\Omega)^\#$ is a computable presentation of $L^p(\Omega)$, and let $S$ be a set of nodes.
A map $\phi : S \rightarrow L^p(\Omega)$ is a \emph{computable map of $S$ into $L^p(\Omega)^\#$} if 
there is an algorithm that computes an index of $\phi(\nu)$ from $\nu$ if $\nu \in S$ and does not halt on any node that is not in $S$.  
%We similarly define (computable) disintegration of $L^p(\Omega)^\#$, etc..

We now define the c.e. open and closed subsets of a computable presentation $\mathcal{B}^\#$ of a Banach space $\mathcal{B}$.  An open subset $U$ of $\mathcal{B}$ is a \emph{c.e. open subset of $\mathcal{B}^\#$} if the set of all open rational balls of $\mathcal{B}^\#$ that are 
included in $U$ is c.e..  If $U$ is a c.e. open subset of $\mathcal{B}^\#$, then an index of $U$ is 
a code of a Turing machine that enumerates all open rational balls that are included in $U$.  
A closed subset $C$ of $\mathcal{B}$ is a \emph{c.e. closed subset of $\mathcal{B}^\#$} if the set of all open rational balls of $\mathcal{B}^\#$ that contain a point of $C$ is c.e..  If $C$ is a c.e. closed subset of $\mathcal{B}^\#$, then an \emph{index} of $C$ is a code of a Turing machine that enumerates all open rational balls that 
contain a point of $C$.

Now we define computable maps.  Suppose $\mathcal{B}_1^\#$ is a computable presentation of $\mathcal{B}_1$ and 
$\mathcal{B}_2^\#$ is a computable presentation of $\mathcal{B}_2$.  A map $T : \mathcal{B}_1 \rightarrow \mathcal{B}_2$ is a \emph{computable map of $\mathcal{B}_1^\#$ into $\mathcal{B}_2^\#$} if 
there is a computable function $P$ that maps rational balls of $\mathcal{B}_1^\#$ to rational balls of 
$\mathcal{B}_2^\#$ so that $T[B_1] \subseteq P(B_1)$ whenever $P(B_1)$ is defined and so that whenever $U$ is a neighborhood of $T(v)$, 
there is a rational ball $B_1$ of $\mathcal{B}_1^\#$ so that $v \in B_1$ and $P(B_1) \subseteq U$.  In other words, it is possible to compute arbitrarily good approximations of $T(v)$ from sufficiently good approximations of $v$.  An index of such a function $P$ will be referred to as an index of $T$.  
Suppose $\mathcal{B}_1^\# = (\mathcal{B}_1, R_1)$.  It is well-known that if $T$ is a bounded linear operator of $\mathcal{B}_1$ into $\mathcal{B}_2$, then $T$ is computable if and only if $\{T(R_1(n))\}_n$ is a computable sequence of $\mathcal{B}_2^\#$.  This principle holds uniformly if one is also provided with a bound on the operator $T$.  That is, from an upper bound on $\norm{T}$ and an index of $\{T(R_1(n))\}_n$ one can compute an index of $T$.  

Note that if $L^p(\Omega)^\#$ is a computable presentation of $L^p(\Omega)$, then 
$\sigma$ is a computable real-valued map from $(L^p(\Omega)^S)^\#$ into $\C$.

The following are `folklore' and follow easily from the definitions.

\begin{proposition}\label{prop:preimage.c.e.open}
Suppose $\mathcal{B}_1$, $\mathcal{B}_2$ are Banach spaces.  Let $\mathcal{B}_j^\#$ be a computable 
presentation of $\mathcal{B}_j$ for each $j$, and let $T$ be a computable map of $\mathcal{B}_1^\#$ into $\mathcal{B}_2^\#$.  Then, $T^{-1}[U]$ is a c.e. open subset of $\mathcal{B}_2^\#$ whenever $U$ is a c.e. open subset of $\mathcal{B}_2^\#$.  Furthermore, an index of $T^{-1}[U]$ can be computed from indices of $T$ and $U$.
\end{proposition}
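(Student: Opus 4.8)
The plan is to produce, uniformly from indices of $T$ and $U$, a single c.e.\ family of rational balls of $\mathcal{B}_1^\#$ whose union is exactly $T^{-1}[U]$; enumerating this family is then the desired index. Let $P$ be the computable function on the rational balls of $\mathcal{B}_1^\#$ supplied by an index of $T$, and set
\[
\mathcal{C} = \{ B_1 : B_1 \text{ a rational ball of } \mathcal{B}_1^\#,\ P(B_1) \text{ defined},\ P(B_1) \subseteq U \}.
\]
I would first note that $T^{-1}[U]$ is open, since a computable map is continuous and $U$ is open.

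The heart of the argument is the set equality $T^{-1}[U] = \bigcup \mathcal{C}$. For the inclusion $\bigcup \mathcal{C} \subseteq T^{-1}[U]$ I would invoke the defining property $T[B_1] \subseteq P(B_1)$ of $P$: if $P(B_1) \subseteq U$, then $T[B_1] \subseteq U$, so $B_1 \subseteq T^{-1}[U]$. For the reverse inclusion I would use the second clause in the definition of a computable map. If $v \in T^{-1}[U]$, then $U$ is a neighborhood of $T(v)$, so there is a rational ball $B_1$ of $\mathcal{B}_1^\#$ with $v \in B_1$ and $P(B_1) \subseteq U$; hence $v \in B_1$ with $B_1 \in \mathcal{C}$. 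Thus the balls in $\mathcal{C}$ cover $T^{-1}[U]$, and combined with the previous inclusion this gives equality.

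It then remains to check that $\mathcal{C}$ is c.e.\ uniformly in the given indices. The condition ``$P(B_1)$ is defined'' is semidecidable from an index of $T$, because $P$ is computable. When $P(B_1)$ is defined it is a single rational ball of $\mathcal{B}_2^\#$, and since an index of $U$ enumerates precisely the rational balls of $\mathcal{B}_2^\#$ included in $U$, the condition ``$P(B_1) \subseteq U$'' is semidecidable by waiting for $P(B_1)$ to appear in that enumeration. Dovetailing these searches over all rational balls $B_1$ of $\mathcal{B}_1^\#$ enumerates $\mathcal{C}$, and the whole procedure is uniform in the indices of $T$ and $U$.

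The step I expect to carry the real content is the reverse inclusion $T^{-1}[U] \subseteq \bigcup \mathcal{C}$: everything else is bookkeeping, but this is exactly the place where the neighborhood-pullback clause in the definition of a computable map must be used, and it is what guarantees that the balls we can effectively enumerate genuinely exhaust the preimage rather than merely sit inside it. Care is also needed at the interface with the definition of a c.e.\ open set, where one checks that enumerating $\mathcal{C}$ —a c.e.\ family of rational balls each contained in $T^{-1}[U]$ and with union $T^{-1}[U]$— does serve as an index of $T^{-1}[U]$.
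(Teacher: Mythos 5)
The paper offers no proof of this proposition; it is listed among the statements described as ``folklore'' that ``follow easily from the definitions,'' and your argument is exactly the standard one. Both halves of the set equality $T^{-1}[U]=\bigcup\mathcal{C}$ correctly invoke the two clauses in the paper's definition of a computable map, and your observation that ``$P(B_1)\subseteq U$'' is semidecidable because an index of $U$ enumerates \emph{all} rational balls included in $U$ is the right use of the paper's definition of a c.e.\ open set. The construction is uniform in the indices of $T$ and $U$, as required.

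The one step you flagged but left unchecked --- that a c.e.\ family $\mathcal{C}$ of rational balls with union $T^{-1}[U]$ ``serves as an index'' of $T^{-1}[U]$ --- is a genuine mismatch with the paper's literal definitions, not mere bookkeeping. An index, as defined, must enumerate \emph{every} rational ball included in $T^{-1}[U]$, and passing from an effective cover of an open set $W$ to an enumeration of all rational balls contained in $W$ is not effective in general: ``$B\subseteq\bigcup_i B_i$'' is a covering condition which, absent compactness of $B$ (unavailable even for intervals in $\R$, let alone balls in an infinite-dimensional space), is $\Pi^0_2$ rather than $\Sigma^0_1$. So what your argument actually produces is a witness for the weaker, more standard notion of c.e.\ open set --- an effective union of rational balls. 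This should be read as a defect in the paper's definition rather than in your proof: the paper itself silently works with the weaker notion (for instance, in part (\ref{lm:c.e.open::itm:distance}) of Lemma \ref{lm:c.e.open} it concludes that the union $\bigcup_\beta V_\beta$ of countably many c.e.\ open sets is c.e.\ open, which is precisely the inference that fails for the literal definition), and every downstream application of the present proposition, including Proposition \ref{prop:comp.point}, needs only an effective cover. To make your writeup airtight you should either adopt the weaker definition explicitly or note that the enumeration of $\mathcal{C}$ is the object actually consumed by the later arguments.
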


\begin{proposition}\label{prop:bounding}
Suppose $\mathcal{B}$ is a Banach space, and let $\mathcal{B}^\#$ be a computable presentation of 
$\mathcal{B}$.   If $f$ is a computable real-valued function from $\mathcal{B}^\#$ into $\C$ with the property that $f(v) \geq d(v, f^{-1}[\{0\}])$ for all $v \in \mathcal{B}$, then, $f^{-1}[\{0\}]$ is c.e. closed.  Furthermore, an index of $f^{-1}[\{0\}]$ can be computed from an index of $f$.
\end{proposition}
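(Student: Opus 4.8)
The plan is to directly exhibit a Turing machine that, uniformly from an index of $f$, enumerates exactly the open rational balls of $\mathcal{B}^\#$ containing a point of $C := f^{-1}[\{0\}]$; by the definition of a c.e.\ closed set, producing such a machine is precisely what is required. First I would note that $C$ is closed: a computable map is continuous, so $f$ is continuous, and $C$ is the preimage of the closed set $\{0\}$. The heart of the matter is the following characterization, which I would establish before turning to effectivity: an open rational ball $B(u,r)$ of $\mathcal{B}^\#$ contains a point of $C$ if and only if there is a rational vector $u'$ of $\mathcal{B}^\#$ with $\norm{u - u'} + f(u') < r$.

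For the backward direction I would use only the bounding hypothesis. If $u'$ is a rational vector with $\norm{u-u'} + f(u') < r$, then $d(u', C) \leq f(u')$ by hypothesis, so the triangle inequality for the distance function gives $d(u, C) \leq \norm{u-u'} + d(u', C) \leq \norm{u - u'} + f(u') < r$; hence some point of $C$ lies within distance $< r$ of $u$, i.e.\ $B(u,r) \cap C \neq \emptyset$. For the forward direction I would instead use continuity of $f$ together with density of the rational vectors. If $v \in C$ and $\norm{u - v} < r$, set $\delta = r - \norm{u-v} > 0$; since $f$ is continuous and $f(v) = 0$, I can choose a rational vector $u'$ so close to $v$ that both $\norm{u' - v} < \delta/2$ and $f(u') < \delta/2$, and then $\norm{u-u'} + f(u') < \norm{u-v} + \delta = r$.

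With the characterization in hand, the enumeration is routine. Because $\mathcal{B}^\#$ is a computable presentation, the norm is computable on rational vectors, and because $f$ is a computable map into $\C$, $f(u')$ is a computable real uniformly in the rational vector $u'$; hence $\norm{u-u'} + f(u')$ can be approximated to within any $2^{-k}$ uniformly in $u$ and $u'$. The machine therefore dovetails over all pairs $(B(u,r), u')$ consisting of an open rational ball and a rational vector, computing $\norm{u-u'} + f(u')$ to better and better precision, and it enumerates $B(u,r)$ as soon as an approximation confirms $\norm{u-u'} + f(u') < r$. By the characterization this enumerates precisely the open rational balls meeting $C$, and since the whole construction depends on $f$ only through its index, an index of $C$ as a c.e.\ closed set is obtained uniformly from an index of $f$.

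I expect the main obstacle to be the forward direction of the characterization, i.e.\ \emph{completeness} of the enumeration. The bounding hypothesis by itself only guarantees \emph{soundness} (every ball we enumerate genuinely meets $C$); to see that \emph{every} ball meeting $C$ is eventually enumerated one must pass from an arbitrary witness $v \in C$ to a \emph{rational} witness $u'$, and this step relies essentially on continuity of $f$ and on $f$ vanishing on $C$, not on the bounding inequality. Keeping the two roles of the hypotheses separate is the one point that requires care.
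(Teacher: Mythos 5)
Your proof is correct, and it is the standard argument: the paper states this proposition as ``folklore'' and gives no proof at all, so there is nothing to diverge from. Your characterization of the balls meeting $f^{-1}[\{0\}]$ via $\norm{u-u'}+f(u')<r$, with the bounding hypothesis supplying soundness and continuity of the computable map $f$ plus density of rational vectors supplying completeness, is exactly the intended reasoning, and the dovetailing step makes the enumeration uniform in an index of $f$ as required.
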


\begin{proposition}\label{prop:comp.point}
Suppose $\mathcal{B}$ is a Banach space and $\mathcal{B}^\#$ is a computable presentation of 
$\mathcal{B}$.  Let $U$ be a c.e. open subset of $\mathcal{B}^\#$, and let $C$ be a c.e. closed subset 
of $\mathcal{B}^\#$ so that $C \cap U \neq \emptyset$.  Then, $C \cap U$ contains a computable vector of 
$\mathcal{B}^\#$.  Furthermore, an index of such a vector can be computed from indices of $U,C$.
\end{proposition}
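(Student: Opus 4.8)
The plan is to carry out an effective nested-ball construction. Since $U$ is c.e.\ open and $C$ is c.e.\ closed, we are handed a Turing machine $M_U$ enumerating all open rational balls of $\mathcal{B}^\#$ that are included in $U$, and a Turing machine $M_C$ enumerating all open rational balls that contain a point of $C$. The key observation is that a rational ball $B = B(c,r)$ enumerated by \emph{both} $M_U$ and $M_C$ necessarily meets $C \cap U$: it contains a point $y$ of $C$, and since $B \subseteq U$ we have $y \in C \cap U$. The hypothesis $C \cap U \neq \emptyset$ together with the openness of $U$ guarantees that such balls exist, and in fact that arbitrarily small such balls exist about any point of $C \cap U$.

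Concretely, I would first dovetail the enumerations of $M_U$ and $M_C$ to find a rational ball $B_0$ that both machines list; such a ball exists because any $x \in C \cap U$ lies in a rational ball included in $U$, and that ball contains the point $x$ of $C$. Given $B_n = B(c_n, r_n)$ already chosen so that $B_n \subseteq U$ and $B_n$ contains a point of $C$, I would search (again by dovetailing) for a rational ball $B' = B(c', r')$ satisfying (i) $r' < 2^{-(n+1)}$, (ii) $\norm{c' - c_n} + r' < r_n$, (iii) $B'$ is enumerated by $M_U$, and (iv) $B'$ is enumerated by $M_C$, and then set $B_{n+1} = B'$. Condition (ii) is confirmable because $c' - c_n$ is a rational vector and the norm is computable on rational vectors, so the strict inequality can be verified by computing $\norm{c'-c_n}$ to sufficient precision; moreover (ii) forces $\overline{B'} \subseteq B_n$. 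Such a $B'$ must exist: choosing $y \in B_n \cap C$, we have $y \in U$, so some rational ball about $y$ of radius $< 2^{-(n+1)}$ has closure inside $B_n$ and inside $U$, and this ball contains $y \in C$; hence it is eventually listed by both machines. Thus every stage of the search halts.

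It then remains to verify that the construction produces the desired vector. By (ii) we have $\overline{B_{n+1}} \subseteq B_n$, so the centers satisfy $\norm{c_{n+1} - c_n} < r_n < 2^{-n}$; hence $\{c_n\}_n$ is Cauchy with a computable modulus, and, since each $c_n$ is a rational vector, its limit $v$ is a computable vector of $\mathcal{B}^\#$ whose index is read off from the construction. Because $v \in \overline{B_{n+1}} \subseteq B_n \subseteq U$ for every $n$, we obtain $v \in U$. For membership in $C$ I would use that each $B_n$ contains a point $y_n$ of $C$ with $\norm{y_n - c_n} < r_n \to 0$, so $y_n \to v$; as $C$ is closed, $v \in C$. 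Therefore $v \in C \cap U$.

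Finally, every step above is uniform in the indices of $U$ and $C$ (these are precisely the codes for $M_U$ and $M_C$), so an index for $v$ is computed from indices of $U$ and $C$, as required. I expect the only genuinely delicate point to be the closedness argument for $v \in C$: the enumeration $M_C$ promises only balls that \emph{meet} $C$, not points of $C$ themselves, so one must exploit the shrinking radii together with the closedness of $C$ to pull the limit back into $C$. The termination of the searches, which rests on the openness of $U$ and the nonemptiness of $C \cap U$, is the other place where the hypotheses are essential.
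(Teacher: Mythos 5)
Your proof is correct and is the standard nested-ball argument; the paper states this proposition as ``folklore'' and gives no proof of it, so there is nothing to contrast it with beyond observing that your construction is exactly the intended one. The only nit is that $r_0$ need not be less than $2^{0}$, so your displayed modulus $\norm{c_{n+1}-c_n}<r_n<2^{-n}$ holds only for $n\geq 1$; either impose $r_0<1$ at stage $0$ or shift the index by one, after which Proposition \ref{prop:eff.cauchy} applies verbatim.
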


\begin{proposition}\label{prop:eff.cauchy}
Suppose $\mathcal{B}$ is a Banach space and $\mathcal{B}^\#$ is a computable presentation of 
$\mathcal{B}$.  Let $\{v_n\}_{n \in \N}$ be a computable sequence of $\mathcal{B}^\#$ so that 
$\norm{v_n - v_{n+1}} < 2^{-n}$ for all $n \in \N$.  Then, 
$\lim_n v_n$ is a computable vector of $\mathcal{B}^\#$.  Furthermore, an index of $\lim_n v_n$
can be computed from an index of $\{v_n\}_{n \in \N}$.  
\end{proposition}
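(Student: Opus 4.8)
The plan is to first verify classically that the limit exists together with an explicit rate of convergence, and then to use the uniform computability of the sequence to extract an algorithm that computes arbitrarily good rational approximations of that limit.

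First I would observe that the hypothesis $\norm{v_n - v_{n+1}} < 2^{-n}$ gives, by the triangle inequality and a telescoping geometric sum, that for all $m > n$,
\[
\norm{v_n - v_m} \leq \sum_{k=n}^{m-1} \norm{v_k - v_{k+1}} < \sum_{k=n}^{m-1} 2^{-k} < 2^{-n+1}.
\]
Hence $\{v_n\}_n$ is a Cauchy sequence, so by completeness of $\mathcal{B}$ the limit $v = \lim_n v_n$ exists. Letting $m \to \infty$ in the estimate above yields the quantitative bound $\norm{v_n - v} \leq 2^{-n+1}$ for every $n$, which is the key feature I will exploit.

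Next I would describe the approximation algorithm. Given a nonnegative integer $k$, I would set $n = k+2$, so that $\norm{v_n - v} \leq 2^{-n+1} = 2^{-k-1}$. Since $\{v_n\}_n$ is a computable sequence of $\mathcal{B}^\#$, I can compute from $k$ and $n$ a rational vector $u$ of $\mathcal{B}^\#$ with $\norm{u - v_n} < 2^{-k-1}$. The triangle inequality then gives
\[
\norm{u - v} \leq \norm{u - v_n} + \norm{v_n - v} < 2^{-k-1} + 2^{-k-1} = 2^{-k},
\]
so $u$ is the desired rational approximation of $v$. This shows $v$ is a computable vector of $\mathcal{B}^\#$.

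The procedure just described is uniform: it invokes an index of the computable sequence $\{v_n\}_n$ as a subroutine and otherwise performs only elementary arithmetic on $k$, so a code for it can be produced effectively from an index of $\{v_n\}_n$, yielding the claimed index of $\lim_n v_n$. I do not expect any real obstacle here; the only points requiring care are the bookkeeping in the telescoping estimate (to make sure the constant is $2^{-n+1}$ rather than $2^{-n}$) and the attendant choice $n = k+2$, so that the two error terms each fall below $2^{-k-1}$ and their sum stays below $2^{-k}$.
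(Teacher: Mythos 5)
Your proof is correct and is exactly the standard argument the paper has in mind: the paper states this proposition without proof, listing it among results that are ``folklore'' and follow easily from the definitions, and your telescoping estimate plus the choice $n = k+2$ is the canonical way to fill that in. The quantitative bookkeeping ($\norm{v_n - v} \leq 2^{-n+1}$, two errors each below $2^{-k-1}$) and the uniformity remark are all accurate.
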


We now define a Banach space $\mathcal{B}$ to be \emph{computably categorical} if any two of its computable presentations are computably isometrically isomorphic; equivalently if $\mathcal{B}_1^\#$ is computably isomorphically isometric to $\mathcal{B}_2^\#$ whenever $\mathcal{B}_1^\#$, $\mathcal{B}_2^\#$ are computable presentations of Banach spaces that are isomorphically isometric to $\mathcal{B}$.

\subsubsection{Summary of prior work in analytic computable structure theory}\label{subsec:survey.prior}

The earliest work in analytic computable structure theory is implicit in the 1989 monograph of Pour-El and Richards \cite{Pour-El.Richards.1989}; namely, it is shown that $\ell^1$ is not computably categorical but that 
all separable Hilbert spaces are.  But, there was no more progress until 2013 when a number of results on metric spaces appeared.  In particular, Melnikov and Nies showed 
that computably presentable compact metric spaces are $\Delta_3^0$-categorical and that there is a computably presentable Polish space that is not $\Delta_2^0$-categorical \cite{Melnikov.Nies.2013}.  At the same time, Melnikov showed that the Cantor space, Urysohn space, and all separable Hilbert spaces are computably categorical (as metric spaces), but that (as a metric space) $C[0,1]$ is not \cite{Melnikov.2013}.  
Recently, Greenberg, Knight, Melnikov, and Turetsky announced an analog of Goncharov's syntactic characterization of relative computable categoricity for metric spaces \cite{Greenberg.Knight.Melnikov.Turetsky.2016}.

New results on Banach spaces began to appear in 2014.  First, Melnikov and Ng showed that $C[0,1]$ is not computably categorical \cite{Melnikov.Ng.2014}.  Then, McNicholl extended the work of Pour-El and Richards by showing that $\ell^p$ is computably categorical only when $p = 2$ and that $\ell^p$ is $\Delta_2^0$-categorical when $p$ is a computable real.  McNicholl also showed that $\ell^p_n$ is computably categorical when $p$ is a computable real and $n$ is a positive integer.   More recently, McNicholl and Stull have shown that whenever $(\ell^p)^\#$ is a computable presentation of $\ell^p$, there is a least powerful Turing degree that computes an isometric isomorphism of $\ell^p$ onto $(\ell^p)^\#$, and that these degrees are precisely the c.e. degrees \cite{McNicholl.Stull.2016}.

\section{Overview of the proof of Theorem \ref{thm:main}}\label{sec:overview}

As noted, every separable $L^2$ space is computably categorical since it is a Hilbert space.  So, we can confine ourselves to the case $p \neq 2$.  The three key steps to our proof of Theorem \ref{thm:main} are encapsulated in the following three theorems.

\begin{theorem}\label{thm:lifting.comp}
Let $L^p(\Omega_1)^\#$ be a computable presentation of $L^p(\Omega_1)$, and let 
$L^p(\Omega_2)^\#$ be a computable presentation of $L^p(\Omega_2)$.  
Suppose 
there is a computable disintegration of $L^p(\Omega_1)^\#$ that is computably isomorphic to a computable disintegration of $L^p(\Omega_2)^\#$.  
Then, there is a computable 
linear isometry of $L^p(\Omega_1)^\#$ onto $L^p(\Omega_2)^\#$.
\end{theorem}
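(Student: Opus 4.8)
\emph{Setup and strategy.} Write $\phi_1 : S_1 \rightarrow L^p(\Omega_1)$ and $\phi_2 : S_2 \rightarrow L^p(\Omega_2)$ for the two given computable disintegrations and $f : S_1 \rightarrow S_2$ for the computable isomorphism between them. Since $f$ is an isomorphism of the two disintegrations, it is an order isomorphism of the trees (so $\nu \subseteq \nu'$ iff $f(\nu) \subseteq f(\nu')$) and satisfies $\norm{\phi_2(f(\nu))}_p = \norm{\phi_1(\nu)}_p$ for every $\nu \in S_1$. The plan is to define a linear map $T$ on the dense subspace $\mathcal{L}(\ran(\phi_1))$ by declaring $T(\phi_1(\nu)) = \phi_2(f(\nu))$ and extending by linearity, and then to show that $T$ is a well-defined surjective linear isometry which is moreover computable. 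By the folklore characterization of computable bounded operators, computability of $T$ will reduce to the uniform computability of the sequence $\{T(D_1(n))\}_n$, where $D_1$ is the structure of $L^p(\Omega_1)^\#$.

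\emph{The crux: norm preservation.} The heart of the argument is the classical identity
\[
\norm{\sum_{j} \alpha_j \phi_1(\nu_j)}_p = \norm{\sum_{j} \alpha_j \phi_2(f(\nu_j))}_p
\]
for all $\nu_1, \dots, \nu_m \in S_1$ and all scalars $\alpha_1, \dots, \alpha_m$; this yields well-definedness of $T$ (apply it when the left side vanishes) together with the isometry property, after which $T$ extends uniquely to an isometry of all of $L^p(\Omega_1)$. To prove the identity I would localize pointwise. At a point $t$, the separating axiom forces the set of indices $j$ with $t \in \supp(\phi_1(\nu_j))$ to be a chain, and the antitone axiom forces the values $\phi_1(\nu_j)(t)$ of the active indices to coincide with a common value $v(t)$; moreover antitonicity shows this index set is exactly $\{ j : \nu_j \subseteq \nu_i \}$, where $\nu_i$ is its $\subseteq$-largest member. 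Partitioning $X_1$ by this deepest node and setting $u_i$ equal to the integral of $|v|^p$ over the corresponding piece, one obtains
\[
\norm{\sum_j \alpha_j \phi_1(\nu_j)}_p^p = \sum_i \left| \sum_{j : \nu_j \subseteq \nu_i} \alpha_j \right|^p u_i,
\qquad
\norm{\phi_1(\nu_k)}_p^p = \sum_{i : \nu_k \subseteq \nu_i} u_i .
\]
The second system has as coefficient matrix the $0$–$1$ incidence matrix recording the relation $\nu_k \subseteq \nu_i$, which is unitriangular for any linear extension of $\subseteq$; hence the weights $u_i$ are uniquely determined by the single-node norms $\norm{\phi_1(\nu_k)}_p$ and by the $\subseteq$-relation alone. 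Because $f$ preserves both the $\subseteq$-relation and the single-node norms, the identical computation in $\Omega_2$ produces the same poset, the same weights $u_i$, and the same coefficients $\sum_{j : \nu_j \subseteq \nu_i} \alpha_j$, giving the displayed identity. Surjectivity of $T$ is then immediate: $\ran(T)$ is closed, contains every $\phi_2(\mu)$ since $f$ maps onto $S_2$, and the span of $\ran(\phi_2)$ is dense.

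\emph{Computability.} It remains to show $\{T(D_1(n))\}_n$ is a computable sequence of $L^p(\Omega_2)^\#$. The one new ingredient is \emph{effective density}: given $n$ and $k$, I would dovetail over finite $\Q(i)$-combinations $w = \sum_l \alpha_l \phi_1(\mu_l)$ with $\mu_l \in S_1$ (running $\phi_1$ on each $\mu_l$ to obtain an index, which halts precisely on $S_1$) and search for one with $\norm{D_1(n) - w}_p < 2^{-k}$. Each such norm is computable, since $D_1(n)$ is a rational vector of $L^p(\Omega_1)^\#$ and each $\phi_1(\mu_l)$ is a computable vector whose index is produced from $\mu_l$; and the search halts because $\mathcal{L}(\ran(\phi_1))$ is dense. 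For such a $w$ we have $T(w) = \sum_l \alpha_l \phi_2(f(\mu_l))$, a computable vector of $L^p(\Omega_2)^\#$ whose index is obtained from the $\mu_l$ via the computable maps $f$ and $\phi_2$, and $\norm{T(D_1(n)) - T(w)}_p = \norm{D_1(n) - w}_p < 2^{-k}$ because $T$ is an isometry. Approximating $T(w)$ to within $2^{-k}$ then yields $T(D_1(n))$ to within $2^{-k+1}$, uniformly in $n$ and $k$.

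\emph{Main obstacle.} I expect the norm-preservation identity to be the genuine difficulty, as it is the one place where the separating and antitone axioms are used in an essential way and where one must see that the tree combinatorics together with the single-node norms rigidly determine the norm of every linear combination; the unitriangularity of the incidence matrix is the key observation that makes the weights $u_i$ canonical and hence transportable across $f$. Once that identity is established, the remaining work is routine: surjectivity and effective density both follow from the density clause of the disintegration definition, and the computability of $T$ is then a uniform-approximation argument built on the computability of $\phi_1$, $\phi_2$, and $f$.
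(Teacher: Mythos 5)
Your proposal is correct and follows essentially the same route as the paper: define $T$ on the linear span of $\ran(\phi_1)$ by $T(\phi_1(\nu)) = \phi_2(f(\nu))$, prove the norm-preservation identity by decomposing into disjointly supported pieces indexed by the deepest "active" node and observing that the piece weights are determined by the single-node norms and the order relation alone (the paper's $\nabla_\phi$ decomposition is exactly your unitriangular inversion), and then get computability of $\{T(D_1(n))\}_n$ from an effective density search over $\Q(i)$-combinations of $\ran(\phi_1)$. The only cosmetic difference is that the paper packages the last step by introducing auxiliary presentations $L^p(\Omega_j)^+$ built from $\phi_j$ and invoking the folklore criterion for computability of bounded operators, whereas you carry out the dovetailing search directly.
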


\begin{theorem}\label{thm:comp.disint.Lp01}
Let $p$ be a computable real so that $p \geq 1$, and let $\Omega$ be a non-atomic separable measure space.   
Suppose $L^p(\Omega)^\#$ is a computable presentation of $L^p(\Omega)$, and suppose $\phi$ is a computable disintegration of $L^p(\Omega)^\#$ so that $\norm{\phi(\lambda)}_p = 1$.  Then, there is a computable disintegration of $L^p[0,1]$ that is computably isomorphic to $\phi$.
\end{theorem}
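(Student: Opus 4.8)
The plan is to define the desired disintegration $\psi$ on the very same tree $S = \dom(\phi)$, making it interval-valued, so that the required isomorphism is simply the identity map on $S$. Since $\phi$ is summative and separating, the children of any nonterminal node are mapped to pairwise disjointly supported vectors whose sum is $\phi(\nu)$; hence $\norm{\phi(\nu)}_p^p = \sum_{\nu' \in \nu^+ \cap S} \norm{\phi(\nu')}_p^p$. Thus the quantities $\ell_\nu := \norm{\phi(\nu)}_p^p$ behave like a measure on $S$: the value at a parent is the sum of the values at its children, and $\ell_\lambda = 1$ by hypothesis. I would therefore assign to each $\nu \in S$ a subinterval $I_\nu \subseteq [0,1]$ of length $\ell_\nu$ with $I_\lambda = [0,1]$, arranging that the children intervals $\{I_{\nu'} : \nu' \in \nu^+ \cap S\}$ tile $I_\nu$ from left to right, and set $\psi(\nu) = \chi_{I_\nu}$. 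By construction $\norm{\psi(\nu)}_p = \norm{\phi(\nu)}_p$, the map is interval-valued, and the identity on $S$ is an injective, monotone, norm-preserving bijection of $\dom(\phi)$ onto $\dom(\psi)$.

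It is then routine that $\psi$ is summative (children intervals tile the parent), separating (incomparable nodes have a common ancestor below which they pass into disjoint children intervals), antitone (descendants give nested intervals), and injective into $L^p[0,1] - \{\mathbf{0}\}$ (each $\ell_\nu > 0$, and no nonterminal node has a single child, else summativity would contradict injectivity of $\phi$). The one substantial classical point is that $\langle \ran(\psi) \rangle = L^p[0,1]$. Here I would exploit non-atomicity. First, $S$ has no terminal nodes: if $\nu$ were terminal, then on $\supp(\phi(\nu))$ every element of $\mathcal{L}(\ran(\phi))$ restricts to a scalar multiple of $\phi(\nu)$, so density of $\mathcal{L}(\ran(\phi))$ would force $L^p(\supp(\phi(\nu)))$ to be one-dimensional, contradicting that $\supp(\phi(\nu))$ is a non-atomic set of positive measure. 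The same idea shows that $\ell_{\nu_n} \to 0$ along every infinite branch $\nu_0 \subset \nu_1 \subset \cdots$: otherwise the nested supports shrink to a set $A_\infty$ of positive measure on which $\mathcal{L}(\ran(\phi))$ again restricts to a one-dimensional space, contradicting non-atomicity together with density. Consequently, for almost every $x \in [0,1]$ the nested intervals containing $x$ along its branch shrink to $\{x\}$; the family $\{I_\nu\}$ therefore separates points almost everywhere and generates the Borel algebra modulo null sets, giving density.

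The main obstacle is to carry out the interval assignment \emph{computably}, and here two features of the hypotheses must be confronted: $\dom(\phi)$ is only guaranteed to be c.e.\ (it is the halting set of the algorithm computing $\phi$), and a node may have infinitely many children, so the left endpoint of $I_{\nu'}$ cannot be obtained by summing the lengths of the siblings preceding $\nu'$ in the fixed child-ordering of $\N^*$ --- we cannot decide which such siblings lie in $S$. I would circumvent this by fixing one enumeration of the c.e.\ set $S$ and laying out each node's children in the \emph{order in which they are enumerated}, placing each newly discovered child immediately to the right of its previously discovered siblings inside the parent interval. Then the left endpoint of $I_{\nu'}$ is a \emph{finite} sum of the lengths $\ell$ of the siblings appearing before $\nu'$ in this enumeration, together with the (recursively computed) left endpoint of the parent; since each $\ell_\mu = \norm{\phi(\mu)}_p^p$ is computable uniformly from $\mu$ (as $\phi$ is a computable disintegration and $p$ is a computable real), these endpoints are computable reals, uniformly in $\nu$. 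Because $\sum_{\nu' \in \nu^+ \cap S} \ell_{\nu'} = \ell_\nu$, the consecutively placed children intervals tile $I_\nu$ in the limit even under infinite branching, so the construction is consistent with the classical analysis above.

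Finally, since the characteristic function of an interval with computable endpoints is a computable vector of $L^p[0,1]$ uniformly in its endpoints, the map $\nu \mapsto \psi(\nu) = \chi_{I_\nu}$ is a computable map of $S$ into $L^p[0,1]$; combined with the verification that $\psi$ is a disintegration, this shows $\psi$ is a computable disintegration of $L^p[0,1]$. As the identity on $S$ is a (trivially computable) isomorphism of $\phi$ with $\psi$, the theorem follows. I expect the delicate step to be the computable layout of the children under infinite branching over a merely c.e.\ index set; the density argument, while the conceptual heart of the classical content, reduces cleanly to the non-atomicity of $\Omega$ as indicated.
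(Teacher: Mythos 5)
Your proposal is correct and follows essentially the same route as the paper: assign to each node $\nu$ of $\dom(\phi)$ an interval of length $\norm{\phi(\nu)}_p^p$ so that the children's intervals tile the parent's, take the identity on $\dom(\phi)$ as the isomorphism, and use non-atomicity of $\Omega$ to show the resulting interval-valued separating antitone map is a disintegration of $L^p[0,1]$ (the paper packages this last step as Proposition \ref{prop:iso.int.disint}). The only differences are cosmetic: the paper handles the c.e.-ness of $\dom(\phi)$ by passing to a computably isomorphic computable tree and ordering children lexicographically, whereas you order children by enumeration, and your branch-wise shrinking/density argument is a minor variant of the paper's level-wise one.
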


\begin{theorem}\label{thm:disint.comp}
Let $p \geq 1$ be a computable real so that $p \neq 2$.   
Suppose $\Omega$ is a separable nonzero measure space, and 
suppose $L^p(\Omega)^\#$ is a computable presentation of $L^p(\Omega)$.   Then, there is a computable disintegration of $L^p(\Omega)^\#$.
\end{theorem}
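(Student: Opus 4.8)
The plan is to build the disintegration $\phi$ as the limit of a computable sequence of finite partial disintegrations, using only the three operations the presentation actually affords us—computing norms, testing disjointness of support via $\sigma$, and extracting computable vectors from intersections of c.e.\ open and c.e.\ closed sets (Proposition \ref{prop:comp.point})—since by hypothesis we have no direct access to the underlying measurable sets.

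First I would reduce to the $\sigma$-finite case and fix the root. Because $\ran(D)$ has dense linear span, every vector of $L^p(\Omega)$ is supported, modulo null sets, on $\bigcup_n \supp(D(n))$, so we may assume $\Omega$ is $\sigma$-finite. I would then set $\phi(\lambda) = f$, where $f = \sum_n 2^{-n}(1 + \norm{D(n)}_p)^{-1} D(n)$; this series converges effectively, so by Proposition \ref{prop:eff.cauchy} $f$ is a computable vector, it is nonzero, and its support is, modulo null sets, all of $\bigcup_n \supp(D(n))$, i.e.\ full. The key structural observation is that the subvectors of $f$, ordered by $\preceq$ and measured by $w \mapsto \norm{w}_p^p$, form a faithful copy of the measure algebra of $\Omega$ reweighted to a finite measure, and that this entire copy is visible through $\sigma$ and the norm alone.

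The engine of the construction is a splitting step, carried out at every node $\nu$ already assigned a vector $v = \phi(\nu)$. Since $w \preceq v$ forces $w$ and $v - w$ to be disjointly supported, any proper nonzero subvector $w$ of $v$ yields a legitimate binary split $v = w + (v - w)$ that is automatically separating, antitone, and summative; I assign $\phi(\nu\cat(0)) = w$ and $\phi(\nu\cat(1)) = v - w$. The set of subvectors of $v$ is c.e.\ closed, uniformly in an index of $v$ (its defining condition $\sigma(w, v - w) = 0$ is handled by Corollary \ref{cor:sigma} and Proposition \ref{prop:bounding}), while the constraint that both pieces be nonzero, and the constraints driving density below, cut out a c.e.\ open set. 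Proposition \ref{prop:comp.point} then returns a computable $w$ with an index computed uniformly from $\nu$, and this search halts exactly when $v$ admits such a split—that is, exactly when $\supp(v)$ is not an atom. Thus the terminal nodes of the tree are precisely the cells supported on atoms, where the search correctly diverges, and the domain $S$ is the (c.e.) set of nodes on which the construction halts, as the definition of a computable disintegration permits.

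To force the span of $\ran(\phi)$ to be dense I would interleave the requirements $R_n$: refine the partition until $f\chi_{\supp(D(n))}$ lies within $2^{-n}$ of $\mathcal{L}(\ran(\phi))$. Meeting $R_n$ amounts to splitting each cell $v$ along $\supp(D(n))$, i.e.\ choosing $w \preceq v$ that approximates the restriction $v\chi_{\supp(D(n))}$; this is an open (hence c.e.) demand measurable through $\sigma(v - w, D(n))$ and the norm, so it can be folded into the splitting step above and the resulting effectively convergent approximations assembled via Proposition \ref{prop:eff.cauchy}. Because $\{\supp(D(n))\}_n$ generates the measure algebra of $\Omega$, dovetailing the $R_n$ drives every non-atomic cell to shrink, yielding density in the limit. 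The main obstacle is precisely this last point: arranging the splits to generate the full measure algebra while never seeing a single measurable set, working only with the subvector lattice of $f$ as read off from $\sigma$ and $\norm{\cdot}_p$; the hypothesis $p \neq 2$ is essential here, since it is what makes $\sigma$—and hence disjointness, the subvector relation, and the measure-algebra structure—numerically accessible via Theorem \ref{thm:lamperti} and Corollary \ref{cor:sigma}.
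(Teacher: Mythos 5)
Your high-level architecture --- build the disintegration as an effective limit of finite computable partial disintegrations, use $\sigma$ as the numerical test for disjointness, and extract computable vectors from intersections of c.e.\ open and c.e.\ closed sets via Proposition \ref{prop:comp.point} --- is the same as the paper's. But two of your key steps fail. First, the root. The support of $f = \sum_n 2^{-n}(1+\norm{D(n)}_p)^{-1}D(n)$ need \emph{not} be $\bigcup_n \supp(D(n))$, because the terms can cancel on a set of positive measure: take $\Omega$ to be an atom $\{a\}$ adjoined to $[0,1]$, let $D(0) = \chi_{\{a\}} + \chi_{[0,1]}$ and $D(1) = -c\chi_{\{a\}} + \chi_{[0,1]}$ with $c>0$ chosen (by the intermediate value theorem, computably) so that the two weighted terms cancel at $a$, and let the remaining $D(n)$ be supported on $[0,1]$ with dense span there. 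The span of $\ran(D)$ is dense, but $f(a)=0$, so every vector you ever build below $\phi(\lambda)=f$ vanishes at $a$ and $\chi_{\{a\}}$ stays at distance $1$ from $\langle\ran(\phi)\rangle$. The paper sidesteps this by working with orchards (no root) throughout, adding new incomparable top-level nodes as needed (Lemma \ref{lm:extend.partial.disintegration}), and only defining $\psi(\lambda)$ at the very end as a weighted sum of the top-level vectors, which are pairwise disjointly supported by construction and hence cannot cancel.

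Second, and more fundamentally, your density mechanism does not work. The generators $D(n)$ can all have full support (e.g.\ $1, x, x^2,\dots$ in $L^p[0,1]$), in which case ``refine until $f\chi_{\supp(D(n))}$ is within $2^{-n}$ of the span'' is satisfied by $f$ alone, no splitting is ever forced, and the construction can halt with a one-dimensional span; the supports of a generating family need not generate the measure algebra even when the family's linear span is dense. Moreover, even where it would force something, your requirement $R_n$ is not visibly c.e.: $f\chi_{\supp(D(n))}$ is not a computable vector of the presentation, and ``$\sigma(v-w,D(n))$ small'' only controls one inclusion of supports. The correct requirement is $d(D(n),\langle\ran(\phi)\rangle)<2^{-n}$ directly, which is a c.e.\ open condition (Lemma \ref{lm:c.e.open}); the real content of the proof is then the purely classical Theorem \ref{thm:extend.partial.disint}, which guarantees that this open set of extensions is nonempty. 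That theorem is where the work lives --- approximating $\chi_{\supp(g)}$ by $s\cdot g$ for simple $s$ (Lemma \ref{lm:approx.reciprocal}) and adjoining sets to a simple lower semilattice (Lemma \ref{lm:semilattice.adjoin}) --- and your proposal contains no substitute for it. A smaller point in the same vein: to get c.e.\ closedness of the set of exact subvectors of $v$ from Proposition \ref{prop:bounding} you need the quantitative bound that $\sigma(w,v-w)$ dominates (a power of) the distance from $w$ to the nearest genuine subvector; Corollary \ref{cor:sigma} gives only the qualitative zero-set characterization, and the needed estimate is Theorem \ref{thm:sigma.estimate}, which you do not invoke.
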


Theorem \ref{thm:main} follows immediately from Theorems \ref{thm:lifting.comp} through Theorem \ref{thm:disint.comp}.   Our proofs of each of these theorems are supported by a certain amount of classical material (that is, material that is devoid of computability content) which is developed in Section \ref{sec:classical}.  The transition from the classical realm to the computable is effected in Section \ref{sec:computable}.

\section{Classical world}\label{sec:classical}

We divide our work into three parts: isomorphism of disintegrations, extension of partial disintegrations, and approximation of separating antitone maps.  Subsection \ref{subsec:isomorphism} contains our results on isomorphism of disintegrations; this material provides the classical component of the proofs of Theorems \ref{thm:lifting.comp} and \ref{thm:comp.disint.Lp01}.  Subsection \ref{subsec:extension} contains our results on extensions of partial disintegrations, and our theorem on approximation of separating antitone maps appears in Subsection \ref{subsec:approx}; the results in these two subsections support our proof of Theorem \ref{thm:disint.comp}.   

\subsection{Isomorphism results}\label{subsec:isomorphism}

Our proof of Theorem \ref{thm:lifting.comp} is based on the idea that an isomorphism can be lifted to form a linear isometry.  We make this precise as follows.

\begin{definition}\label{def:lifts}
Suppose $\phi_1$, $\phi_2$ are disintegrations of $L^p(\Omega_1)$ and $L^p(\Omega_2)$ respectively, and suppose $f$ is an isomorphism of $\phi_1$ with $\phi_2$.  
We say that $T : L^p(\Omega_1) \rightarrow L^p(\Omega_2)$ \emph{lifts} $f$ if 
$T(\phi_1(\nu)) = \phi_2(f(\nu))$ for all $\nu \in \dom(\phi_1)$.   
\end{definition}

We show here that liftings of isomorphisms exist and are unique.  Namely, we prove the following.

\begin{theorem}\label{thm:extension.isomorphism}
Suppose $\Omega_1, \Omega_2$ 
are measure spaces and that $\phi_j$ is a disintegration of $L^p(\Omega_j)$ for each $j$.  
Suppose $f$ is an isomorphism of $\phi_1$ with $\phi_2$.  Then, there is a unique linear 
isometry of $L^p(\Omega_1)$ onto $L^p(\Omega_2)$ that lifts $f$.  
\end{theorem}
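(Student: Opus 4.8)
The plan is to build the lifting on the linear span of $\ran(\phi_1)$ by the only rule compatible with Definition \ref{def:lifts} and then extend by continuity, so I begin with the easy half. \emph{Uniqueness:} any isometry lifting $f$ is pinned down on $\ran(\phi_1)$ by the lifting condition, hence on $\mathcal{L}(\ran(\phi_1))$ by linearity, hence on $\langle \ran(\phi_1)\rangle = L^p(\Omega_1)$ by continuity (the closed span being everything because $\phi_1$ is a disintegration). So at most one lifting can exist.

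\emph{Existence:} I would define $T_0\big(\sum_j \alpha_j \phi_1(\nu_j)\big) := \sum_j \alpha_j \phi_2(f(\nu_j))$ on $\mathcal{L}(\ran(\phi_1))$. The entire problem then reduces to the single norm identity
\[
\norm{\sum_j \alpha_j \phi_1(\nu_j)}_p = \norm{\sum_j \alpha_j \phi_2(f(\nu_j))}_p
\]
for every finite family $\nu_0,\dots,\nu_M \in \dom(\phi_1)$ and all scalars $\alpha_j$: applied to a combination equal to $\mathbf 0$ it yields well-definedness, and in general it says $T_0$ is isometric (hence injective). A linear isometry on a dense subspace extends uniquely, by uniform continuity, to a linear isometry $T$ on $L^p(\Omega_1)$; its range is complete and therefore closed, and it contains $\{\phi_2(f(\nu)) : \nu \in \dom(\phi_1)\} = \ran(\phi_2)$ because $f$ is onto $\dom(\phi_2)$. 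Since the closed span of $\ran(\phi_2)$ is $L^p(\Omega_2)$, the range of $T$ is all of $L^p(\Omega_2)$, so $T$ is onto.

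\emph{The norm identity:} here I would exploit the laminar structure of the supports. Because $\phi_1$ is antitone, $\nu \subseteq \nu'$ forces $\supp(\phi_1(\nu')) \subseteq \supp(\phi_1(\nu))$ with $\phi_1(\nu') = \phi_1(\nu)$ a.e.\ on $\supp(\phi_1(\nu'))$; because $\phi_1$ is separating, incomparable nodes have disjoint supports. Writing $N = \{\nu_0,\dots,\nu_M\}$, the sets $R_j := \supp(\phi_1(\nu_j)) \setminus \bigcup\{\supp(\phi_1(\nu_k)) : \nu_k \in N,\ \nu_k \supsetneq \nu_j\}$ then partition $\bigcup_j \supp(\phi_1(\nu_j))$, and on $R_j$ the combination equals $\beta_j \phi_1(\nu_j)$ with $\beta_j = \sum_{\nu_i \in N,\ \nu_i \subseteq \nu_j} \alpha_i$. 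Integrating $|\cdot|^p$ region by region and using that the minimal proper descendants of $\nu_j$ in $N$ have pairwise disjoint supports gives
\[
\norm{\sum_j \alpha_j \phi_1(\nu_j)}_p^p = \sum_j |\beta_j|^p\Big(\norm{\phi_1(\nu_j)}_p^p - \sum_{\nu_k}\norm{\phi_1(\nu_k)}_p^p\Big),
\]
the inner sum ranging over the minimal elements $\nu_k$ of $\{\nu \in N : \nu \supsetneq \nu_j\}$. The right-hand side depends only on the abstract poset $(N, \subseteq)$ and on the numbers $\norm{\phi_1(\nu_j)}_p$. Since $f$ preserves the norms by hypothesis and preserves the poset because it is an order isomorphism onto its image, the identical formula computes $\norm{\sum_j \alpha_j \phi_2(f(\nu_j))}_p^p$, establishing the identity.

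\emph{Main obstacle:} the delicate point is precisely the claim that $f$ is an \emph{order} isomorphism, since the definition of an isomorphism of disintegrations only requires $f$ to be a monotone bijection, and a monotone bijection of posets need not reflect the order (an incomparable pair could in principle map to a comparable one, which would change the formula above). I expect this to be exactly where the disintegration hypotheses are indispensable: summativity together with injectivity forces every nonterminal node to have at least two children, since a lone child $\nu'$ of $\nu$ would give $\phi_1(\nu) = \phi_1(\nu')$. Consequently proper descent strictly decreases $\norm{\phi_1(\cdot)}_p$, and the branching is rigid enough that a norm-preserving monotone bijection cannot collapse an incomparable pair to a comparable one. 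Verifying this implication incomparable $\mapsto$ incomparable carefully is the crux of the argument; once it is secured, the norm identity and the extension-by-continuity steps above complete the proof.
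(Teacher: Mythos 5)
Your skeleton is the paper's: define $T$ on $\mathcal{L}(\ran(\phi_1))$ by the forced rule, reduce both well-definedness and the isometry property to one norm identity via a ``remnant'' decomposition, extend by continuity, and get surjectivity and uniqueness from density of the spans. Your regions $R_j$ and coefficients $\beta_j=\sum_{\nu_i\subseteq\nu_j}\alpha_i$ are exactly the paper's $S_\phi(\nu)$ and $\sum_{\mu\subseteq\nu}\gamma_\mu$, and your formula for $\norm{\sum_j\alpha_j\phi_1(\nu_j)}_p^p$ is the paper's identity $\sum_\nu\left|\sum_{\mu\subseteq\nu}\gamma_\mu\right|^p\norm{\nabla_{\phi_1}(\nu)}_p^p$; that part is correct.

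The proposal is nonetheless incomplete at exactly the point you flag: you never prove that $f$ reflects incomparability, and your heuristic does not close it. Knowing only that every nonterminal node has at least two children and that proper descent strictly decreases $\norm{\phi_j(\cdot)}_p$ is fully compatible with a monotone norm-preserving bijection sending an incomparable pair to a comparable one, since incomparable siblings may well have distinct norms; so ``the branching is rigid enough'' is an assertion, not an argument. What actually works is conservation of mass at each node combined with surjectivity: summativity plus disjointness of the children's supports gives $\norm{\phi_j(\nu)}_p^p=\sum_{\nu'\in\nu^+\cap S_j}\norm{\phi_j(\nu')}_p^p$ with every summand positive, and one then shows by induction on levels that $f(\lambda)=\lambda$ and that $f$ carries $\nu^+\cap S_1$ bijectively onto $f(\nu)^+\cap S_2$ for every $\nu$. (Inductively, $f$ preserves lengths up to level $n$; each child $\tau$ of $f(\nu)$ lies in $\ran(f)$ and its preimage is forced, by monotonicity, injectivity, and the length bookkeeping, to be a child of $\nu$; this pullback is an injective mass-preserving map between two families of positive masses with equal finite totals, hence a bijection.) Thus $f$ is a tree isomorphism, which is what your norm identity needs. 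Without this step neither well-definedness nor the isometry of $T_0$ follows, so the gap is genuine --- though you have put your finger on a real subtlety, since the paper's own proof silently assumes the same fact when it asserts that $\psi=\phi_2\circ f$ is a disintegration.
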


In Section \ref{sec:computable} we complete the proof of Theorem \ref{thm:lifting.comp} by showing that if $f$, $\phi_1$, $\phi_2$ are computable then the lifting of $f$ is computable.

The proof of Theorem \ref{thm:comp.disint.Lp01} is based on the following.

\begin{proposition}\label{prop:iso.int.disint}
Let $\Omega$ be a nonzero non-atomic measure space, and let $\phi$ be a disintegration of $L^p(\Omega)$ so that 
$\norm{\phi(\lambda)}_p = 1$.  Suppose 
$\psi$ is an interval-valued separating antitone map that is isomorphic to $\phi$,  
and suppose $\dom(\psi)$ is a tree.  Then, $\psi$ is a disintegration of $L^p[0,1]$.  
\end{proposition}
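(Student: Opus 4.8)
We have a disintegration $\phi$ of $L^p(\Omega)$ (so $\phi$ is summative, separating, injective, antitone, into $L^p(\Omega) - \{\mathbf{0}\}$, with dense-spanning range) over a tree $S = \dom(\phi)$, normalized so $\norm{\phi(\lambda)}_p = 1$. We have an interval-valued separating antitone map $\psi$ on the same tree (isomorphic to $\phi$ via identity on nodes, since "isomorphic" gives a monotone bijection preserving norms, and both domains are the same tree $S$). We must show $\psi$ is itself a disintegration of $L^p[0,1]$: i.e. $\psi$ is summative, injective, into $L^p[0,1] - \{\mathbf{0}\}$, and its range has dense linear span.

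**Plan.** Let me think about what I need to check against the definition of disintegration. The map $\psi$ is already given to be separating and antitone. I need four more things: (i) $\psi$ takes values in $L^p[0,1] - \{\mathbf{0}\}$; (ii) $\psi$ is injective; (iii) $\psi$ is summative; (iv) the linear span of $\ran(\psi)$ is dense in $L^p[0,1]$.

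**What to prove and in what order.**

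\emph{Nonzero values and injectivity.} Since $f$ (the isomorphism, here the identity on nodes) preserves norms, $\norm{\psi(\nu)}_p = \norm{\phi(\nu)}_p > 0$ for every $\nu$, because $\phi$ maps into $L^p(\Omega) - \{\mathbf{0}\}$. Hence $\psi(\nu) \neq \mathbf{0}$ for all $\nu$, giving (i). For injectivity, $\psi$ is interval-valued and each $\psi(\nu)$ has positive norm, so each $\psi(\nu)$ is the characteristic function of a nondegenerate interval $I_\nu$. Two distinct nodes $\nu, \nu'$ are either incomparable or one is an ancestor of the other; in the incomparable case the supports are disjoint (separating) and both nonzero, so $\psi(\nu) \neq \psi(\nu')$; in the comparable case, say $\nu \subsetneq \nu'$, antitone means $\psi(\nu') \preceq \psi(\nu)$, so $I_{\nu'} \subseteq I_\nu$, and a strict-inclusion argument via the norms (below) forces $\psi(\nu') \neq \psi(\nu)$. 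This gives (ii).

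\emph{Summativity — the crux.} The main obstacle is establishing that $\psi$ is summative, i.e. $\psi(\nu) = \sum_{\nu' \in \nu^+ \cap S} \psi(\nu')$ for each nonterminal $\nu$. The strategy is to transport summativity across the isomorphism using the interval structure together with the norm-preservation and the additivity of $\norm{\cdot}_p^p$ on disjointly supported vectors. Since $\phi$ is summative and $\phi$ is separating, for a nonterminal $\nu$ the children vectors $\{\phi(\nu')\}_{\nu' \in \nu^+}$ are pairwise disjointly supported subvectors of $\phi(\nu)$ summing to $\phi(\nu)$, whence $\norm{\phi(\nu)}_p^p = \sum_{\nu'} \norm{\phi(\nu')}_p^p$. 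Transporting norms through $f$, the same additive relation holds for $\psi$: $\norm{\psi(\nu)}_p^p = \sum_{\nu'} \norm{\psi(\nu')}_p^p$. On the interval side this says $|I_\nu| = \sum_{\nu'} |I_{\nu'}|$ (lengths, up to the $p$-th-power normalization, which is just $\norm{\chi_I}_p^p = |I|$). Because $\psi$ is separating and antitone, the $I_{\nu'}$ are pairwise disjoint subintervals of $I_\nu$ whose total length equals $|I_\nu|$; hence they cover $I_\nu$ up to measure zero, and $\chi_{I_\nu} = \sum_{\nu'} \chi_{I_{\nu'}}$ almost everywhere. This is exactly summativity of $\psi$. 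I would present this carefully: the key inputs are (a) additivity of $\norm{\cdot}_p^p$ over disjoint supports, (b) norm-preservation of the isomorphism, and (c) the measure-covering argument for disjoint subintervals of equal total length. This is the step most likely to hide a subtlety, specifically confirming that disjointness plus matching total length genuinely forces almost-everywhere covering rather than leaving a gap.

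\emph{Dense span.} Finally I would show $\langle \ran(\psi) \rangle = L^p[0,1]$. Here I would use that $\norm{\phi(\lambda)}_p = 1$ forces $\norm{\psi(\lambda)}_p = 1$, so $|I_\lambda| = 1$ and thus $I_\lambda = [0,1]$ (up to null sets); combined with summativity, the intervals $\{I_\nu\}$ form a nested family that repeatedly partitions $[0,1]$. Since $S$ is a tree and $\phi$ is a genuine disintegration, the branching structure subdivides $[0,1]$ into intervals of arbitrarily small length: the $p$-th powers of the child-norms sum to the parent's, and a standard argument (no single branch can retain positive length forever without contradicting that $\phi$ itself spans densely, hence separates points finely) shows $\sup$ of interval lengths along descending chains tends to $0$. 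Consequently the characteristic functions of these dyadic-like subintervals have dense linear span in $L^p[0,1]$ by the standard density of step functions. This yields (iv), completing the verification that $\psi$ is a disintegration of $L^p[0,1]$.
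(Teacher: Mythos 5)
Your handling of the routine clauses is fine and in fact more explicit than the paper's: nonzero values and injectivity from norm preservation, and summativity from the additivity of $\norm{\cdot}_p^p$ over disjoint supports together with the fact that pairwise disjoint subintervals of $I_\nu$ of total length $|I_\nu|$ cover $I_\nu$ up to a null set. The genuine gap is in the dense-span step, which is where all the content of the proposition lives. You assert that ``no single branch can retain positive length forever without contradicting that $\phi$ itself spans densely, hence separates points finely,'' but that is precisely the statement requiring proof, and it is false without non-atomicity --- a hypothesis you restate at the outset and then never use. (If $\Omega$ is a single atom of measure $1$, then $\dom(\phi)=\{\lambda\}$ with $\phi(\lambda)=1$ is a disintegration of $L^p(\Omega)\cong\C$ with $\norm{\phi(\lambda)}_p=1$, yet the isomorphic interval-valued map $\psi(\lambda)=\chi_{[0,1]}$ is not a disintegration of $L^p[0,1]$.) The paper's argument for this step is: if for some $\epsilon>0$ every level contains a node with $\norm{\psi(\nu)}_p\geq\epsilon$, the set of such nodes is an infinite tree which is finitely branching (the children's intervals are disjoint subintervals of the parent's, each of length at least $\epsilon^p$), so by K\"onig's lemma it has an infinite branch $\beta$; by Lemma \ref{lm:descending} the descending subvectors $\phi(f(\beta(n)))$ converge in norm to some $h$ with $\norm{h}_p\geq\epsilon$; every $\phi(\mu)$ is either a supervector of $h$ or disjointly supported from $h$, so $\langle\ran(\phi)\rangle\subseteq\C h\oplus\{g:\supp(g)\cap\supp(h)=\emptyset\}$; and Theorem \ref{thm:atomic} with Proposition \ref{prop:abs.cont} --- this is exactly where non-atomicity enters --- produces a measurable $A$ with $\norm{h\cdot\chi_A}_p=\epsilon/2$, a vector excluded from that closed subspace, contradicting density of the span of $\ran(\phi)$. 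Some version of this argument (and a companion argument ruling out terminal nodes) must appear in your proof; without it the proof does not go through.

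Two smaller points. First, to approximate $\chi_{[a,b]}$ you need a single level $n$ at which \emph{all} intervals are short, so that the level-$n$ intervals meeting $[a,b]$ overhang it by total measure at most $2\epsilon^p$; your statement that lengths tend to $0$ ``along descending chains'' is a per-branch assertion and does not by itself yield a uniform level. The K\"onig's-lemma formulation above delivers exactly this uniformity. Second, the hypotheses do not say that $\dom(\psi)=\dom(\phi)$ or that the isomorphism is the identity on nodes, only that some norm-preserving monotone bijection of the domains exists; this is harmless for the argument but should not be assumed silently, since your transport of summativity tacitly uses that the isomorphism carries the children of $\nu$ onto the children of its image.
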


In Section \ref{sec:computable}, we complete the proof of Theorem \ref{thm:comp.disint.Lp01} by showing that when $\phi$ is computable there \emph{is} a computable interval-valued separating antitone map that is computably isomorphic to $\phi$.

We now proceed with the proofs of Theorem \ref{thm:extension.isomorphism} and Proposition \ref{prop:iso.int.disint}.

\begin{proof}[Proof of Theorem \ref{thm:extension.isomorphism}:]
Suppose $\Omega_j = (X_j, \mathcal{M}_j, \mu_j)$.  We first define a linear map $T$ on the linear span of $\ran(\phi_1)$.  In particular, we let 
\[
T(\sum_{\nu \in F} \alpha_\nu \phi_1(\nu)) = \sum_{\nu \in F} \alpha_\nu \phi_2(f(\nu))
\]
for every finite $F \subseteq \dom(\phi_1)$ and every corresponding family of scalars 
$\{\alpha_\nu\}_{\nu \in F}$.  

We first show that $T$ is well-defined.  Suppose 
\[
g = \sum_{\nu \in F_1} \alpha_\nu \phi_1(\nu) = \sum_{\nu \in F_2} \beta_\nu \phi_1(\nu).
\]
Without loss of generality, we assume $F_1 = F_2 = F$ where $F$ is a finite tree.   
We first make some observations.  Suppose $\phi : F \rightarrow L^p(\Omega)$ is a 
separating antitone map.  Let:
\begin{eqnarray*}
\nabla_\phi(\nu) & = & \phi(\nu) - \sum_{\nu' \in \nu^+ \cap F} \phi(\nu')\\
S_\phi(\nu) & = & \supp(\nabla_\phi(\nu))\\
\end{eqnarray*}
Note that $S_\phi(\nu) = \supp(\phi(\nu)) - \bigcup_{\nu' \in \nu^+ \cap F} \supp(\phi(\nu'))$ and that 
$\supp(\phi(\lambda)) = \bigcup_\nu S_\phi(\nu)$.  Also note that $\nabla_\phi(\nu)$ and $\nabla_\phi(\nu')$ are disjointly supported when $\nu \neq \nu'$.
We claim that if $\gamma_\nu \in \C$ for each $\nu \in F$, then 
\[
\sum_{\nu \in F} \gamma_\nu \phi(\nu) = \sum_{\nu \in F} \left( \sum_{\mu \subseteq \nu} \gamma_\mu \right) \nabla_\phi(\nu).
\]
For, when $\mu \subseteq \nu$, 
\begin{eqnarray*}
\nabla_\phi(\nu) & = & \phi(\nu)\cdot \chi_{S_\phi(\nu)} \\
& = & \phi(\mu) \cdot \chi_{\supp(\phi(\nu))} \chi_{S_\phi(\nu)}\\
& = & \phi(\mu) \chi_{S_\phi(\nu)}
\end{eqnarray*}
And, $\phi(\mu) \cdot \chi_{S_\phi(\nu)} = \mathbf{0}$ if $\mu \not \subseteq \nu$.  So, 
\begin{eqnarray*}
\sum_{\nu \in F} \gamma_\nu \phi(\nu) & = & \sum_{\nu \in F} \left( \sum_{\mu \in F} \gamma_\mu \phi(\mu) \right) \cdot \chi_{S_\phi(\nu)}\\
& = & \sum_{\nu \in F} \left( \sum_{\mu \subseteq \nu} \gamma_\mu \phi(\mu) \cdot \chi_{S_\phi(\nu)} \right) \\
& = & \sum_{\nu \in F} \left(\sum_{\mu \subseteq \nu} \gamma_\mu \right) \nabla_\phi(\nu).
\end{eqnarray*}
Thus, 
\[
g = \sum_{\nu \in F} \left(\sum_{\mu \subseteq \nu} \alpha_\mu\right) \nabla_{\phi_1}(\nu) = 
\sum_{\nu \in F} \left( \sum_{\mu \subseteq \nu} \beta_\mu \right) \nabla_{\phi_1}(\nu).
\]
Since nonzero disjointly supported vectors are linearly independent, it follows that 
\[
\sum_{\mu \subseteq \nu} \alpha_\mu = \sum_{\mu \subseteq \nu} \beta_\mu
\]
whenever $\nabla_{\phi_1}(\nu) \neq \mathbf{0}$.  

Let $\psi = \phi_2 \circ f$.  Since $f$ is an isomorphism, $\psi$ is a disintegration, and 
$\norm{\nabla_\psi(\nu)}_p = \norm{\nabla_{\phi_1}(\nu)}_p$ for all $\nu \in F$.  
Thus, 
\begin{eqnarray*}
\sum_{\nu \in F} \alpha_\nu \psi(\nu) & = & \sum_{\nu \in F} \left( \sum_{\mu \subseteq \nu} \alpha_\mu\right) \nabla_\psi(\nu)\\
& = & \sum_{\nu \in F} \left( \sum_{\mu \subseteq \nu} \beta_\mu\right) \nabla_\psi(\nu)\\
& = & \sum_{\nu \in F} \beta_\nu \psi(\nu).
\end{eqnarray*}
Thus, $T$ is well-defined.

We now also note that 
\begin{eqnarray*}
\norm{f}_p^p & = & \sum_{\nu \in F} \left| \sum_{\mu \subseteq \nu} \alpha_\mu \right|^p \norm{\nabla_{\phi_1}(\nu)}_p^p\\
& = & \sum_{\nu \in F} \left| \sum_{\mu \subseteq \nu} \alpha_\mu \right|^p \norm{\nabla_{\psi}(\nu)}_p^p\\
& = & \norm{T(f)}_p^p.
\end{eqnarray*}

It now follows that $T$ extends to a unique isometric linear map of $L^p(\Omega_1)$ into $L^p(\Omega_2)$; 
denote this map by $T$ as well.  
Since $\ran(\phi_2) \subseteq \ran(T)$, it follows that $T$ is surjective. 

Now, suppose $S$ is an isometric linear map of $L^p(\Omega_1)$ onto $L^p(\Omega_2)$ so that 
$S(\phi_1(\nu)) = \phi_2(f(\nu))$ for all $\nu \in \dom(\phi_1)$.  So, $S(\phi_1(\nu)) = T(\phi_1(\nu))$ for all $\nu \in \dom(\phi_1))$. 
That is, $S(f) = T(f)$ whenever $f \in \ran(\phi_1)$.  Since the linear span of $\ran(\phi_1)$ is dense
in $L^p(\Omega_1)$, it follows that $S = T$.  
\end{proof}

To prove Proposition \ref{prop:iso.int.disint}, we will need the following lemma.

\begin{lemma}\label{lm:descending}
Suppose $\{f_n\}_n$ is a sequence of vectors in $L^p(\Omega)$ so that $f_{n+1} \preceq f_n$ for all $n$.  Then, $\{f_n\}_n$ converges in the $L^p$-norm.
\end{lemma}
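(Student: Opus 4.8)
The plan is to exploit the characterization recorded in Subsection \ref{subsec:banach} that $f \preceq g$ if and only if $f = g \cdot \chi_A$ for some measurable set $A$. First I would note that $\preceq$ is transitive (if $f = g\cdot\chi_A$ and $g = h\cdot\chi_B$ then $f = h\cdot\chi_{A\cap B}$), so that $f_n \preceq f_0$ for every $n$. Writing $S_n = \supp(f_n)$, the subvector relation then forces $f_n = f_0 \cdot \chi_{S_n}$, since $f_n$ agrees with $f_0$ wherever $f_n \neq 0$ and vanishes elsewhere; moreover $f_{n+1} \preceq f_n$ gives $S_{n+1} \subseteq S_n$ (modulo null sets). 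Thus the entire sequence consists of restrictions of the single vector $f_0$ to a nested decreasing family of supports, which is the observation that makes everything tractable.

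Next I would set $S = \bigcap_n S_n$ and take $f = f_0 \cdot \chi_S$ as the candidate limit. For pointwise convergence I would fix $t$ and split into cases: if $t \in S$ then $f_n(t) = f_0(t) = f(t)$ for all $n$; and if $t \notin S$ then $t \notin S_N$ for some $N$, so by the nesting $f_n(t) = 0 = f(t)$ for all $n \geq N$. In either case the sequence $\{f_n(t)\}_n$ is eventually constant, so $f_n \to f$ pointwise almost everywhere, the exceptional null set being the union of the countably many null sets incurred in the support bookkeeping.

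For norm convergence, since $S \subseteq S_n$ we have $f_n - f = f_0 \cdot \chi_{S_n \setminus S}$, whence $\norm{f_n - f}_p^p = \int_{S_n \setminus S} |f_0|^p\, d\mu$. The sets $S_n \setminus S$ decrease to $\emptyset$, and $|f_0|^p$ is integrable because $f_0 \in L^p(\Omega)$, so the dominated convergence theorem gives $\norm{f_n - f}_p \to 0$. The only point that genuinely requires care — and the closest thing to an obstacle — is the measure-theoretic bookkeeping: confirming that the identifications $f_n = f_0 \cdot \chi_{S_n}$ and the nesting $S_{n+1} \subseteq S_n$ really do hold modulo null sets, so that a single integrable dominating function $|f_0|^p$ is legitimately available for the limiting step. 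Once that is settled, both convergence assertions follow at once.
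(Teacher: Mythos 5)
Your proposal is correct and follows essentially the same route as the paper: both identify the candidate limit as $f = f_0 \cdot \chi_S$ with $S = \bigcap_n \supp(f_n)$, obtain pointwise convergence from the nested supports, and reduce norm convergence to the vanishing of $\int_{S_n \setminus S} |f_0|^p\, d\mu$ (the paper phrases this via $\norm{f_n}_p \to \norm{f}_p$ together with the disjointness of $f$ and $f_n - f$, which amounts to the same dominated-convergence step you carry out directly). Your version simply spells out the measure-theoretic bookkeeping that the paper leaves implicit.
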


\begin{proof}
Since $f_{n+1} \preceq f_n$, it follows that $\norm{f_{n+1}}_p \leq \norm{f_n}_p$ and moreover that 
$\norm{f_n - f_m}_p = \norm{f_m}_p - \norm{f_n}_p$ whenever $n \geq m$.  So, on the one hand $\lim_n \norm{f_n}_p$ exists.  On the other hand, this implies that $\{f_n\}_n$ is a Cauchy sequence.  
Thus, $\{f_n\}_n$ converges in the $L^p$-norm.
\end{proof}

\begin{proof}[Proof of Proposition \ref{prop:iso.int.disint}:]
First, we claim that if $\epsilon > 0$, then there exists $n$ so that $\max\{\norm{\psi(\nu)}_p\ :\ |\nu| = n\} < \epsilon$.  For, suppose otherwise.  Let $S = \{\nu \in \dom(\psi)\ :\ \norm{\psi(\nu)}_p \geq \epsilon\}$.  
Thus, since $\psi$ is an antitone map, $S$ is a tree.  
Since $\psi$ is interval-valued, $S$ is a finitely branching tree.  Let $\beta$ be an infinite branch of $S$; that is, $\beta$ is a function from $\N$ into $S$ so that 
$\beta(n+1) \supset \beta(n)$ for all $n \in \N$.     
Let $f$ be an isomorphism of $\psi$ onto $\phi$.  Then, by Lemma \ref{lm:descending}, $\lim_n f(\beta(n))$ exists in the $L^p$-norm; let 
$h$ denote this limit.  Then, $\norm{h}_p \geq \epsilon$ and 
\[
\langle \ran(\phi) \rangle \subseteq \langle h \rangle \oplus \{g \in L^p[0,1]\ :\ \supp(g) \cap \supp(h) = \emptyset\}.
\]  
Since $\Omega$ is non-atomic, by Theorem \ref{thm:atomic} and Proposition \ref{prop:abs.cont}, there is 
a measurable set $A$ so that $\norm{h \cdot \chi_A}_p = \epsilon / 2$.
Therefore, $h \cdot \chi_A \not \in \langle \ran(\phi) \rangle$; a contradiction.

Now, to show that $\psi$ is a disintegration, it suffices to show that $\langle \ran(\psi) \rangle = L^p[0,1]$.  To this end, it suffices to show that $\chi_I \in \langle \ran(\psi) \rangle$ whenever $I$ is a subinterval of $[0,1]$.  Now suppose $[a,b] \subseteq [0,1]$.  Since $\psi$ is interval-valued, for each $\nu$, there is an interval $I(\nu) \subseteq [0,1]$ so that $\psi(\nu) = \chi_{I(\nu)}$.  Choose $\epsilon > 0$ and $n$ so that 
$\norm{\psi(\nu)}_p < \epsilon$ whenever $\nu \in \dom(\phi)$ and $|\nu| = n$.  
Since $\phi$ is a disintegration, and since $\norm{\phi(\lambda)}_p = 1$, it follows that 
$\bigcup_{|\nu| = n} I(\nu) = [0,1]$.  
Let $F = \{\nu \in \dom(\phi)\ :\ |\nu| = n \wedge\ I(\nu) \cap [a,b] \neq \emptyset\}$.  
So, $[a,b] \subseteq \bigcup_{\nu \in F} I(\nu)$.  
Thus, $\mu(\bigcup_{\nu \in F} I(\nu) - [a,b]) < 2\epsilon$, and therefore $\chi_{[a,b]} \in \langle \ran(\psi) \rangle$.
Hence, $\langle \ran(\psi) \rangle = L^p[0,1]$.
\end{proof}

\subsection{Extending partial disintegrations}\label{subsec:extension}

Our goal in this subsection is to prove the following which will support our proof of Theorem \ref{thm:disint.comp}. 

\begin{theorem}\label{thm:extend.partial.disint}
Suppose $\Omega$ is a separable measure space and $1 \leq p < \infty$.  Suppose $\phi$ is a partial disintegration of $L^p(\Omega)$.  Then, for every finite subset $F$ of 
$L^p(\Omega)$ and every nonnegative integer $k$, $\phi$ extends to a partial disintegration $\psi$ so that 
$d(f, \langle \ran(\psi) \rangle) < 2^{-k}$ for every $f \in F$.
\end{theorem}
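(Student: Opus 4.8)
\emph{Proof proposal.} The plan is to build $\psi$ by enlarging the domain orchard $S := \dom(\phi)$ in two independent ways: adjoining fresh root nodes to capture the behaviour of the $f \in F$ \emph{off} the support of $\phi$, and adjoining children to the existing nodes to \emph{refine} $\phi$ on its own support. Write $A = \bigcup_{\nu \in S}\supp(\phi(\nu))$ and, for $\nu \in S$, set $S_\nu = \supp(\phi(\nu)) - \bigcup_{\nu' \in \nu^+ \cap S}\supp(\phi(\nu'))$. Since $\phi$ is separating and antitone, the children of a node have disjoint supports contained in that of the parent, so exactly as in the proof of Theorem \ref{thm:extension.isomorphism} the sets $\{S_\nu\}_{\nu \in S}$ are pairwise disjoint with $A = \bigsqcup_{\nu \in S} S_\nu$. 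Each $f \in F$ then splits into the disjointly supported pieces $f\cdot\chi_{S_\nu}$ ($\nu \in S$) together with $f\cdot\chi_{X - A}$. Because these pieces are disjointly supported, the $p$-th powers of the corresponding errors add; hence, fixing $\delta > 0$ so small that $(\#S+1)^{1/p}\delta < 2^{-k}$, it suffices to approximate each of the $\#S+1$ pieces (of each $f$) to within $\delta$ by a vector of $\langle \ran(\psi)\rangle$.

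The crux is the following claim, which is where the argument does real work, since $\phi(\nu)$ need not be bounded and we are only permitted to multiply it by characteristic functions: for each $\nu \in S$ the closed linear span of $\{\phi(\nu)\cdot\chi_C : C \subseteq S_\nu \text{ measurable}\}$ is all of $\{h \in L^p(\Omega) : \supp(h) \subseteq S_\nu\}$. To prove it, given such an $h$ I divide by $\phi(\nu)$ (nonzero a.e.\ on $S_\nu$) and truncate: on $E_n = \{x \in S_\nu : |\phi(\nu)(x)| \geq 1/n,\ |h(x)| \leq n\}$ the quotient $g = h/\phi(\nu)$ is bounded and $E_n$ has finite measure, while $h\cdot\chi_{E_n} \to h$ in $L^p$ by dominated convergence (as $E_n \uparrow S_\nu$ up to a null set). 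Approximating the bounded, finitely supported function $g\cdot\chi_{E_n}$ uniformly by a simple function $\sum_l c_l \chi_{C_l}$ and multiplying back through by $\phi(\nu)$ yields $\sum_l c_l\,\phi(\nu)\chi_{C_l}$ with $\norm{h - \sum_l c_l\phi(\nu)\chi_{C_l}}_p$ as small as desired, using $\norm{\phi(\nu)(g\chi_{E_n} - \sum_l c_l\chi_{C_l})}_p \leq \norm{g\chi_{E_n} - \sum_l c_l\chi_{C_l}}_\infty\norm{\phi(\nu)}_p$. Applying this estimate to $h = f\cdot\chi_{S_\nu}$ for all $f \in F$ at once and taking the common refinement of the finitely many resulting level sets produces, on each $S_\nu$, a single finite disjoint measurable family $B^\nu_1,\dots,B^\nu_{t_\nu}\subseteq S_\nu$ with every $f\cdot\chi_{S_\nu}$ within $\delta$ of $\mathcal{L}(\{\phi(\nu)\chi_{B^\nu_l}\})$; a parallel (and easier) simple-function approximation of the $f\cdot\chi_{X-A}$ gives a finite disjoint family $D_1,\dots,D_s\subseteq X-A$ of finite-measure sets with every $f\cdot\chi_{X-A}$ within $\delta$ of $\mathcal{L}(\{\chi_{D_j}\})$.

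With these data I define $\psi$ by attaching to each $\nu$ new children $\nu\cat(m_l)$ (using fresh indices) with $\psi(\nu\cat(m_l)) = \phi(\nu)\cdot\chi_{B^\nu_l}$, and attaching to the root finitely many new length-one nodes carrying the $\chi_{D_j}$. Each new child is a subvector of its parent, preserving antitone; the $B^\nu_l \subseteq S_\nu$ are mutually disjoint and disjoint from the supports of the existing children of $\nu$ and of every other branch, while the new roots are supported off $A$, so separating is preserved; and $\dom(\psi)$, being closed under non-root ancestors and not containing $\emptyset$, is again a finite orchard extending $S$. Injectivity requires a little care, since a newly added value could coincide with $\phi(\nu)$ (when a single piece exhausts a terminal $S_\nu$) or be null, but any such redundant or null piece may simply be discarded without altering the span, after which the surviving new vectors have pairwise distinct supports distinct from those already present. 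Finally, for every $f \in F$ each of the $\#S+1$ disjointly supported pieces lies within $\delta$ of $\langle\ran(\psi)\rangle$, so summing the $p$-th powers of the errors gives $d(f,\langle\ran(\psi)\rangle) \leq (\#S+1)^{1/p}\delta < 2^{-k}$. The main obstacle is the unbounded-$\phi(\nu)$ claim of the second paragraph; the remainder is bookkeeping to keep the orchard, separating, antitone, and injectivity conditions intact.
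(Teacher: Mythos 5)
Your proposal is correct, but it takes a genuinely different route from the paper's. Both arguments share the same analytic core --- approximating an arbitrary vector supported on $\supp(\phi(\nu))$ by $\phi(\nu)$ times simple functions, via dividing by $\phi(\nu)$, truncating to sets where it is bounded away from $0$, and using $\norm{\phi(\nu)u}_p \leq \norm{u}_\infty \norm{\phi(\nu)}_p$; this is exactly the paper's Lemma \ref{lm:approx.reciprocal}. The surrounding architecture differs, though. The paper uses separability of $\Omega$ to build a countable \emph{simple lower semilattice} of vectors containing $\ran(\phi)$ whose linear span is dense (Lemmas \ref{lm:semilattice.adjoin} and \ref{lm:density.vectors}), and then invokes a purely order-theoretic extension lemma (Lemma \ref{lm:extend.partial.disintegration}) to realize a sufficiently good finite sub-semilattice as the range of an extension of $\phi$. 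You instead decompose each $f \in F$ along the remnants $S_\nu$ of $\phi$ together with the complement of its total support, approximate each disjointly supported piece in place, and graft the approximating subvectors directly onto the orchard as new children and new roots; your checks that the $S_\nu$ partition $A$, that the orchard/antitone/separating conditions survive, and that null or duplicate new values can be discarded without shrinking the span are all sound, as is the final $(\#S+1)^{1/p}\delta$ error estimate using disjointness of supports. Your route is shorter, avoids the semilattice machinery entirely, and never actually uses separability of $\Omega$, so it proves a slightly stronger statement; the paper's route is more modular, and yields along the way a countable dense simple lower semilattice refining $\ran(\phi)$, which is the packaging its later effective arguments lean on.
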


We divide the majority of the proof of Theorem \ref{thm:extend.partial.disint} into a sequence of lemmas as follows.

\begin{lemma}\label{lm:approx.reciprocal}
Let $\Omega$ be a measure space and suppose $1 \leq p < \infty$.  Let $f \in L^p(\Omega)$ be supported on a set of finite measure.  Then, for every $\epsilon > 0$, there is a simple function $s$ so that 
$\supp(s) \subseteq \supp(f)$ and $\norm{s \cdot f - \chi_{\supp(f)}}_p < \epsilon$.
\end{lemma}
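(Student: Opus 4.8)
The plan is to take $s$ to be a simple-function approximation of the pointwise reciprocal $1/f$, handling separately the two ways in which $1/f$ can fail to be bounded: the region where $|f|$ is small and the region where $|f|$ is large. Write $A = \supp(f)$, so that $\mu(A) < \infty$ and $\chi_A \in L^p(\Omega)$; since $f(t) \neq 0$ for all $t \in A$, the reciprocal $1/f$ is well-defined on $A$.

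First I would discard the region where $|f|$ is small. For $\delta > 0$ set $E_\delta = \{t \in A\ :\ |f(t)| < \delta\}$. As $\delta \downarrow 0$ the sets $E_\delta$ decrease to $\{t \in A\ :\ f(t) = 0\} = \emptyset$, and since $\mu(A) < \infty$, continuity of measure from above gives $\mu(E_\delta) \to 0$. So I fix $\delta$ with $\mu(E_\delta) < \epsilon^p / 2$ and arrange that $s = 0$ on $E_\delta$; there $|sf - \chi_A| = 1$, contributing exactly $\mu(E_\delta)$ to $\norm{sf - \chi_A}_p^p$.

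On $B := A - E_\delta$ the function $1/f$ is measurable with $|1/f| \leq 1/\delta$, hence bounded, so it is a uniform limit of simple functions: covering the closed disk $\overline{D(0, 1/\delta)} \subseteq \C$ by finitely many disjoint Borel cells $B_1, \ldots, B_m$ of diameter $< \eta$, choosing $c_j \in B_j$, and setting $C_j = (1/f)^{-1}[B_j] \cap B$, I obtain the simple function $s = \sum_j c_j \chi_{C_j}$ with $\norm{s - 1/f}_\infty \leq \eta$ on $B$ and $\supp(s) \subseteq B \subseteq A = \supp(f)$. Then on $B$ we have $|sf - 1| = |f|\,|s - 1/f| \leq \eta |f|$, so $\int_B |sf - 1|^p\ d\mu \leq \eta^p \norm{f}_p^p$. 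Choosing $\eta$ with $\eta^p \norm{f}_p^p < \epsilon^p / 2$ and combining the two estimates yields $\norm{sf - \chi_A}_p^p < \epsilon^p$, as desired.

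The only step requiring care is the uniform approximation of $1/f$ on $B$ by a simple function, which is the standard fact that a bounded measurable function is a uniform limit of simple functions, realized here by partitioning its bounded range. No single step is a serious obstacle; the key observation that makes everything fit together is that restricting to $\{|f| \geq \delta\}$ simultaneously bounds $1/f$ (so that it admits a uniform simple approximation) while leaving the resulting error $\eta|f|$ controllable by the finite quantity $\norm{f}_p$.
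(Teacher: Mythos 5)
Your proposal is correct and follows essentially the same route as the paper's proof: discard the small-measure set where $|f|$ is below a threshold (the paper uses $A - A_k$ with $A_k = \{|f| > 2^{-k}\}$, you use $E_\delta$), uniformly approximate the now-bounded function $1/f$ by a simple function on the remainder, and control the error by $\norm{s - 1/f}_\infty \norm{f}_p$. The only differences are cosmetic — you work with $\norm{\cdot}_p^p$ and split $\epsilon^p$ in two, while the paper applies the triangle inequality to the $p$-norm — and your bookkeeping of the constants is, if anything, slightly more careful.
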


\begin{proof}
Suppose $\Omega = (X, \mathcal{M}, \mu)$.  Let $A = \supp(f)$.  Without loss of generality, suppose $\norm{f}_p > 0$.  Let $\epsilon > 0$.  For each nonnegative integer $k$ let 
\[
A_k = \{t \in X\ :\ |f(t)| > 2^{-k}\}.
\]
Since $\mu(A) < \infty$, $\lim_k \mu(A - A_k) = 0$.  Choose $k$ so that $\mu(A - A_k) < \epsilon /2$.  Set $g = (1/f) \cdot \chi_{A_k}$.  Thus, $g \in L^\infty(\Omega)$.  So, there is a simple function $s$ so that $\supp(s) \supseteq A_k$ and $\norm{s - g}_\infty < \frac{1}{2} \epsilon \norm{f}_p$.  Then, 
\begin{eqnarray*}
\norm{s \cdot f - \chi_{A_k}}_p^p & = & \norm{(s - f) \cdot f}_p^p \\
& = & \norm{ |s - g|^p |f|^p }_1\\
& \leq & \norm{ |s - g|^p}_\infty \norm{|f|^p}_1\\
& = & \norm{s - g}_\infty^p \norm{f}_p^p\\
& < & 2^{-p} \epsilon^p
\end{eqnarray*}
So, 
\begin{eqnarray*}
\norm{s \cdot f - \chi_A}_p & \leq & \norm{s \cdot f - \chi_{A_k}}_p + \norm{\chi_{A_k} - \chi_A}_p\\
& < & \epsilon
\end{eqnarray*}
\end{proof}

\begin{lemma}\label{lm:density.vectors}
Suppose $\Omega$ is a measure space and $1 \leq p < \infty$.  Suppose 
$\mathcal{D} \subseteq L^p(\Omega)$ is a simple lower semilattice with the property that the upper semilattice generated by the supports of the vectors in $\mathcal{D}$ is dense in $\Omega$.  Then, the linear span of $\mathcal{D}$ is dense in $L^p(\Omega)$.
\end{lemma}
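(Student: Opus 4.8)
The plan is to show that $V:=\langle\mathcal{D}\rangle$ equals all of $L^p(\Omega)$ by producing, for each finite-measure set $A$, a vector of $V$ that is $L^p$-close to $\chi_A$; since the simple functions supported on finite-measure sets are dense in $L^p(\Omega)$ for $1\leq p<\infty$, this suffices. Writing $\mathcal{E}$ for the (dense) upper semilattice generated by the supports of the vectors in $\mathcal{D}$, and using $\norm{\chi_E-\chi_A}_p=\mu(E\triangle A)^{1/p}$, I can first replace $A$ by a finite union $E=\supp(f_1)\cup\cdots\cup\supp(f_n)$ of supports with $\mu(E\triangle A)$ small. As $V$ is closed, it then suffices to prove $\chi_E\in V$ for every such $E$.

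The structural engine is the separating nature of a simple semilattice of vectors: incomparable elements of $\mathcal{D}$ are disjointly supported, so the family $\{\supp(f):f\in\mathcal{D}\}$ is laminar (any two supports are nested or disjoint). Two facts follow. First, if $h\in\mathcal{D}$ and $\supp(h)\subseteq\supp(f)$, then $h$ and $f$ are not disjointly supported, hence comparable, hence $h\preceq f$ and $h=f\cdot\chi_{\supp(h)}$; thus every element of $\mathcal{D}$ sitting inside $\supp(f)$ is an honest subvector of $f$. Second, for subvectors $g,g'\preceq f$ one has $g\wedge g'=f\cdot\chi_{\supp(g)\cap\supp(g')}$, so meets realize intersections of supports. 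Consequently $V$ contains $f\cdot\chi_B$ for every $B$ in the algebra $\mathcal{A}_f$ generated by $\{\supp(g):g\in\mathcal{D},\ g\preceq f\}$: intersections come from meets, while unions and differences come from inclusion--exclusion, e.g.\ $f\cdot\chi_{\supp(g)\cup\supp(g')}=g+g'-(g\wedge g')$.

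Next I would extract characteristic functions, which is the crux. Localizing the density hypothesis, I claim $\mathcal{A}_f$ is dense in the measure algebra of $\supp(f)$: given $C\subseteq\supp(f)$, choose $E'=\bigcup_j\supp(h_j)\in\mathcal{E}$ with $\mu(E'\triangle C)$ small; by laminarity $E'\cap\supp(f)$ is a finite union of subvector supports, so it lies in $\mathcal{A}_f$ and approximates $C$. Now apply Lemma \ref{lm:approx.reciprocal} to get a simple $s=\sum_i c_i\chi_{A_i}$ with $\supp(s)\subseteq\supp(f)$ and $\norm{s\cdot f-\chi_{\supp(f)}}_p$ small, then replace each $A_i$ by a set $A_i'\in\mathcal{A}_f$ with $\mu(A_i\triangle A_i')$ tiny; since $f\in L^p(\Omega)$, dominated convergence keeps $\sum_i c_i\,f\cdot\chi_{A_i'}$ close to $s\cdot f$, hence close to $\chi_{\supp(f)}$. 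As each $f\cdot\chi_{A_i'}\in V$, this gives $\chi_{\supp(f)}\in V$. The difficulty here---and the main obstacle overall---is that a single non-constant vector tells us nothing about the characteristic function of its support; it is only the abundance of subvectors forced by simplicity, together with the density hypothesis, that makes $\mathcal{A}_f$ rich enough for the reciprocal lemma to bite.

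Finally I would assemble $\chi_E$. By laminarity the maximal supports among $\supp(f_1),\ldots,\supp(f_n)$ are pairwise disjoint and union to $E$, so $\chi_E$ is the sum of the corresponding $\chi_{\supp(f_i)}$, each in $V$ by the previous step; hence $\chi_E\in V$. Feeding this into the reduction of the first paragraph yields $\chi_A\in V$ for every finite-measure $A$, and therefore $V=L^p(\Omega)$, i.e.\ the linear span of $\mathcal{D}$ is dense.
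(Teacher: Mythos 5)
Your proof is correct and follows essentially the same route as the paper's: reduce to approximating $\chi_A$ for finite-measure $A$, pass via simplicity to disjointly supported maximal elements whose supports nearly cover $A$, invoke Lemma \ref{lm:approx.reciprocal}, and then trade the level sets of the resulting simple function for unions of supports from $\mathcal{D}$ using the density hypothesis together with absolute continuity of $\int |f|^p\,d\mu$. The only cosmetic difference is that you run the reciprocal-lemma argument separately on each maximal element and package the final step as membership in an algebra of subvector supports, whereas the paper applies the lemma once to the sum $f=\sum_j f_j$ and finishes with the trichotomy $f_t\cdot\chi_{\supp(g)}\in\{\mathbf{0},f_t,g\}$; both hinge on exactly the same laminarity of supports forced by simplicity.
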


\begin{proof}
Suppose $\Omega = (X, \mathcal{M}, \mu)$.  
It suffices to show that if $\mu(A) < \infty$, then $\chi_A$ is a limit point of the linear span of $\mathcal{D}$ in the $L^p$-norm.  So, suppose $\mu(A) < \infty$.  Choose $f_0, \ldots, f_n \in \mathcal{D}$ so that 
$\mu(A \triangle \bigcup_{j = 0}^n \supp(f_j)) < \epsilon / 3$.  Since $\mathcal{D}$ is simple, we can assume $f_0, f_1, \ldots, f_n$ are disjointly supported.  
Thus, 
\[
\norm{\chi_A - \sum_{j = 0}^n \chi_{\supp(f_j)}}_p < \epsilon / 3.
\]
Set $f = \sum_{j = 0}^n f_j$, and set $B = \bigcup_{j \leq n} \supp(f_j)$.  
Thus, $B = \supp(f)$.  By Lemma \ref{lm:approx.reciprocal}, there is a simple function $s$ so that 
$\norm{sf - \chi_B}_p < \epsilon/3$.  Hence, $\norm{\chi_A - sf}_p < 2\epsilon/3$.  

Let $s = \sum_{j = 0}^k \alpha_j \chi_{A_j}$ where $A_0, \ldots, A_k$ are pairwise disjoint measurable 
subsets of $X$ and $\alpha_0, \ldots, \alpha_k$ are nonzero.  Thus, $\mu(A_j) < \infty$ and 
\[
sf = \sum_{j = 0}^k \alpha_j f \chi_{A_j}.
\]
Set $M = \max\{|\alpha_0|, \ldots, |\alpha_k|\}$.  Choose $\delta > 0$ so that 
\[
\int_E |f|^p d\mu < \left(\frac{\epsilon}{3}\right)^p \frac{1}{(k+1)M}
\]
whenever $E$ is a measurable subset of $X$ so that $\mu(E) < \delta$.  For each $j$, there exist 
$g_{j,0}, \ldots, g_{j, m_j} \in \mathcal{D}$ so that $\mu(A_j \triangle \bigcup_s \supp(g_{j,s}))< \delta$.  
Set $B_{j,s} = \supp(g_{j,s})$ and let $H_j = \bigcup_s B_{j,s}$.  Thus, 
\begin{eqnarray*}
\norm{sf - \sum_j \alpha_jf\chi_{A_j}}_p^p & \leq & \sum_j |\alpha_j| \int_X |f|^p| |\chi_{A_j} - \chi_{H_j}|\ d\mu\\
& = & \sum_j |\alpha_j| \int_{A_j \triangle H_j} |f|^p\ d\mu\\
& \leq & M(k+1) \left(\frac{\epsilon}{3}\right)^p \frac{1}{M(k+1)} = \left(\frac{\epsilon}{3}\right)^p.
\end{eqnarray*}
Thus, $\norm{\chi_A - \sum_j \alpha_j f \chi_{H_j}}_p < \epsilon$.

Now, note that 
\[
f_t \chi_{B_{j,s}} = \left\{
\begin{array}{cc}
0 & \mbox{if $\mu(B_{j,s} \cap \supp(f_t)) = 0$}\\
f_t & \mbox{if $f_t \preceq g_{j,s}$}\\
g_{j,s} & \mbox{if $g_{j,s} \preceq f_t$}
\end{array}
\right.
\]
It follows that $\sum_j \alpha_j f \chi_{H_j}$ belongs to the linear span of $\mathcal{D}$.
\end{proof}

\begin{lemma}\label{lm:semilattice.adjoin}
Suppose $\Omega$ is a measure space and $\mathcal{D}$ is a finite simple lower semilattice
of measurable sets.  Then, for every measurable set $A$ that does not belong to the upper semilattice generated by $\mathcal{D}$, $\mathcal{D}$ properly extends to a finite simple lower semilattice $\mathcal{D}'$ of measurable sets so that $A$ belongs to the upper semilattice generated by $\mathcal{D}'$.  
\end{lemma}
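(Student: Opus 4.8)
The plan is to refine $\mathcal{D}$ to the finite partition it generates and to adjoin to $\mathcal{D}$ exactly the pieces into which this partition cuts $A$. Throughout I will use the structural fact that in a simple lower semilattice of measurable sets any two incomparable members have null intersection; that is, the family is laminar, since the meet of two members is their intersection modulo null sets and simplicity forces this to be $\mathbf{0}$ when the members are incomparable. First I would form the atoms $P_1, \dots, P_m$ of the finite Boolean algebra generated by $\mathcal{D}$, namely the non-null sets obtained by intersecting, over all $S \in \mathcal{D}$, a choice of $S$ or its complement $X \setminus S$. These partition $X$ modulo null sets, every member of $\mathcal{D}$ is (modulo null) a union of atoms, and consequently each atom is, modulo null, either contained in or disjoint from each member of $\mathcal{D}$.

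Next I would set
\[
\mathcal{D}' = \mathcal{D} \cup \{\, A \cap P_i\ :\ 1 \leq i \leq m,\ \mu(A \cap P_i) > 0\,\}.
\]
Since $A = \bigcup_i (A \cap P_i)$ modulo a null set, the non-null pieces $A \cap P_i$ exhibit $A$ as a finite union of members of $\mathcal{D}'$, so $A$ lies in the upper semilattice generated by $\mathcal{D}'$. The extension is proper as a set inclusion: were every non-null $A \cap P_i$ already in $\mathcal{D}$, then $A$ itself would be a finite union of members of $\mathcal{D}$ and hence already in the upper semilattice generated by $\mathcal{D}$, against the hypothesis. Finiteness is clear.

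It remains to verify that $\mathcal{D}'$ is a simple lower semilattice and a \emph{proper} extension of $\mathcal{D}$, and this is where the atom refinement pays off. The two controlling observations are that the new sets are pairwise disjoint (distinct atoms) and that each new set $A \cap P_i$ is contained in the single atom $P_i$, hence is contained in or disjoint from every $S \in \mathcal{D}$. From these, intersections of members of $\mathcal{D}'$ stay in $\mathcal{D}'$ (old--old by laminarity; anything involving a new set is that new set or $\mathbf{0} = \emptyset$), so $\mathcal{D}'$ is a lower semilattice, and every incomparable pair intersects in $\mathbf{0}$, so it is simple. For properness, any nonzero $v \in \mathcal{D}$ with $v \subseteq A \cap P_i$ would lie in the single atom $P_i$; but a union of atoms inside one atom is null or equals $P_i$, so $v$ cannot be a nonzero proper subset of $A \cap P_i$, and no new element strictly dominates a nonzero member of $\mathcal{D}$.

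The step I expect to be the real obstacle is preserving \emph{simplicity} under the adjunction. The danger is that a newly added piece could sit inside the intersection of two incomparable old members and thereby become a common lower bound for them, raising their meet above $\mathbf{0}$. This is precisely what laminarity rules out: incomparable members of $\mathcal{D}$ have null intersection, so they share no atom, and hence no piece $A \cap P_i$ can lie below both. Recognizing that one must refine to the atoms of the generated algebra (rather than work with the order-theoretic ``children'' of the semilattice) and that laminarity is exactly the property keeping the refinement safe is the crux; the remaining verifications are routine.
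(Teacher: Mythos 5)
Your proposal is correct and takes essentially the same route as the paper: the paper adjoins $A\cap R$ for each \emph{remnant} $R = Y - \bigcup\{Z \in \mathcal{D}\ :\ Z \subset Y\}$ of a member $Y$ of $\mathcal{D}$, together with $A - \bigcup\mathcal{D}$, and under laminarity these remnants plus the outside piece are exactly the non-null atoms of the Boolean algebra you refine to, so the two constructions produce the same $\mathcal{D}'$ and require the same verifications. The one caveat, shared with the paper's own argument, is that your step ``simplicity forces incomparable members to have null intersection'' presumes the meet is the set-theoretic intersection, which is not literally forced by the order-theoretic definition of meet but does hold for every semilattice to which the lemma is actually applied.
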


\begin{proof}
When $Y \in \mathcal{D}$, define the \emph{remnant} of $Y$ to be 
\[
Y - \bigcup\{Z\ :\ Z \in \mathcal{D}\ \wedge\ Z \subset Y\}.
\]
Let $\mathcal{R}$ denote the set of all remnants of sets in $\mathcal{D}$.  Note that any two distinct sets in 
$\mathcal{R}$ are disjoint.  Let:
\begin{eqnarray*}
\mathcal{R}' & = & \{R \cap A\ :\ R \in \mathcal{R}\}\\
S_A & = & A - \bigcup \mathcal{D}\\
\mathcal{D}' & = & \mathcal{D} \cup \mathcal{R}' \cup \{S_A\}.
\end{eqnarray*}
We claim that $\mathcal{D}'$ is a simple lower semilattice.  For, suppose $X_1, X_2 \in \mathcal{D}'$ are incomparable.  We can suppose one of $X_1$, $X_2$ does not belong to $\mathcal{D}$.  We can also assume one of $X_1$, $X_2$ does not belong to $\mathcal{R}'$.  If $X_1$ or $X_2$ is $S_A$, then $X_1 \cap X_2 = \emptyset$.  So, we can assume $X_1 \in \mathcal{D}$ and $X_2 \in \mathcal{R}'$.  Thus, there exists 
a remnant $R$ of a set $Y \in \mathcal{D}$ so that $X_2 = R \cap A$.  Thus, $R \subseteq Y$.  So, 
$Y \not \subseteq X_1$.  If $X_1 \cap Y$ is null, then so is $X_1 \cap X_2$.  So, suppose $X_1 \subset X$.  
Then, $R \cap X_1 = \emptyset$, so $X_2 \cap X_1 = \emptyset$.

We now note that $\bigcup \mathcal{R} = \bigcup \mathcal{D}$.  Thus, $A = S_A \cup \bigcup \mathcal{R}'$, and so $A$ belongs to the upper semilattice generated by $\mathcal{D}'$.  Thus, $\mathcal{D} \subset \mathcal{D}'$.

We now show that $\mathcal{D}'$ properly extends $\mathcal{D}$.  Suppose $B \in \mathcal{D}' - \mathcal{D}$ and suppose $C$ is a nonzero set in $\mathcal{D}$.   If $B = S_A$, then 
$B \cap C = \emptyset$ and so $B \not \supseteq C$.  Suppose $B = R \cap A$ where $R$ is the remnant of $Y \in \mathcal{D}$.  By way of contradiction, suppose $B \supset C$.  Then, $Y \supset C$, and so $R \cap C = \emptyset$ which is impossible since $C$ is nonempty.  Thus, $\mathcal{D}'$ properly extends $\mathcal{D}$.
\end{proof}

\begin{lemma}\label{lm:extend.partial.disintegration}
Suppose $\phi$ is a partial disintegration of $L^p(\Omega)$, and suppose $\mathcal{D} \subseteq L^p(\Omega)$ is a finite simple lower semilattice of  vectors in $L^p(\Omega)$ that properly extends $\ran(\phi)$.  Then, 
$\phi$ extends to a partial disintegration $\psi$ of $L^p(\Omega)$ with range $\mathcal{D} - \{\mathbf{0}\}$.
\end{lemma}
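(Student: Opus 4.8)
The plan is to exploit the remark recorded just after the definition of an orchard---that a finite simple lower semilattice, with its bottom element deleted and its order reversed, is isomorphic to an orchard---and to fit that isomorphism together with $\phi$ itself. First I would observe that $\phi$ is already an order-isomorphism of $(\dom(\phi), \subseteq)$ onto $(\ran(\phi), \succeq)$. Indeed $\phi$ is injective and antitone, so only the converse direction needs checking: if $\phi(\nu') \preceq \phi(\nu)$ with $\nu \neq \nu'$, then $\nu,\nu'$ cannot be incomparable, since the separating property would then make $\phi(\nu),\phi(\nu')$ disjointly supported, contradicting that $\phi(\nu')$ is a nonzero subvector of $\phi(\nu)$; and $\nu' \subsetneq \nu$ is impossible since it would force $\phi(\nu) = \phi(\nu')$ against injectivity, so $\nu \subseteq \nu'$. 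I would also record that $\Lambda := \ran(\phi) \cup \{\mathbf{0}\}$ is a finite simple lower semilattice and that, by hypothesis, $\mathcal{D}$ properly extends $\Lambda$.

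Next I would build the orchard. Applying the cited remark to $\mathcal{D}$, there is an order-isomorphism $\rho$ of $(\mathcal{D} - \{\mathbf{0}\}, \succeq)$ onto a finite orchard $S'$ ordered by $\subseteq$. The crucial point is to arrange that $\rho$ extends $\phi^{-1}$, i.e. that $\rho(\phi(\nu)) = \nu$ for all $\nu \in \dom(\phi)$, so that $\dom(\phi) \subseteq S'$ with the inherited ancestor structure. This is exactly where the proper-extension hypothesis is used: since no nonzero element of $\Lambda$ is a proper subvector of any element of $\mathcal{D} - \Lambda$, no new vector lies strictly below an old nonzero vector in $\preceq$; translating through the correspondence $\succeq \leftrightarrow \subseteq$, no node carrying a new vector is a proper ancestor of a node carrying an old vector. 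Hence the old vectors retain precisely their mutual ancestor relations inside $S'$, the new nodes being adjoined only as descendants or in fresh trees, and the copy of $(\ran(\phi),\succeq)$ inside $S'$ is order-isomorphic to $(\dom(\phi), \subseteq)$ in a way that may be taken to coincide with $\phi^{-1}$. I would then set $\psi = \rho^{-1} : S' \to \mathcal{D} - \{\mathbf{0}\}$.

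Finally I would verify that $\psi$ is a partial disintegration extending $\phi$. From $\rho(\phi(\nu)) = \nu$ we obtain $\psi(\nu) = \phi(\nu)$ for every $\nu \in \dom(\phi)$, so $\psi$ extends $\phi$; and $\ran(\psi) = \mathcal{D} - \{\mathbf{0}\}$ by construction, each element of which is nonzero. Being the inverse of an order-isomorphism between $(S',\subseteq)$ and $(\mathcal{D} - \{\mathbf{0}\}, \succeq)$, $\psi$ is injective and antitone. For the separating property, incomparable nodes of $S'$ correspond under $\rho$ to incomparable elements of $\mathcal{D}$, whose meet is $\mathbf{0}$ by simplicity; as in the set case this forces their supports to intersect in a null set, so they are disjointly supported. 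Thus $\psi$ is a separating, injective, antitone map of the finite orchard $S'$ into $L^p(\Omega) - \{\mathbf{0}\}$, i.e. a partial disintegration, with range $\mathcal{D} - \{\mathbf{0}\}$.

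I expect the main obstacle to be the second step: producing an orchard-isomorphism $\rho$ that \emph{literally} extends $\phi^{-1}$, rather than merely some abstractly isomorphic orchard. The content there is purely order-theoretic---one must argue from proper extension that adjoining the new vectors never inserts a node strictly above an existing node, so that the orchard $\dom(\phi)$ embeds as a genuine substructure of the orchard canonically attached to $\mathcal{D}$, and the extension $\psi$ is then forced.
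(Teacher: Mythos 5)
Your proof is correct and lands where the paper does, but by a more global route. The paper argues by induction on $\#(\mathcal{D}-\ran(\phi))$, adjoining one new vector $f$ at a time and splitting into two cases according to whether $f$ is incomparable with everything in the current range or is a subvector of some (by simplicity, a unique $\preceq$-minimal) $g$ in it; in each case an explicit fresh node is adjoined --- a new length-one node in the first case, a new child of $\phi^{-1}(g)$ in the second. You instead package everything into a single order-theoretic step: the forest attached to $(\mathcal{D}-\{\mathbf{0}\},\succeq)$ restricts, thanks to the proper-extension hypothesis, to exactly the forest already realized by $\dom(\phi)$, so the orchard presentation of $\mathcal{D}-\{\mathbf{0}\}$ can be chosen to extend $\phi^{-1}$, and $\psi=\rho^{-1}$ is then forced. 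The hypothesis does the same work in both arguments --- it is precisely what forbids a new vector from sitting strictly above an old nonzero one, i.e.\ a new node from becoming a proper ancestor of an old node. Your version has the mild advantage of handling chains of new vectors lying below other new vectors in one stroke, whereas the element-by-element induction must tacitly adjoin the new vectors in a $\preceq$-decreasing order so that the proper-extension condition persists at each intermediate stage. One shared caveat: in verifying the separating property you pass from ``incomparable elements of $\mathcal{D}$ have meet $\mathbf{0}$'' to ``incomparable elements of $\mathcal{D}$ are disjointly supported.'' For arbitrary vectors a trivial meet does not force disjoint supports, but the paper makes the identical inference (here and in Lemma \ref{lm:density.vectors}), and it does hold for the semilattices to which the lemma is actually applied, so you are on the same footing as the paper on this point.
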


\begin{proof}
Let $S = \dom(\phi)$.  By induction, we can assume $\#(\mathcal{D} - \ran\phi)=1$.  Suppose $f$ is the unique element of $\mathcal{D}- \ran(\phi)$. Since $g\not\preceq f$ for all $g\in\ran\phi$, precisely two cases arise. The first is $f\not\preceq g$ for all $g\in \ran\phi$. For this case, we let 
\[
t=\max\{z\in\N: (z) \in S\},
\]
set $S^\prime=S\cup\{(t+1)\}$, and define $\psi:S^\prime \to L^p(\Omega)$ by 
\[
\psi(\nu) = \left\{\begin{array}{cc}
			f & \nu = (t+1)\\
			\phi(\nu) & \nu \neq (t+1).
			\end{array}
			\right.
\]
By choice of $t$ and the incomparability of $(t+1)$ with each element of $S$, $\psi$ is an injective antitone map. That $\psi$ is separating follows from the incomparability of $f$ with any element of $\ran\phi$. Furthermore, $S^\prime \cup \{\emptyset\}$ is a finite subtree of $\N^*$, so $\psi$ is a partial disintegration onto $\mathcal{D}$ that extends $\phi$. 
	
The other case is that there exists $g\in \ran(\phi)$ so that $f\preceq g$.  Since $S$ is finite, there is a $\preceq$-minimal vector $g \in \ran(\phi)$ so that $f \preceq g$.  Since $\ran(\phi)$ is simple, and since $f$ is nonzero, $g$ is unique.  
Note that $f$ is incomparable with every element $h$ of $\ran(\phi)$ so that $g \not \preceq h$.  
We let: 
\begin{eqnarray*}
t & = & \max\{z\in\N\ :\ \phi^{-1}(g)^\frown(z) \in S\} \\
S^\prime & = & S\cup\{\phi^{-1}(g)^\frown(t+1)\} \\
\end{eqnarray*}
For all $\nu \in S'$, let 
\[
\psi(\nu) = \left\{\begin{array}{cc}
			f & \nu \in S' - S\\
			\phi(\nu) & \nu \in S
			\end{array}
			\right.
\]
By our choice of $g$ and $t$, $\psi$ is an injective antitone map. That $\psi$ is separating follows from the incomparability of $\phi^{-1}(g)^\frown(t+1)$ with every element $\mu$ of $S$ so that $\mu \not \subseteq \phi^{-1}(g)$.  The set $S^\prime$ is also a finite orchard, so $\psi$ is a partial disintegration onto $\mathcal{D}$ which extends $\phi$.
\end{proof}

\begin{proof}[Proof of Theorem \ref{thm:extend.partial.disint}:]
Let $\mathcal{R}  = \{R_0, R_1, \ldots\}$ be a countable dense set of measurable sets.  We build a set $\mathcal{D} \supseteq \ran(\phi)$ that satisfies 
the hypotheses of Lemma \ref{lm:density.vectors}.  To ensure this, we ensure that each set in $\mathcal{R}$ belongs to the upper semilattice generated by the supports of the vectors in $\mathcal{D}$.  
We construct $\mathcal{D}$ by defining a sequence $\mathcal{D}_0 \subseteq \mathcal{D}_1 \subseteq \ldots$ 
and setting $\mathcal{D} = \bigcup_n \mathcal{D}_n$.  To begin, set $\mathcal{D}_0 = \ran(\phi) \cup \{\mathbf{0}\}$.  

Let $n \in \N$, and suppose $\mathcal{D}_n$ has been defined.  Let $\mathcal{F} = \{\supp(f)\ :\ f \in \mathcal{D}_n\}$.   By way of induction, suppose $\mathcal{D}_n$ is a simple lower semilattice.  Thus, $\mathcal{F}$ is a simple lower semilattice of measurable sets.  By Lemma \ref{lm:semilattice.adjoin}, 
there is a simple lower semilattice $\mathcal{F}'$ so that $R_n$ belongs to the upper semilattice generated by 
$\mathcal{F}'$.  Let $h_1 = \bigvee \mathcal{D}_n$.  Let $h_2 = \chi_{R_n - \supp(h_1)}$.  Let 
$\mathcal{D}_{n+1} = \{(h_1 + h_2) \cdot \chi_S\ :\ S \in \mathcal{F}'\}$.  Thus, since $\mathcal{F}'$ is a simple lower semilattice, $\mathcal{D}_{n+1}$ is a simple
lower semilattice  under $\preceq$.  We claim that $\mathcal{D}_n \subseteq \mathcal{D}_{n+1}$.  
For, let $f \in \mathcal{D}_n$.  Thus, $S := \supp(f) \in \mathcal{F}$.  
So, $(h_1 + h_2)\cdot \chi_S \in \mathcal{D}_{n+1}$.  But, $(h_1 + h_2) \cdot \chi_S = h_1 \cdot \chi_S = f$.  

So, it follows from Lemma \ref{lm:density.vectors} that the linear span of $\mathcal{D}$ is dense in $L^p(\Omega)$.  So, there exists 
a finite $S\subseteq \mathcal{D} - \{\mathbf{0}\}$ so that $d(f, \langle S\rangle) < 2^{-k}$.  
We can assume $\ran(\phi) \subseteq S$.  We can now apply Lemma \ref{lm:extend.partial.disintegration}.
\end{proof}

\subsection{Approximating separating antitone maps}\label{subsec:approx}

We show that the $\sigma$ functional defined in Section \ref{sec:background} can be used to estimate distance to the nearest separating antitone map.

\begin{theorem}\label{thm:sigma.estimate}
Suppose $\Omega$ is a measure space and $p$ is a real so that 
$p \geq 1$ and $p \neq 2$.  Suppose $\phi : S \rightarrow L^p(\Omega)$ is a partial disintegration of $L^p(\Omega)$, and $\psi : S' \rightarrow L^p(\Omega)$ where $S' \supseteq S$ is a finite orchard so that each $\nu \in S' - S$ is a descendant of a node in $S$.  Then, there is a separating antitone map $\psi' : S' \rightarrow L^p(\Omega)$ so that 
\begin{equation}
\norm{\psi' - \psi}_{S'}^p  \leq \norm{\phi - \psi |_S}^p_S + 2^p\sigma(\phi \cup (\psi|_{S' -  S})). \label{eqn:sigma.estimate}
\end{equation}
\end{theorem}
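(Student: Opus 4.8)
The plan is to build $\psi'$ one value per node, keeping $\psi'$ equal to $\phi$ on $S$ and correcting $\psi$ only on $S'-S$, where I write $\theta:=\phi\cup(\psi|_{S'-S})$ for the combined map appearing in the statement. Since $\norm{\cdot}_{S'}$ is a maximum of $L^p$-norms, it suffices to bound $\norm{\psi'(\nu)-\psi(\nu)}_p^p$ for each $\nu\in S'$ separately and then invoke $\max\{a,b\}\le a+b$. For $\nu\in S$ I simply set $\psi'(\nu)=\phi(\nu)$, so that $\norm{\psi'(\nu)-\psi(\nu)}_p=\norm{\phi(\nu)-\psi(\nu)}_p\le\norm{\phi-\psi|_S}_S$, which accounts for the first summand on the right of (\ref{eqn:sigma.estimate}). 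The whole problem then reduces to constructing the values of $\psi'$ on $S'-S$ so that $\psi'$ is separating and antitone and $\norm{\psi'(\nu)-\theta(\nu)}_p^p\le 2^p\sigma(\theta)$ for $\nu\in S'-S$ (noting $\theta(\nu)=\psi(\nu)$ there); the maximum of the two bounds is then at most their sum, which is exactly the right-hand side of (\ref{eqn:sigma.estimate}).

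To obtain the separating and antitone properties for free, I would force every value of $\psi'$ to be a support-restriction of the root value of its tree. Each tree of the orchard $S'$ has its root $\rho\in S$ (using that $S$ is a finite orchard and every node of $S'-S$ descends from a node of $S$), and I set $\psi'(\nu)=\phi(\rho)\cdot\chi_{B_\nu}$ for a family of measurable sets $B_\nu\subseteq\supp(\phi(\rho))$ that is nested along branches ($B_{\nu'}\subseteq B_\nu$ when $\nu\subseteq\nu'$) and has disjoint $B_\nu$ for incomparable $\nu$. Any such family yields a separating antitone map, and the requirement $B_\nu=\supp(\phi(\nu))$ for $\nu\in S$ is consistent because $\phi$ is itself separating and antitone. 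The sets $B_\nu$ for $\nu\in S'-S$ are chosen pointwise: for $\mu$-a.e.\ $t$ one selects a single downward branch $\pi_t$ of $S'$, forced on its $S$-part to consist of those $S$-nodes whose support contains $t$, and then extended greedily into $S'-S$ by always passing to the child $\nu$ of largest $|\theta(\nu)(t)|$ as long as this quantity exceeds $|\phi(\rho)(t)-\theta(\nu)(t)|$. Declaring $t\in B_\nu$ iff $\nu\in\pi_t$ gives nested, sibling-disjoint supports automatically.

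For the estimate I would argue pointwise and integrate, using the representation $\sigma(f,g)=\int_X\sigma(f(t),g(t))\,d\mu(t)$ together with Theorem \ref{thm:lamperti}(\ref{thm:lamperti::itm:ineq}), i.e.\ $\min\{|z|^p,|w|^p\}\le\sigma(z,w)$. Fix $\nu\in S'-S$. Then $\norm{\psi'(\nu)-\theta(\nu)}_p^p$ is the integral of $|\phi(\rho)(t)-\theta(\nu)(t)|^p$ over $\{t:\nu\in\pi_t\}$ plus the integral of $|\theta(\nu)(t)|^p$ over $\{t:\nu\notin\pi_t\}$, and I would bound it by two contributions. The antitone contribution compares the branch value $\phi(\rho)(t)$ to $\theta(\nu)(t)$ and is charged against the term $\sigma(\theta(\nu)-\theta(\rho),\theta(\nu))$ of the second sum of $\sigma(\theta)$ via $\min\{|\phi(\rho)-\theta(\nu)|^p,|\theta(\nu)|^p\}\le\sigma(\theta(\nu)-\theta(\rho),\theta(\nu))$ (here $\sigma$ is even in its first argument). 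The separating contribution accounts for the mass of $\theta(\nu)$ on points $t$ that the greedy rule awarded to a competing incomparable node $\nu''$, where by the selection rule $|\theta(\nu'')(t)|\ge|\theta(\nu)(t)|$, so $|\theta(\nu)(t)|^p=\min\{|\theta(\nu'')(t)|^p,|\theta(\nu)(t)|^p\}$ is charged against the term $\sigma(\theta(\nu),\theta(\nu''))$ of the first sum. Each contribution is an integral of a quantity dominated pointwise by a $\sigma$-integrand, hence is at most $\sigma(\theta)$; writing the total error as the $p$-th power of the sum of the two corresponding $L^p$-norms and applying the triangle inequality produces the factor $2^p$, giving $\norm{\psi'(\nu)-\theta(\nu)}_p^p\le 2^p\sigma(\theta)$.

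The main obstacle is the multilevel bookkeeping: when $\nu$ lies several levels below $S$, the greedy branch need not agree with the branch that would be optimal for $\nu$ in isolation, and one must keep the discrepancies from compounding across levels. The structural fact that controls this is that $\sigma(\theta)$ ranges over \emph{all} incomparable pairs and \emph{all} ancestor--descendant pairs, not merely parent--child ones; thus the deviation of $\psi'(\nu)$ from $\theta(\nu)$ can be charged directly against $\nu$'s own terms $\sigma(\theta(\nu)-\theta(\rho),\theta(\nu))$ and $\sigma(\theta(\nu),\theta(\nu''))$ rather than accumulated along the path, so that a single triangle inequality—splitting each point's error into an antitone part and a separating part—suffices and the loss never exceeds the factor $2^p$. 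That $\phi$ is a genuine partial disintegration on $S$ is used to anchor the construction, so that the $S$-portion of every branch is determined exactly and needs no correction.
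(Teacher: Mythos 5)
Your overall strategy is essentially the paper's: keep $\phi$ on $S$, replace $\psi(\nu)$ for $\nu\in S'-S$ by a restriction of an $S$-ancestor's $\phi$-value to a set carved out by pointwise comparisons against the integrand of $\sigma$, and conclude by integrating the pointwise inequality $\min\{|z|^p,|w|^p\}\le\sigma(z,w)$ of Theorem \ref{thm:lamperti}(\ref{thm:lamperti::itm:ineq}). (The paper uses the deepest $S$-ancestor rather than the root, and excludes a point $t$ from the support of $\psi'(\nu)$ whenever \emph{some} ancestor $\mu\subseteq\nu$ in $S'-S$ satisfies $|\theta(\mu)(t)|^p\le\hat{\sigma}(t)$, where $\hat{\sigma}(t)$ denotes the sum of the pointwise minima $\min\{|\theta(\nu)(t)|^p,|\theta(\nu')(t)|^p\}$ over incomparable pairs and $\min\{|\theta(\nu')(t)-\theta(\nu)(t)|^p,|\theta(\nu')(t)|^p\}$ over ancestor--descendant pairs; your greedy branch is a variant of this.) However, your key estimate has a genuine gap in the ``separating contribution.''

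When $t$ is excluded from $B_\nu$ because the greedy rule preferred an incomparable node $\nu''$, the selection rule compares $\nu''$ with the \emph{ancestor} $\mu_j$ of $\nu$ at $\nu''$'s level, so it gives $|\theta(\nu'')(t)|\ge|\theta(\mu_j)(t)|$ --- not $|\theta(\nu'')(t)|\ge|\theta(\nu)(t)|$ as you assert. Since $\theta=\psi$ on $S'-S$ is an arbitrary map with no antitonicity, $|\theta(\nu)(t)|$ can exceed both, and then $\min\{|\theta(\nu)(t)|^p,|\theta(\nu'')(t)|^p\}=|\theta(\nu'')(t)|^p$ does not control the error $|\theta(\nu)(t)|^p$; the same issue arises when the branch simply terminates at a proper ancestor of $\nu$. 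The repair --- which is exactly the paper's move --- is to route through $\mu_j$: the incomparable pair $(\mu_j,\nu'')$ (or the termination condition) gives $|\theta(\mu_j)(t)|^p\le\hat{\sigma}(t)$, the ancestor--descendant pair $(\mu_j,\nu)$ gives either $|\theta(\nu)(t)|^p\le\hat{\sigma}(t)$ outright or $|\theta(\nu)(t)-\theta(\mu_j)(t)|^p\le\hat{\sigma}(t)$, and in the latter case $|\theta(\nu)(t)|^p\le 2^{p-1}\bigl(|\theta(\mu_j)(t)|^p+|\theta(\nu)(t)-\theta(\mu_j)(t)|^p\bigr)\le 2^p\hat{\sigma}(t)$. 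This is where the factor $2^p$ is actually spent. Your accounting reserves the $2^p$ for an outer triangle inequality between the antitone and separating contributions, which leaves the separating contribution bounded only by $2^p\sigma(\theta)$ rather than $\sigma(\theta)$, so the ledger does not close as written. No outer triangle inequality is needed anyway: the two contributions live on the disjoint sets $B_\nu$ and $X-B_\nu$, so the pointwise bound $|\psi'(\nu)(t)-\theta(\nu)(t)|^p\le 2^p\hat{\sigma}(t)$ integrates directly to the bound $2^p\sigma(\theta)$ required by (\ref{eqn:sigma.estimate}).
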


\begin{proof}
Let $\Delta =  S' - S$.  Let $\psi_0 = \phi \cup \psi|_\Delta$. 
Let
\[
\hat{\sigma}(\psi_0)=\sum_{\nu|\nu^\prime}\min\{|\psi_0(v)|^p,|\psi_0(\nu^\prime)|^p\}
+\sum_{\nu^\prime\supset \nu}\min\{|\psi_0(\nu^\prime)-\psi_0(\nu)|^p,|\psi_0(\nu^\prime)|^p\} 
\]
where $\nu,\nu^\prime$ range over $S'$. 

When $\nu \in \Delta$, define the \emph{nullifiable set of $\nu$} to be the set of all $t \in X$ so that 
$|\psi(\mu)(t)|^p \leq \hat{\sigma}(\psi_0)(t)$ for some $\mu \subseteq \nu$ that belongs to $\Delta$.  
Denote the nullifiable set of $\nu$ by $N_\nu$.  When $\nu \in \Delta$, define the \emph{source node of $\nu$} 
to be the maximal $\mu \subseteq \nu$ so that $\mu \in S$.  Thus, each $\nu \in \Delta$ has a source node.  

Let $\nu \in S'$.  If $\nu \in S$, then define $\psi'(\nu)$ to be $\phi(\nu)$.  If $\nu \in \Delta$, then define 
$\psi'(\nu)$ to be $\phi(\mu)\cdot(1 - \chi_{N_\nu})$ where $\mu$ is the source node of $\nu$.  

Note that $N_\nu \subseteq N_{\nu'}$ if $\nu, \nu' \in \Delta$ and $\nu \subseteq \nu'$.  Thus, 
$\psi'$ is antitone.  

Suppose $\nu, \nu' \in \Delta$ are incomparable.  Suppose $t \not \in N_\nu$.  Then $|\psi(\nu)(t)|^p > \hat{\sigma}(\psi_0)(t)$.  So, $|\psi(\nu)(t)|^p > \hat{\sigma}(\psi_0)(t)$.  
Thus, 
\[
|\psi(\nu')(t)|^p = \min\{|\psi(\nu)(t)|^p, |\psi(\nu')(t)|^p\} \leq \hat{\sigma}(\psi_0)(t).
\]  
Hence, 
$t \in N_{\nu'}$.  Thus, $1 - \chi_{N_\nu}$ and $1 - \chi_{N_{\nu'}}$ are disjointly supported.  

So, suppose $\nu, \nu' \in S'$ are incomparable.  If either $\nu, \nu' \in S$ or if $\nu, \nu' \in \Delta$, then 
$\psi'(\nu)$ and $\psi'(\nu')$ are incomparable.  Suppose $\nu \in S$ and $\nu' \in \Delta$.  Let $\mu$
denote the source node of $\nu'$.  Then, $\mu \not \subset \nu$ and $\nu \not \subseteq \mu$. 
Thus, $\mu$, $\nu$ are incomparable and so $\psi'(\mu)$ and $\psi'(\nu)$ are disjointly supported.  
Thus, $\psi'(\nu')$ and $\psi'(\nu)$ are disjointly supported.

Now, note that 
\[
\norm{\psi - \psi'}_{S'}^p \leq \norm{\phi - \psi|_S}_S^p + \norm{(\psi - \psi')|_\Delta}_\Delta^p.
\]
Suppose $\nu \in \Delta$ and $t \in X$.  We claim that 
$|\psi(\nu)(t) - \psi(\nu')(t)|^p \leq 2^p \hat{\sigma}(\psi_0)(t)$.  For, suppose $t \not \in N_\nu$.  
Then, $\psi'(\nu)(t) = \phi(\mu)(t)$ where $\mu$ is the source node of $\nu$.  Also, 
$|\psi(\nu)(t)|^p > \hat{\sigma}(\psi_0)(t)$.  So, 
$|\psi(\nu)(t)|^p > \min\{| \phi(\mu)(t) - \psi(\nu)(t)|^p, |\psi(\nu)(t)|^p\}$. 
Thus, 
\[
|\phi(\mu)(t) - \psi(\nu)(t)|^p = \min\{| \phi(\mu)(t) - \psi(\nu)(t)|^p, |\psi(\nu)(t)|^p\} \leq \hat{\sigma}(\psi_0)(t) \leq 2^p \hat{\sigma}(\psi_0)(t).
\]
Suppose $t \in N_\nu$.  Then, $\psi'(\nu)(t) = 0$.  There exists $\mu' \subseteq \nu$ so that 
$|\psi'(\mu)(t)|^p \leq \hat{\sigma}(\psi_0)(t)$.  Without loss of generality, suppose 
$|\psi(\nu)(t)|^p > \hat{\sigma}(\psi_0)(t)$. So, $|\psi(\nu)(t)|^p > \min\{|\psi(\mu')(t) - \psi(\nu)(t)|^p, |\psi(\nu)(t)|^p\}$.  Therefore, $|\psi(\mu')(t) - \psi(\nu)(t)|^p \leq \hat{\sigma}(\psi_0)(t)$.  So, 
$|\psi(\nu)(t)|^p \leq 2^p\hat{\sigma}(\psi_0)(t)$ since $|a + b|^p \leq 2^{p-1}(|a|^p + |b|^p)$.  
\end{proof}

\section{Computable world}\label{sec:computable}

We now have all the pieces in place to prove Theorems \ref{thm:lifting.comp} and \ref{thm:comp.disint.Lp01}.

\begin{proof}[Proof of Theorem \ref{thm:lifting.comp}:]
Suppose $\phi_1$ is a computable disintegration of $L^p(\Omega_1)^\#$ and that 
$\phi_2$ is a computable disintegration of $L^p(\Omega_2)^\#$ that is computably isomorphic to $\phi_1$.  

The domain of $\phi_j$ is c.e., so there is a computable surjection $G_j'$ of $\N$ onto $\dom(\phi_j)$.  
Let $G_j = \phi_j \circ G_j'$.  Thus, $G_j$ is a structure on $L^p(\Omega_j)$.  So, let 
$L^p(\Omega_j)^+ = (L^p(\Omega_j), \phi_j)$.   Since $\phi_j$ is a computable disintegration of $L^p(\Omega_j)^\#$, it follows that $L^p(\Omega_j)^+$ is a computable presentation of $L^p(\Omega_j)$ and that the identity map is a computable map of $L^p(\Omega_j)^+$ onto $L^p(\Omega_j)^\#$.  

Let $f$ be a computable isomorphism of $\phi_1$ with $\phi_2$.  Thus, by Theorem \ref{thm:extension.isomorphism}, there is a unique linear isometric map of $L^p(\Omega_1)$ onto $L^p(\Omega_2)$ that lifts $f$; denote this map by $T$.  
Since $T$ lifts $f$, it follows that $\{T(G_1(n))\}_n$ is a computable sequence of $L^p(\Omega_2)^+$.  
Since $T$ is bounded, it follows that 
$T$ is a computable map of $L^p(\Omega_1)^+$ onto $L^p(\Omega_2)^+$.  
Thus, there is a computable linear isometry of $L^p(\Omega_1)^\#$ onto $L^p(\Omega_2)^\#$.  Since $T$ is an isometry, $\norm{T} = 1$.  Thus, the conclusion holds uniformly. 
\end{proof}

\begin{proof}[Proof of Theorem \ref{thm:comp.disint.Lp01}:]
Without loss of generality, suppose $\norm{\phi(\lambda)}_p = 1$.  
Let $S = \dom(\phi)$.  Every c.e. tree is computably isomorphic to a computable tree.  So, without loss of generality, we assume $S$ is computable.

Set $I(\lambda) = [0,1]$.  
Suppose $\nu \in S$, and let $\nu_0 <_{lex} \nu_1 <_{lex} \ldots$ be the children of $\nu$ in $S$.  
For each $n$, set 
\[
I(\nu_n) = \left[ \sum_{j < n} \norm{\psi(\nu_j)}_p^p, \sum_{j \leq n} \norm{\psi(\nu_j)}_p^p \right]. 
\]
Since $\phi(\nu) \preceq \phi(\lambda)$ for all $\nu \in S$, it follows that $I(\nu) \subseteq [0,1]$ for all $\nu$.  
Set $\psi(\nu) = \chi_{I(\nu)}$.  It follows that $\psi$ is a separating antitone map.  
It also follows that $\psi$ is computable, and that the identity map gives a computable isomorphism of $\psi$ with $\phi$.  Thus, by Proposition \ref{prop:iso.int.disint}, $\psi$ is a disintegration.  
\end{proof}

To prove Theorem \ref{thm:disint.comp}, we augment the classical material developed so far with the following three lemmas.  The third lemma requires the notion of a success index which we define now.

\begin{definition}\label{def:success.index}
Suppose $L^p(\Omega)^\# = (L^p(\Omega), R)$ is a presentation.  
Let $S$ be a finite orchard, and let $\psi : S \rightarrow L^p(\Omega)^\#$.  The \emph{success index of $\psi$} is the largest integer $N$ so that 
$d(R(j), \langle \ran(\psi) \rangle) < 2^{-N}$ whenever $0 \leq j < N$ and 
\[
\norm{ \psi(\nu) - \sum_{\nu' \in \nu^+ \cap S} \psi(\nu')}_p < 2^{-N}
\]
whenever $\nu$ is a nonterminal node of $S$. 
\end{definition}

The success index of an antitone separating map can be viewed as a measure of how close it is to being a 
disintegration. Antitone separating maps with larger success indices are closer to being disintegrations.

\begin{lemma}\label{lm:c.e.open}
Suppose $\mathcal{B}$ is a Banach space and let $\mathcal{B}^\#$ be a computable presentation of $\mathcal{B}$.  Let $S$ be a finite set of nodes.
\begin{enumerate}
	\item The set of all injective maps in $\mathcal{B}^S$ is a c.e. open subset of $\mathcal{B}^\#$;  furthermore, an index of this set can be computed from $S$.  \label{lm:c.e.open::itm:injective} 
	
	\item Suppose $v_0, \ldots, v_k$ are computable vectors of $\mathcal{B}^\#$ and $N \in \N$.  
	Then, the set of all $\psi \in \mathcal{B}^S$ so that 
	$d(v_j, \langle \ran(\psi) \rangle) < 2^{-N}$ whenever $0 \leq j \leq k$ is a c.e. open subset of 
	$\mathcal{B}^\#$; furthermore an index of this set can be computed from $N, S$ and indices of $v_0, \ldots, v_k$.  \label{lm:c.e.open::itm:distance}
	
	\item Suppose $N \in \N$ and $S$ is an orchard.  Then, the set of all $\psi \in \mathcal{B}_S$ so that 
\[
\norm{\psi(\nu) - \sum_{\mu \in \nu^+ \cap S} \psi(\mu) }_p < 2^{-N}
\]
for every nonterminal node $\nu$ of $S$ is a c.e. open subset of $\mathcal{B}^\#$; furthermore, and index of this set can be computed from $N$ and $S$.
 \label{lm:c.e.open::itm:summative}
\end{enumerate}
\end{lemma}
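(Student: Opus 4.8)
The three parts are all assertions that a certain subset of $\mathcal{B}^S$ is c.e. open, with the index uniformly computable. The uniform strategy is to exhibit, for each part, an algorithm that enumerates open rational balls of $(\mathcal{B}^S)^\#$ contained in the target set, and to argue that this enumeration catches every such ball. Recall that the norm on $\mathcal{B}^S$ is the max norm $\norm{f}_S = \max_{t \in S} \norm{f(t)}$, that $(\mathcal{B}^S)^\#$ is a computable presentation (as noted in the excerpt), and that the key tool is a local-to-global continuity argument: each defining condition is an open condition witnessed by a rational ball, and openness of each condition plus effectivity of the witness gives a c.e. open set.

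For part \ref{lm:c.e.open::itm:injective}, the plan is to observe that $\psi \in \mathcal{B}^S$ fails to be injective exactly when $\psi(\nu) = \psi(\nu')$ for some pair $\nu \neq \nu'$ in $S$, and that injectivity is an open condition: if $\psi(\nu) \neq \psi(\nu')$ then $\norm{\psi(\nu) - \psi(\nu')} > 0$, and all nearby maps separate this pair as well. First I would enumerate, for each of the finitely many pairs $\nu \neq \nu'$, those rational balls $B$ in $(\mathcal{B}^S)^\#$ on which $\inf_{\psi \in B} \norm{\psi(\nu) - \psi(\nu')} > 0$; since the evaluation map $\psi \mapsto \psi(\nu)$ is computable and the norm is computable, the map $\psi \mapsto \norm{\psi(\nu) - \psi(\nu')}$ is a computable real-valued function, so the set where it is positive is c.e. open by Proposition \ref{prop:preimage.c.e.open} applied to the preimage of $(0,\infty)$. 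Intersecting over the finite set of pairs (which is effective for c.e. open sets) yields the claim, with an index computable from $S$.

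For part \ref{lm:c.e.open::itm:distance}, I would treat each condition $d(v_j, \langle \ran(\psi) \rangle) < 2^{-N}$ separately and then intersect over $0 \leq j \leq k$. The distance $d(v_j, \langle \ran(\psi)\rangle)$ equals $\inf$ over finite rational linear combinations $\sum_{\nu \in S} \alpha_\nu \psi(\nu)$ of $\norm{v_j - \sum_\nu \alpha_\nu \psi(\nu)}$, so it is $< 2^{-N}$ precisely when some single rational combination achieves a norm $< 2^{-N}$. This is a union over the (countably many, effectively listed) choices of rational coefficients $\{\alpha_\nu\}_{\nu \in S}$ of the open condition $\norm{v_j - \sum_\nu \alpha_\nu \psi(\nu)} < 2^{-N}$; each such condition defines a c.e. open set in $\psi$ because $v_j$ is a computable vector and the combination is a computable function of $\psi$, again via Proposition \ref{prop:preimage.c.e.open}. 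Part \ref{lm:c.e.open::itm:summative} is handled identically: for each nonterminal $\nu$, the map $\psi \mapsto \norm{\psi(\nu) - \sum_{\mu \in \nu^+ \cap S} \psi(\mu)}$ is computable, so its preimage of $(-\infty, 2^{-N})$ is c.e. open, and I intersect over the finitely many nonterminal nodes.

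The conditions are each genuinely open (strict inequalities of continuous functions) and witnessed by rational balls, so the enumerations are exhaustive; the one point requiring care is verifying that finite intersection and countable union of c.e. open sets preserve being c.e. open \emph{with uniformly computable indices}, and that the constructions are uniform in the inputs $S$, $N$, and the indices of $v_0, \dots, v_k$. I expect the main subtlety to be the uniformity bookkeeping for part \ref{lm:c.e.open::itm:distance}, since there the enumeration ranges simultaneously over rational balls in $(\mathcal{B}^S)^\#$ and over rational coefficient tuples; but this is routine dovetailing once the computability of the evaluation and norm maps on $(\mathcal{B}^S)^\#$ is in hand, which follows from the fact that $(\mathcal{B}^S)^\#$ is a computable presentation.
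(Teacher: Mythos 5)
Your proposal is correct and follows essentially the same route as the paper's proof: each part is handled by exhibiting the target set as a finite intersection (over pairs of nodes, over $j$, or over nonterminal nodes) and, for part \ref{lm:c.e.open::itm:distance}, a countable union over rational coefficient tuples, of preimages of c.e. open sets under computable maps via Proposition \ref{prop:preimage.c.e.open}, with the uniformity tracked throughout. The only cosmetic difference is that you compose with the norm and pull back open intervals where the paper pulls back $\mathcal{B}-\{\mathbf{0}\}$ and open balls under the vector-valued maps directly; this changes nothing of substance.
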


\begin{proof}
We will repeatedly use the following well-known fact: if $U,V$ are c.e. open subsets of $(\mathcal{B}^S)^\#$, then $U \cap V$ is a c.e. open subset of $(\mathcal{B}^S)^\#$ and an index of $U \cap V$ can be computed from indices of $U,V$.\\

(\ref{lm:c.e.open::itm:injective}): Let $\mathcal{S}_1$ denote the set of all injective maps in $\mathcal{B}^S$.  
Suppose $\nu, \nu' \in S$ are distinct.  Define $G_{\nu, \nu'} : \mathcal{B}^S \rightarrow \mathcal{B}$ by 
\[
G_{\nu, \nu'}(\psi) = \psi(\nu) - \psi(\nu').
\]
Then, $G_{\nu, \nu'}$ is a computable map of $(\mathcal{B}^S)^\#$ into $\mathcal{B}^\#$; furthermore an 
index of $G_{\nu, \nu'}$ can be computed from $S$, $\nu$, and $\nu'$.  
The set of nonzero vectors in $\mathcal{B}$ is a c.e. open subset of $\mathcal{B}^\#$.  
So, by Proposition \ref{prop:preimage.c.e.open}, $U_{\nu, \nu'} := G_{\nu, \nu'}^{-1}[\mathcal{B} - \{\mathbf{0}\}]$ is a c.e. open subset of $(\mathcal{B}^S)^\#$; furthermore an index of $U_{\nu, \nu'}$ can be computed from 
$\nu$, $\nu'$, and $S$.  
Since $\mathcal{S}_1 = \bigcap_{\nu, \nu'} U_{\nu, \nu'}$, it follows that 
$\mathcal{S}_1$ is a c.e. open subset of $(\mathcal{B}^S)^\#$ and that an index of $\mathcal{S}_1$
can be computed from $S$.\\

(\ref{lm:c.e.open::itm:distance}): 
By considering intersections, it suffices to consider the case where $k = 0$.  
Let $\mathcal{S}_2$ denote the set of all $\psi \in \mathcal{B}^S$ so that $d(v_0, \langle \ran(\psi) \rangle) < 2^{-N}$.  
Observe that $\psi \in \mathcal{S}_2$ if and only if there exists a map $\beta: S \to \Q(i)$ so that
\begin{equation}
\norm{v_0 - \sum_{\nu \in S} \beta(\nu)\psi(\nu)}_p < 2^{-N}.\label{ineq:beta}
\end{equation}
For each such a map $\beta$, define $F_\beta : \mathcal{B}^S \rightarrow \mathcal{B}$ by 
\[
F_\beta(\psi) = \sum_{\nu \in S} F_\beta(\nu) \psi(\nu).
\]
Then, $F_\beta$ is a computable map of $(\mathcal{B}^S)^\#$ into $\mathcal{B}^\#$; furthermore an index 
of $F_\beta$ can be computed from $S$ and $\beta$.  
Thus, by Proposition \ref{prop:preimage.c.e.open}, $V_\beta := F_\beta^{-1}(B(\mathbf{0}, 2^{-N}))$ is a c.e.
open subset of $(\mathcal{B}^S)^\#$; furthermore an index of $V_\beta$ can be computed from $\beta$, $N$, and $S$.  Since $\mathcal{S}_2 = \bigcup_\beta V_\beta$, it follows that $\mathcal{S}_2$ is a c.e. open subset of $(\mathcal{B}^S)^\#$ and that an index of $\mathcal{S}_2$ can be computed from $S$ and $N$.\\

(\ref{lm:c.e.open::itm:summative}):  
Now, suppose $S$ is an orchard.  
Let $\mathcal{S}_3$ denote the set of all $\psi \in (\mathcal{B}^S)^\#$ so that for every nonterminal node $\nu$ of $S$	
\[
\norm{\psi(\nu) - \sum_{\nu' \in \nu^+ \cap S} \psi(\nu')} < 2^{-N}
\]
where $\nu'$ ranges over all children of $\nu$ in $S$.
Fix a nonterminal node $\nu$ of $S$.  
Define a map $F_\nu : \mathcal{B}^S \rightarrow \mathcal{B}$ by 
\[
F_\nu(\psi) = \psi(\nu) - \sum_{\nu' \in \nu^+ \cap S} \psi(\nu').
\]
Then, $F_\nu$ is a computable map of $(\mathcal{B}^S)^\#$ into $\mathcal{B}$; furthermore an index of 
$F_\nu$ can be computed from $\nu$ and $S$.  
Thus, $W_\nu := F_\nu^{-1}(B(\mathbf{0}; 2^{-N}))$ is a c.e. open subset of $(\mathcal{B}^S)^\#$, and an 
index of $W_\nu$ can be computed from $S$, $N$, and $\nu$.  
Since $\mathcal{S}_3 = \bigcap_\nu W_\nu$, $\mathcal{S}_3$ is a c.e. open subset of 
$(\mathcal{B}^S)^\#$ and an index of $\mathcal{S}_3$ can be computed from $S$ and $N$.
\end{proof}

\begin{lemma}\label{lm:sep.anti.c.e.closed}
Suppose $p \geq 1$ is a computable real so that $p \neq 2$, and suppose $L^p(\Omega)^\#$ is a computable presentation of $L^p(\Omega)$.  Let $S$ be a finite orchard.  Then, the set of all separating antitone maps in $L^p(\Omega)^S$ is a c.e. closed subset of $(L^p(\Omega)^S)^\#$.  Furthermore, an index of this set can be computed from $S$.  
\end{lemma}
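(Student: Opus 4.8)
The plan is to realize the set $\mathcal{C}$ of separating antitone maps in $L^p(\Omega)^S$ as the zero set of a computable functional and then invoke Proposition \ref{prop:bounding}. By Corollary \ref{cor:sigma}, a map $\psi \in L^p(\Omega)^S$ is separating and antitone if and only if $\sigma(\psi) = 0$, so $\mathcal{C} = \sigma^{-1}[\{0\}]$. Since $\sigma$ is a computable real-valued map from $(L^p(\Omega)^S)^\#$ into $\C$ with an index computable from $S$, and since $\sigma \geq 0$, the functional $f(\psi) = 2\,\sigma(\psi)^{1/p}$ is again computable with an index computable from $S$ (here we use that $p$ is a computable real, so $t \mapsto 2t^{1/p}$ is computable on $[0,\infty)$). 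Clearly $f^{-1}[\{0\}] = \sigma^{-1}[\{0\}] = \mathcal{C}$. Thus, by Proposition \ref{prop:bounding}, it suffices to establish the bounding estimate $f(\psi) \geq d(\psi, \mathcal{C})$ for every $\psi$; that is, to produce a separating antitone $\psi'$ with $\norm{\psi - \psi'}_S^p \leq 2^p \sigma(\psi)$.

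Producing this $\psi'$ together with the stated bound is the main obstacle, and I would obtain it by rerunning the nullification construction from the proof of Theorem \ref{thm:sigma.estimate}, but with no base partial disintegration to preserve. Concretely, let $\hat{\sigma}(\psi)$ denote the pointwise functional
\[
\hat{\sigma}(\psi)(t) = \sum_{\nu | \nu'} \min\{|\psi(\nu)(t)|^p, |\psi(\nu')(t)|^p\} + \sum_{\nu' \supset \nu} \min\{|\psi(\nu')(t) - \psi(\nu)(t)|^p, |\psi(\nu')(t)|^p\},
\]
with $\nu, \nu'$ ranging over $S$; by Theorem \ref{thm:lamperti}(\ref{thm:lamperti::itm:ineq}) one has $\int_X \hat{\sigma}(\psi)\,d\mu \leq \sigma(\psi)$. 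For $\nu \in S$, define $N_\nu$ to be the set of all $t \in X$ such that $|\psi(\mu)(t)|^p \leq \hat{\sigma}(\psi)(t)$ for some $\mu \subseteq \nu$ with $\mu \in S$, and let $\rho(\nu)$ be the minimal ancestor of $\nu$ lying in $S$ (the root, in the orchard, of the tree containing $\nu$). Set $\psi'(\nu) = \psi(\rho(\nu)) \cdot (1 - \chi_{N_\nu})$. Since $N_\nu \subseteq N_{\nu'}$ whenever $\nu \subseteq \nu'$ and $\rho$ is constant on each tree, $\psi'$ is antitone; and the argument of Theorem \ref{thm:sigma.estimate} shows that $1 - \chi_{N_\nu}$ and $1 - \chi_{N_{\nu'}}$ are disjointly supported whenever $\nu | \nu'$, so $\psi'$ is separating.

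Finally, the same pointwise case analysis as in Theorem \ref{thm:sigma.estimate}—splitting on whether $t \in N_\nu$, using the ancestor term for the pair $(\rho(\nu), \nu)$ and the inequality $|a+b|^p \leq 2^{p-1}(|a|^p + |b|^p)$—yields $|\psi(\nu)(t) - \psi'(\nu)(t)|^p \leq 2^p \hat{\sigma}(\psi)(t)$ for all $\nu \in S$ and all $t \in X$. Integrating and taking the maximum over $\nu$ gives $\norm{\psi - \psi'}_S^p \leq 2^p \int_X \hat{\sigma}(\psi)\,d\mu \leq 2^p \sigma(\psi) = f(\psi)^p$, which is exactly the bounding estimate. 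Proposition \ref{prop:bounding} then shows that $\mathcal{C} = f^{-1}[\{0\}]$ is c.e. closed with an index computable from an index of $f$, and hence from $S$.
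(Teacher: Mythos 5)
Your proposal is correct and takes essentially the same route as the paper: both define $f(\psi) = 2\sigma(\psi)^{1/p}$, identify the separating antitone maps with $f^{-1}[\{0\}]$ via Corollary \ref{cor:sigma}, establish the bound $f(\psi) \geq d(\psi, \mathcal{H})$, and conclude with Proposition \ref{prop:bounding}. The only difference is that the paper obtains the distance bound by citing Theorem \ref{thm:sigma.estimate} directly (whose hypotheses, as stated, require a nonempty base partial disintegration), whereas you rerun its nullification construction with no base and the tree roots playing the role of source nodes; this is a correct and careful way to close that small gap.
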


\begin{proof}
Let $\mathcal{H}$ denote the set of all separating antitone maps in $L^p(\Omega)^S$.  For each 
$\psi \in L^p(\Omega)^S$, let $f(\psi) = 2^p\sigma(\psi)$.  Thus, $f$ is a computable nonnegative function from 
$(L^p(\Omega)^S)^\#$ into $\C$; furthermore, an index of $f$ can be computed from $S$.  By Theorem \ref{thm:sigma.estimate}, $f(\psi) \geq d(\psi, \mathcal{H})$.  It follows from Corollary \ref{cor:sigma}, that 
$\mathcal{H} = f^{-1}[\{0\}]$.  So, by Proposition \ref{prop:bounding}, $\mathcal{H}$ is a c.e. closed 
subset of $(L^p(\Omega)^S)^\#$ and an index of $\mathcal{H}$ can be computed from $S$.
\end{proof}

\begin{lemma}\label{lm:approx.extension}
Suppose $p \geq 1$ is a computable real so that $p \neq 2$.  Let $\Omega$ be a separable measure space, and let $L^p(\Omega)^\#$ be a computable presentation of $L^p(\Omega)$.  Assume $\phi : S \rightarrow L^p(\Omega)$ is a computable partial disintegration of $L^p(\Omega)^\#$ whose success index is at least $n_1$.  Then, for every $k,n \in \N$, there is a computable partial disintegration $\psi$ of $L^p(\Omega)^\#$ so that 
$\dom(\psi) \supseteq S$, $\norm{\psi|_S - \phi}_S < 2^{-k}$, the success index of $\psi$ is at least $n$, 
and the success index of $\psi|_S$ is at least $n_1$.  Furthermore, $\dom(\psi)$ and an index of $\psi$ can be computed from $k,n$, and an index of $\phi$.
\end{lemma}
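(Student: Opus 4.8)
The plan is to reduce the construction of $\psi$ to a single application of Proposition \ref{prop:comp.point}, by packaging every clause of the desired conclusion as membership in one c.e.\ open set intersected with one c.e.\ closed set, and then searching for a finite orchard $S'$ on which this intersection is nonempty. The only genuinely classical ingredient is an existence statement that certifies the search terminates.

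First I would isolate that classical fact: there is a finite orchard $S' \supseteq S$ and a partial disintegration $\psi_1 : S' \to L^p(\Omega) - \{\mathbf{0}\}$ that extends $\phi$ (so $\psi_1|_S = \phi$), that is \emph{exactly} summative, and that satisfies $d(R(j), \langle \ran(\psi_1)\rangle) < 2^{-n}$ for $0 \le j < n$. To obtain it, apply Theorem \ref{thm:extend.partial.disint} with $F = \{R(0), \dots, R(n-1)\}$ and precision $2^{-n}$ to get a partial disintegration $\psi_0 \supseteq \phi$ with the required span approximation, and then, at each nonterminal node $\nu$ of $\dom(\psi_0)$, adjoin a new child carrying the remnant $\psi_0(\nu) - \sum_{\nu' \in \nu^+} \psi_0(\nu')$ whenever that remnant is nonzero. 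Since the remnant is a subvector of $\psi_0(\nu)$ disjoint from the existing children, the augmented map stays separating, antitone, and injective, while its children now sum exactly to the parent at every nonterminal node; adjoining vectors only enlarges the span, so the span approximation survives. Because $\psi_1|_S = \phi$, the restriction $\psi_1|_S$ has distance $0$ from $\phi$ and the same success index, hence at least $n_1$, and exact summativity gives summative defect $0 < 2^{-n}$; thus $\psi_1$ witnesses nonemptiness of the intersection below for $S' = \dom(\psi_1)$.

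Next, for each finite orchard $S'$ with $S \subseteq S'$, I would assemble a c.e.\ open set $\mathcal{U}_{S'} \subseteq (L^p(\Omega)^{S'})^\#$ as the intersection of the following conditions on $\psi \in L^p(\Omega)^{S'}$: $\psi$ is injective and all its values are nonzero (Lemma \ref{lm:c.e.open}(\ref{lm:c.e.open::itm:injective}) together with the c.e.\ open set $\mathcal{B} - \{\mathbf{0}\}$); $d(R(j), \langle\ran(\psi)\rangle) < 2^{-n}$ for $0 \le j < n$ and the summative defect of $\psi$ is below $2^{-n}$ at each nonterminal node (Lemma \ref{lm:c.e.open}(\ref{lm:c.e.open::itm:distance}) and (\ref{lm:c.e.open::itm:summative})); $\norm{\psi|_S - \phi}_S < 2^{-k}$ (the preimage of $B(\phi, 2^{-k})$ under the computable restriction map); and the two level-$n_1$ success conditions on $\psi|_S$, again via Lemma \ref{lm:c.e.open}(\ref{lm:c.e.open::itm:distance})--(\ref{lm:c.e.open::itm:summative}) applied to the orchard $S$ and pulled back along restriction. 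Each piece is c.e.\ open with index computable from $k, n, n_1, S, S'$ and an index of $\phi$, so by Proposition \ref{prop:preimage.c.e.open} and closure of c.e.\ open sets under finite intersection, $\mathcal{U}_{S'}$ is c.e.\ open uniformly in these data. Let $\mathcal{H}_{S'}$ be the c.e.\ closed set of separating antitone maps on $S'$ from Lemma \ref{lm:sep.anti.c.e.closed}. Then every $\psi \in \mathcal{U}_{S'} \cap \mathcal{H}_{S'}$ is a computable partial disintegration with $\dom(\psi) = S' \supseteq S$ satisfying all four required properties.

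Finally I would search for a workable $S'$. Nonemptiness of $\mathcal{U}_{S'} \cap \mathcal{H}_{S'}$ is a c.e.\ condition: it holds iff some rational ball $B$ is simultaneously enumerated into $\mathcal{U}_{S'}$ (as a ball contained in it) and by the index of $\mathcal{H}_{S'}$ (as a ball meeting it), since then $\emptyset \ne B \cap \mathcal{H}_{S'} \subseteq \mathcal{U}_{S'} \cap \mathcal{H}_{S'}$, while conversely any point of the intersection lies in such a $B$. Dovetailing this test over all finite orchards $S' \supseteq S$ must halt, because the witness $\psi_1$ guarantees at least one $S'$ with nonempty intersection. For the first such $S'$ found, Proposition \ref{prop:comp.point} returns an index of a computable $\psi \in \mathcal{U}_{S'} \cap \mathcal{H}_{S'}$ with $\dom(\psi) = S'$, and every step is uniform in $k, n, n_1$ and an index of $\phi$. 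I expect the main obstacle to be the classical preprocessing that upgrades Theorem \ref{thm:extend.partial.disint} to an exactly summative extension without disturbing injectivity, separation, or the span approximation, together with checking that membership in $\mathcal{U}_{S'} \cap \mathcal{H}_{S'}$ certifies \emph{every} clause of the conclusion for an arbitrary point of the intersection---in particular that the level-$n_1$ conditions on $\psi|_S$ are genuinely enforced, rather than holding only for the witness $\psi_1$.
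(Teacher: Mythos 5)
Your proposal is correct and follows essentially the same route as the paper: package injectivity, the success-index bounds for $\psi$ and for $\psi|_S$, and the $2^{-k}$-closeness to $\phi$ into a c.e.\ open subset of $(L^p(\Omega)^{S'})^\#$ via Lemma \ref{lm:c.e.open}, intersect with the c.e.\ closed set of separating antitone maps from Lemma \ref{lm:sep.anti.c.e.closed}, search effectively for a finite orchard $S'$ making the intersection nonempty, and extract a computable point by Proposition \ref{prop:comp.point}. The one place you go beyond the paper's write-up is in justifying nonemptiness: the paper simply cites Theorem \ref{thm:extend.partial.disint}, whereas you correctly note that the extension it produces need not meet the summative-defect clause of the success index and repair this by adjoining remnant children to make the witness exactly summative --- a legitimate and welcome filling-in of a step the paper leaves implicit.
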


\begin{proof}
For the moment, fix a finite orchard $S'$.  Let $U_{S'}$ denote the set of all injective maps in $L^p(\Omega)^{S'}$.  Let $V_{S', n}$ denote the set of all maps in $S'$ whose success index is at least $n$.  
Let $\mathcal{H}_{S'}$ denote the set of all separating antitone maps in $L^p(\Omega)^{S'}$.  

By Lemma \ref{lm:c.e.open}, $U_{S'}$ and $V_{S', n}$ are c.e. open subsets of $(L^p(\Omega)^{S'})^\#$ and 
indices of these sets can be computed from $S'$, $n$.  By Lemma \ref{lm:sep.anti.c.e.closed}, $\mathcal{H}_{S'}$ is a c.e. closed subset of $(L^p(\Omega)^{S'})^\#$ and an index of $\mathcal{H}_{S'}$ can be computed from $S'$.

When $S' \supseteq S$, let $\pi_{S'}$ denote the canonical projection of $L^p(\Omega)^{S'}$ onto 
$L^p(\Omega)^S$, and let 
\[
C_{S'} = U_{S'} \cap V_{S',n} \cap \pi^{-1}_{S'}[B(\phi; 2^{-k}) \cap V_{S, n_1}] \cap \mathcal{H}_{S'}.
\]
By Theorem \ref{thm:extend.partial.disint}, there \emph{is} an $S'$ so that 
$C_{S'} \neq \emptyset$.  Such an $S'$ can be found by an effective search procedure.  By Proposition \ref{prop:comp.point}, 
$C_{S'}$ contains a computable vector $\psi$ of $(L^p(\Omega)^{S'})^\#$ and an index of $\psi$ can be computed from $k$, $n$, and an index of $\phi$.  
\end{proof}

We are now ready to prove Theorem \ref{thm:disint.comp}.

\begin{proof}[Proof of Theorem \ref{thm:disint.comp}:]
Suppose $L^p(\Omega)^\# = (L^p(\Omega), R)$.  

Set $S_0 = \{(0)\}$.  Since $\Omega$ is nonzero, $R(j_0) \neq \mathbf{0}$ for some $j_0$; such a number $j_0$ can be computed by a search procedure.  Set $\hat{\phi}_0((0)) = R(j_0)$.  %Note that 
%$\hat{\phi}_0$ is a partial disintegration of $L^p(\Omega)$ whose success index is at least $0$.  
By Lemma \ref{lm:c.e.open} we can compute $k_0 \in \N$ so that every map in $B(\hat{\phi}_0; 2^{-k_0})$
is injective and never $0$.

It now follows from Lemma \ref{lm:approx.extension} that there is a sequence $\{\hat{\phi}_n\}_n$ of 
computable partial disintegrations of $L^p(\Omega)^\#$ and a computable sequence $\{k_n\}_n$ of 
nonnegative integers that have following properties. 
\begin{enumerate}
	\item An index of $\hat{\phi}_n$ and a canonical index of $\dom(\hat{\phi}_n)$ can be computed form $n$. 
	
	\item If $S_n =\dom(\phi_n)$, then $S_n \subseteq S_{n+1}$ and $\norm{ \hat{\phi}_{n+1}|_{S_n} - \hat{\phi}_n}_{S_n} < 2^{-(k_n + 1)}$. 
	
	\item Each map in $B(\hat{\phi}_n; 2^{-k_n})$ is injective, never zero, and has a success index that is at least $n$.
\end{enumerate}

So, let $\phi_{n,t} = \hat{\phi}_{t + n} |_{S_n}$ for all $n,t$.  It follows that $\{\phi_{n,t}\}_t$ is a computable
sequence of $(L^p(\Omega)^{S_n})^\#$; furthermore, an index of this sequence can be computed from $n$.  
It also follows that $\norm{\phi_{n,t+1} - \phi_{n,t}}_{S_n} < 2^{-(k_{n+t} + 1)}$.  Thus, by Proposition \ref{prop:eff.cauchy}, $\phi_n := \lim_t \phi_{n,t}$ is a computable vector of $(L^p(\Omega)^{S_n})^\#$; furthermore, an index of $\phi_n$ can be computed from $n$.  Also, $\norm{\hat{\phi}_n - \phi_n}_{S_n} \leq 2^{-k_n}$.  Thus, $\phi_n$ is a partial disintegration whose success index is at least $n$.  Since $S_n \subseteq S_{n+1}$, $\phi_{n,t+1} \subseteq \phi_{n+1,t}$.  Thus, $\phi_n \subseteq \phi_{n+1}$.  Let $\phi = \bigcup_n \phi_n$.

The only thing that prevents $\phi$ from being a disintegration is that $\lambda \not \in \dom(\phi)$.   
We fix this as follows.  Let $S = \dom(\phi)$.  For each $\nu \in S$, let 
\[
\psi(\nu) = 2^{-\nu(0)} \norm{\phi(\nu(0))}_p^{-1} \phi(\nu).
\]
Then, let 
\[
\psi(\lambda) = \sum_{\nu \in \N^1 \cap S} \psi(\nu).
\]
Since $S$ is computable, it follows that $\psi(\lambda)$ is a computable vector of $L^p(\Omega)^\#$.  It then follows that $\psi$ is a computable disintegration of $L^p(\Omega)^\#$.
\end{proof}

\section{A comparison of arguments for $\ell^p$ and $L^p(\Omega)$ spaces}\label{sec:comparison}

Here, we discuss why arguments previously used to show that certain $\ell^p$ spaces are not computably
categorical can not be applied to $L^p[0,1]$.  We then discuss why our techniques for $L^p$ spaces of 
non-atomic measure spaces can not be applied to $\ell^p$ spaces.

We begin by examining why arguments for $\ell^p$ spaces can not be generalized to $L^p[0,1]$.  As mentioned in Subsection \ref{subsec:survey.prior}, Pour-El and Richards proved that $\ell^1$ is not computably categorical.  Their proof rests on an observation about the extreme points of the unit ball in $\ell^1$.  However, 
the unit ball in $L^1[0,1]$ does not have extreme points.  Later, McNicholl showed that $\ell^p$ is computably categorical only when $p = 2$.  His proof utilizes the Banach-Lamperti characterization of the isometries of $\ell^p$, which extends to $L^p$ spaces of $\sigma$-finite spaces.  However, it also uses the fact that $\ell^p$ has a disjointly supported Schauder basis which $L^p[0,1]$ does not.

We now discuss why our arguments for $L^p$ spaces can not be applied to $\ell^p$ spaces.  
In particular, we look at the three key steps stated in Section \ref{sec:overview}.   
Theorems \ref{thm:lifting.comp} and \ref{thm:disint.comp} do not assume the underlying measure spaces 
are non-atomic.  But, Theorem \ref{thm:comp.disint.Lp01} does.  And, the construction in \cite{McNicholl.Stull.2016} shows that when $p \neq 2$ there is a computable presentation $\mathcal{B}^\#$ of $\ell^p$ and a computable disintegration $\phi$ of $\mathcal{B}^\#$ that is not computably isomorphic to any computable 
disintegration of the standard presentation of $\ell^p$.

\section{Relative computable categoricity}\label{sec:rcc}

We begin by defining what we mean by the diagram of a presentation of a Banach space.  Our approach parallels that in \cite{Greenberg.Knight.Melnikov.Turetsky.2016}.  

Suppose $\mathcal{B}$ is a Banach space and $\mathcal{B}^\# = (\mathcal{B}, R)$ is a presentation of 
$\mathcal{B}$.  
We define the \emph{diagram} of $\mathcal{B}^\#$ to be the set of all triples
$(v,r_0,r_1)$ so that $v$ is a rational vector of $\mathcal{B}^\#$, $r_0, r_1 \in \Q$, and 
$r_0 < \norm{v} < r_1$.  We denote the diagram of $\mathcal{B}^\#$ by $D(\mathcal{B}^\#)$.  
 
We say that a separable Banach space $\mathcal{B}$ is \emph{relatively computably categorical} if there is a Turing machine $M$ so that whenever $\mathcal{B}^\#$ and $\mathcal{B}^+$ are 
presentations of $\mathcal{B}$ and $\{d_n^\#\}_{n \in \N}$, $\{d_n^+\}_{n \in \N}$ are enumerations of $D(\mathcal{B}^\#)$, $D(\mathcal{B}^+)$ respectively, their join computes an isometric isomorphism of $\mathcal{B}^\#$ onto $\mathcal{B}^+$.  

%We define the \emph{lower diagram} of $\mathcal{B}^\#$ to be the 
%set of all pairs $(v, r)$ where $v$ is a rational vector of $\mathcal{B}^\#$ and $\norm{v} < r$.  
%We define the \emph{upper diagram} of $\mathcal{B}^\#$ to be the set of all pairs $(v,r)$ so that $v$ is a rational vector of $\mathcal{B}^\#$ and $\norm{v} > r$.  We define the \emph{diagram} of $\mathcal{B}^\#$ to be the union of the upper and lower diagrams of $\mathcal{B}^\#$.   

Our proofs are sufficiently uniform to show the following.

\begin{theorem}\label{thm:rcc}
If $L^p(\Omega)$ is computably presentable, and if $\Omega$ is non-atomic and separable, then 
$L^p(\Omega)$ is relatively computably categorical.
\end{theorem}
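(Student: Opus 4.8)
The plan is to observe that the entire chain of arguments establishing Theorem \ref{thm:main} relativizes uniformly, with the diagram of an arbitrary presentation playing the role of the oracle. First I would fix the notion at issue: relative computable categoricity of a Banach space $\mathcal{B}$ means that whenever $\mathcal{A}^\#$ is a computable presentation of $\mathcal{B}$ and $\mathcal{B}^\#$ is an arbitrary presentation of a space isometrically isomorphic to $\mathcal{B}$, the diagram of $\mathcal{B}^\#$ computes a linear isometry of $\mathcal{A}^\#$ onto $\mathcal{B}^\#$. Since $p$ is a computable real (the standing assumption of the paper) and, discarding the trivial zero case, $L^p(\Omega)$ is nonzero and non-atomic, the standard presentation of $L^p[0,1]$ is available as the fixed computable presentation $\mathcal{A}^\#$; by Carath\'eodory's theorem every space isometric to $L^p(\Omega)$ is isometric to $L^p[0,1]$, so it suffices to show that for an arbitrary presentation $\mathcal{B}^\#$ of such a space, its diagram $\Delta$ computes a linear isometry onto the standard presentation of $L^p[0,1]$.

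The first technical step is to check that $\Delta$ renders $\mathcal{B}^\#$ a presentation on which the norm is $\Delta$-computable on rational vectors, in the sense of Section \ref{subsec:computable}. This is immediate from the definition of the diagram: given a rational vector $v$ of $\mathcal{B}^\#$ and a precision $2^{-k}$, one searches the lower and upper diagrams for rationals $r_1 < r_2$ with $r_2 - r_1 < 2^{-k}$ such that $(v, r_2)$ lies in the lower diagram and $(v, r_1)$ in the upper diagram, thereby trapping $\norm{v}$ in an interval of length $2^{-k}$. Such a pair is always found, so $\mathcal{B}^\#$ is, relative to $\Delta$, a computable presentation in the exact sense required by the developments of Section \ref{sec:computable}.

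Next I would relativize the three key theorems of Section \ref{sec:overview} and their supporting material. Every proposition and lemma of Section \ref{sec:computable} (Propositions \ref{prop:preimage.c.e.open}--\ref{prop:eff.cauchy} and Lemmas \ref{lm:c.e.open}, \ref{lm:sep.anti.c.e.closed}, \ref{lm:approx.extension}) is phrased purely in terms of the computable-analysis toolkit -- c.e. open and c.e. closed sets, preimages under computable maps, effective Cauchy limits, and the numerical test $\sigma$ -- each of which relativizes verbatim to an arbitrary oracle, since the classical inputs (Theorems \ref{thm:extend.partial.disint} and \ref{thm:sigma.estimate} and Corollary \ref{cor:sigma}) carry no computability content. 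Consequently the relativized Theorem \ref{thm:disint.comp} produces from $\mathcal{B}^\#$ a $\Delta$-computable disintegration $\phi$ of $\mathcal{B}^\#$, which after scaling we may take to satisfy $\norm{\phi(\lambda)}_p = 1$; the relativized Theorem \ref{thm:comp.disint.Lp01} (using that $\Omega$ is non-atomic and separable) yields a $\Delta$-computable disintegration of $L^p[0,1]$ together with a $\Delta$-computable isomorphism with $\phi$; and the relativized Theorem \ref{thm:lifting.comp} lifts this isomorphism to a $\Delta$-computable linear isometry between $\mathcal{B}^\#$ and the standard presentation of $L^p[0,1]$, whose inverse (again $\Delta$-computable, as a surjective linear isometry inverts relative to the same oracle) is the map demanded by the definition. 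The case $p = 2$ falls outside Theorem \ref{thm:disint.comp}, but there $L^2(\Omega)$ is a separable Hilbert space, and the classical proof that separable Hilbert spaces are computably categorical -- a Gram--Schmidt orthonormalization of the dense sequence -- likewise relativizes to $\Delta$.

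The main obstacle is not any single computation but the bookkeeping needed to verify that the relativization is genuinely \emph{uniform} in the oracle: I must confirm that no step in Section \ref{sec:computable} appeals to anything beyond the $\Delta$-computability of the norm on rational vectors, so that the single oracle $\Delta$ suffices throughout rather than a sequence of successively stronger oracles. The most delicate points are the effective search procedures in the proof of Theorem \ref{thm:disint.comp} and in Lemma \ref{lm:approx.extension}, whose termination rests on the nonemptiness guaranteed by the purely classical Theorem \ref{thm:extend.partial.disint}; once one checks that these searches, and the invocations of Proposition \ref{prop:comp.point}, consult only $\Delta$, the relativized chain composes and the theorem follows.
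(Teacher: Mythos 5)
Your proposal is correct and follows essentially the same route as the paper, which simply observes that its proofs of Theorems \ref{thm:lifting.comp}--\ref{thm:disint.comp} are uniform enough to relativize to the diagram of an arbitrary presentation; you have merely spelled out the details (diagram-computability of the norm on rational vectors, relativization of the c.e.\ open/closed machinery, and the Hilbert-space case $p=2$) that the paper leaves implicit.
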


\section{Computable measure spaces}\label{sec:comp.msr.spaces}

We first define what we mean by a computable presentation of a measure space.  Our approach parallels that in \cite{Ding.Weihrauch.Wu.2009}.

To begin, suppose $\mathcal{R}$ is a ring of sets.  A \emph{structure} on $\mathcal{R}$ is a map of $\N$ onto $\mathcal{R}$.  If $R$ is a structure on $\mathcal{R}$, the pair $(\mathcal{R}, R)$ is called a \emph{presentation} of $\mathcal{R}$.  

Now, suppose $\Omega = (X, \mathcal{M}, \mu)$ is a measure space.  A \emph{structure} on $\Omega$ is a 
structure on a ring that generates $\mathcal{M}$ and whose members have finite measure.  If 
$R$ is a structure on $\Omega$, then the pair $(\Omega, R)$ is called a presentation of $\Omega$.

Suppose $\mathcal{R}^\# = (\mathcal{R}, R)$ is a presentation of a ring of sets.  
We say that $\mathcal{R}^\#$ is a \emph{computable presentation of $\mathcal{R}$} if there are computable functions $f,g$ from $\N^2$ into $\N$ so that for all $m,n \in \N$
\begin{eqnarray*}
R(n) \cup R(m) & = & R(f(n,m))\mbox{, and}\\
%R(n) \cap R(m) & =& R(g(n,m))\\
R(n) - R(m) & = & R(g(m,n)).
\end{eqnarray*}

Let $\Omega$ be a measure space.  Suppose $R$ is a
structure on $\Omega$, and let $\mathcal{R} = \ran(R)$.  We say $(\Omega, R)$ is a \emph{computable presentation of $\Omega$} if $(\mathcal{R}, R)$ is a computable presentation 
of $\mathcal{R}$ and if $\mu(R(n))$ can be computed from $n$; that is if there is an algorithm that given $n, k \in \N$ as input computes a rational number $q$ so that $|q - \mu(R(n))| < 2^{-k}$.  

A measure space $\Omega = (X, \mathcal{M}, \mu)$ is said to be \emph{countably generated} if the $\sigma$-algebra $\mathcal{M}$ is generated by a countable collection of measurable sets each of which has finite measure; such a collection is said to \emph{generate} $\Omega$. 

We have two key results.

\begin{theorem}\label{thm:comp.str.cms}
If $R$ is a computable structure on a measure space $\Omega$, and if $D_R(n) = \chi_{R(n)}$ for all $n \in \N$, then $(L^p(\Omega), D_R)$ is a computable presentation of $L^p(\Omega)$ for every 
computable real $p \geq 1$.  
\end{theorem}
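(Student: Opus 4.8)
The plan is to verify the two requirements in the definition of a computable presentation of a Banach space: first, that $D_R$ is a \emph{structure}, i.e.\ that $\langle \ran(D_R) \rangle = L^p(\Omega)$; and second, that the norm is computable on the rational vectors of $(L^p(\Omega), D_R)$.

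For the structure requirement I would argue that $\mathcal{L}(\ran(D_R))$ is dense in $L^p(\Omega)$. Since $\ran(R) = \mathcal{R}$ is a ring generating $\mathcal{M}$ whose members all have finite measure, the classical approximation theorem gives that $\mathcal{R}$ is dense in $\Omega$: for every $A \in \mathcal{M}$ with $\mu(A) < \infty$ and every $\epsilon > 0$ there is $B \in \mathcal{R}$ with $\mu(A \triangle B) < \epsilon^p$, whence $\norm{\chi_A - \chi_B}_p = \mu(A \triangle B)^{1/p} < \epsilon$ and so $\chi_A \in \overline{\ran(D_R)}$. Because the simple functions with support of finite measure are dense in $L^p(\Omega)$, and each such function is a finite linear combination of characteristic functions $\chi_A$ with $\mu(A) < \infty$, the closed linear span $\langle \ran(D_R)\rangle$ contains a dense set and hence equals $L^p(\Omega)$.

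For norm computability, fix scalars $\alpha_0, \ldots, \alpha_M \in \Q(i)$; I want to compute $\norm{\sum_{j=0}^M \alpha_j \chi_{R(j)}}_p$ to within $2^{-k}$. The key step is to disjointify the sets $R(0), \ldots, R(M)$ into the atoms of the finite Boolean algebra they generate. For each nonempty $T \subseteq \{0, \ldots, M\}$ set
\[
E_T = \Big(\bigcap_{j \in T} R(j)\Big) - \Big(\bigcup_{j \notin T} R(j)\Big).
\]
The ring $\mathcal{R}$ is closed under intersection because $A \cap B = A - (A - B)$, so using the given computable functions $f$ (union) and $g$ (difference) one computes, uniformly from $T$, an index $e_T$ with $R(e_T) = E_T$. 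The $E_T$ are pairwise disjoint, and $\sum_j \alpha_j \chi_{R(j)}$ is constantly $\sum_{j \in T} \alpha_j$ on $E_T$ and vanishes off $\bigcup_T E_T$, so
\[
\norm{\sum_{j=0}^M \alpha_j \chi_{R(j)}}_p^p = \sum_{\emptyset \neq T \subseteq \{0,\ldots,M\}} \Big| \sum_{j \in T} \alpha_j \Big|^p \mu(E_T).
\]
Each $\mu(E_T) = \mu(R(e_T))$ is computable to arbitrary precision by hypothesis, each coefficient $|\sum_{j \in T} \alpha_j|$ is the square root of a nonnegative rational, and since $p$ is a computable real the maps $x \mapsto x^p$ and $y \mapsto y^{1/p}$ are computable on the nonnegative reals (vanishing of a coefficient is decidable, in which case the corresponding term is $0$). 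Hence the right-hand side, and then its $p$-th root, is computable uniformly in $k$ and the $\alpha_j$.

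I expect the main obstacle to be the density argument rather than the norm computation: verifying that a generating ring of finite-measure sets is dense in $\Omega$ in the symmetric-difference sense rests on the classical approximation theorem, and some care is needed when $\Omega$ fails to be globally $\sigma$-finite, where one restricts attention to the particular finite-measure set $A$ being approximated (which lies in $\sigma(\mathcal{R})$). The norm computation, by contrast, is a routine finite disjointification once one observes that the ring operations suffice to produce computable indices for every Boolean combination of $R(0), \ldots, R(M)$.
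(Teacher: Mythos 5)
Your proposal is correct and follows essentially the same route as the paper: density of the linear span via the classical approximation theorem for the generating ring (the paper's Theorem \ref{thm:density.ring} and Corollary \ref{cor:structure.1}), and norm computability by disjointifying $R(0),\ldots,R(M)$ into the atoms of the Boolean algebra they generate and summing $|\beta|^p\mu(\cdot)$ over the pieces. Your restriction to nonempty $T$ (so that every piece provably lies in the ring and has finite measure) is a slightly cleaner bookkeeping of the same computation the paper indexes by $h\in\{0,1\}^{M+1}$.
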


\begin{theorem}\label{thm:non.comp.msr.space}
There is a countably generated measure space $\Omega$ that does not have a computable presentation  
but so that $L^p(\Omega)$ has a computable presentation whenever $1 \leq p < \infty$ is computable.
\end{theorem}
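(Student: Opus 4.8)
The plan is to take $\Omega$ to be purely atomic and to hide a noncomputable set inside the measures of its atoms, exploiting the fact that an $L^p$ space of a purely atomic space is isometrically $\ell^p$ and therefore forgets the individual atom weights. Fix a noncomputable set $B\subseteq\N$. Let $X=\N$, let $\mathcal{M}=2^{\N}$, and define a finite measure $\mu$ by declaring each singleton $\{k\}$ an atom of weight
\[
a_k=c_k\,4^{-k},\qquad c_k=\begin{cases}2 & k\in B,\\ 1 & k\notin B.\end{cases}
\]
Then $\mu(X)=\sum_k a_k<\infty$, the atoms are exactly the singletons, and $\Omega=(X,\mathcal{M},\mu)$ is countably generated (by the singletons, each of finite measure).

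For the $L^p$ side I would observe that, because $\Omega$ is purely atomic with infinitely many atoms, $L^p(\Omega)$ is isometrically isomorphic to $\ell^p$ via $f\mapsto (f(k)\,a_k^{1/p})_k$, and this holds regardless of the values of the $a_k$. Since $\ell^p$ carries its standard computable presentation for every computable real $p\ge 1$, and since computable presentability is invariant under surjective linear isometry (transport the presenting map through the isometry; corresponding rational vectors have identical norms), $L^p(\Omega)$ has a computable presentation for every computable real $p\ge1$. This settles the positive half uniformly in $p$.

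The substance is to show that $\Omega$ has \emph{no} computable presentation, and I would argue by contradiction, recovering $B$. Suppose $(\Omega,R)$ is a computable presentation, so that $\mu(R(n))$ is computable uniformly in $n$. Because $X=\N$ and every set is a union of atoms, each $R(n)$ is just a subset $S_n\subseteq\N$ with $\mu(R(n))=\sum_{k\in S_n} c_k 4^{-k}$. The decisive point is that, since each $c_k\in\{1,2\}$, this is a carry-free base-$4$ expansion with digits in $\{0,1,2\}$: the $k$-th base-$4$ digit of the computable real $\mu(R(n))$ equals $c_k$ when $k\in S_n$ and $0$ otherwise, and each digit is itself computable because the tail beyond position $k$ never exceeds $\tfrac{2}{3}4^{-k}$, leaving a fixed gap between the three possible digit values. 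Thus, to decide whether $k\in B$, I would search for an $n$ with nonzero $k$-th digit in $\mu(R(n))$ and read off $c_k$.

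The hard part is to guarantee that this search succeeds for essentially every $k$, i.e.\ that some ring element covers atom $k$; this is where one must contend with the fact that the atoms need not themselves be ring elements and that $R$ may be an arbitrary generating ring rather than the ring of finite unions of atoms. Here I would use only that $R$ generates $\mathcal{M}$: if two distinct atoms $\{j\},\{k\}$ lay in no $R(n)$ (the all-zero ``pattern''), they could not be separated in $\sigma(\ran R)$, contradicting $\sigma(\ran R)=2^{\N}$; hence at most one atom, say $k_0$, is covered by no ring element. For every $k\neq k_0$ the search halts and returns $c_k$, so $\chi_B$ agrees with a computable function off the single point $k_0$, whence $\chi_B$ is computable, contradicting the choice of $B$. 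I expect this coverage-and-separation argument, together with the verification that the base-$4$ digits of $\mu(R(n))$ are uniformly computable, to be the only delicate points; the one possibly uncovered atom costs merely a finite modification and so does not affect the conclusion.
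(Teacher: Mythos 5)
Your proof is correct, but it takes a genuinely different route from the paper's. The paper takes $\Omega=([0,1],\mathcal{M},r\cdot m)$, where $m$ is Lebesgue measure on the dyadic $\sigma$-algebra and $r$ is a positive real that is not lower semicomputable; non-presentability then falls out of a single soft invariant (Proposition \ref{prop:lsc.measure}: the total measure of any computably presented finite measure space is lower semicomputable), and the computable presentation of $L^p(\Omega)$ is obtained by rescaling the characteristic functions of dyadic intervals by a power of $r$. Your example is purely atomic and hides a noncomputable set $B$ in the atom weights. Note that the paper's soft invariant cannot replace your argument: your total measure is $\frac{4}{3}+\sum_{k\in B}4^{-k}$, which \emph{is} lower semicomputable when $B$ is c.e., so you genuinely need the harder, more informative recovery argument you give, namely carry-free base-$4$ digit extraction from the computable reals $\mu(R(n))$ together with the transposition-invariance observation that a ring generating $2^{\N}$ can omit at most one atom. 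Both delicate points check out: since $4^k\mu(R(n))$ always lies within $\frac{2}{3}$ of the integer $\sum_{j\le k}d_j4^{k-j}$ and these integers are separated by gaps of length $\frac{1}{3}$ after scaling, the digits are uniformly computable; and the single possibly uncovered atom is a finite modification. What the paper's route buys is brevity and a non-atomic example, which matters for its narrative (the non-atomic case is where Theorem \ref{thm:main} is claimed to be strictly stronger than Corollary \ref{cor:msr}); what your route buys is that the positive half is essentially free, since $L^p(\Omega)\cong\ell^p$ isometrically for every purely atomic $\Omega$ with infinitely many atoms, and a non-presentability proof that works for an arbitrary noncomputable $B$ rather than relying on a failure of lower semicomputability.
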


To prove Theorem \ref{thm:comp.str.cms}, we need some preliminary material on measure spaces.
 It is well-known that every countably generated measure space is separable but not conversely.   In particular, the following is essentially Theorem A p. 168 of Halmos \cite{Halmos.1950}.

\begin{theorem}\label{thm:density.ring}
Suppose $\Omega$ is a measure space and that 
$\mathcal{G}$ is a countable set that generates $\Omega$.  Then, the ring generated by $\mathcal{G}$ is dense in $\mathcal{S}$.
\end{theorem}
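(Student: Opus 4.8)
The plan is to prove the standard measure-ring approximation result that underlies Halmos's theorem: in the pseudometric $d(A,B)=\mu(A\triangle B)$ on the finite-measure sets, the ring $\mathcal{R}$ generated by $\mathcal{G}$ is dense in the $\sigma$-ring it generates. First I would record that every member of $\mathcal{R}$ has finite measure, since a ring of sets is closed under finite unions and differences and each element of $\mathcal{G}$ has finite measure. Let $\mathcal{N}$ denote the collection of all finite-measure $A\in\mathcal{M}$ such that for every $\epsilon>0$ there is $R\in\mathcal{R}$ with $\mu(A\triangle R)<\epsilon$. It then suffices to show that $\mathcal{N}$ contains every finite-measure member of $\mathcal{M}$.

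Next I would verify that $\mathcal{N}$ is closed under the ring operations, using the elementary containments $(A_1\cup A_2)\triangle(R_1\cup R_2)\subseteq(A_1\triangle R_1)\cup(A_2\triangle R_2)$ together with the analogous ones for differences and for symmetric differences. Thus if $A_1,A_2\in\mathcal{N}$ are approximated to within $\epsilon/2$ by $R_1,R_2\in\mathcal{R}$, then $A_1\cup A_2$ and $A_1\setminus A_2$ are approximated to within $\epsilon$ by $R_1\cup R_2$ and $R_1\setminus R_2\in\mathcal{R}$; hence $\mathcal{N}$ is a ring containing $\mathcal{R}$.

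The essential step is closure under monotone limits. If $A_n\in\mathcal{N}$ increase to a finite-measure set $A$, then continuity of measure from below gives $\mu(A\triangle A_n)=\mu(A\setminus A_n)\to 0$, so choosing $n$ with $\mu(A\triangle A_n)<\epsilon/2$ and $R\in\mathcal{R}$ with $\mu(A_n\triangle R)<\epsilon/2$ yields $\mu(A\triangle R)<\epsilon$; the decreasing case is symmetric, using continuity from above (legitimate because $\mu(A_1)<\infty$). Hence $\mathcal{N}$ is closed under monotone limits of finite-measure sets, and localizing to each finite-measure generator if necessary, the monotone class theorem shows that $\mathcal{N}$ contains every finite-measure set of the $\sigma$-ring generated by $\mathcal{R}$, i.e. of the one generated by $\mathcal{G}$.

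The point demanding the most care — and the step I expect to be the main obstacle — is reconciling this $\sigma$-ring with the full $\sigma$-algebra $\mathcal{M}$ named in the statement. Because every generator has finite measure, the $\sigma$-ring generated by $\mathcal{G}$ lives inside $X_0=\bigcup\mathcal{G}$, whereas $\mathcal{M}$ can in principle contain finite-measure sets meeting $X\setminus X_0$ (as a two-point counting-measure example shows). One must therefore check that each finite-measure $A\in\mathcal{M}$ agrees, modulo a null set, with a member of that $\sigma$-ring — equivalently that $\mu(A\setminus X_0)=0$ — so that $A$ itself lands in $\mathcal{N}$. This reduction is exactly where the finite-measure hypothesis on $\mathcal{G}$ (and, in the intended applications, the effective $\sigma$-finiteness of the relevant part of $\Omega$) is used; once it is in place the monotone-class argument delivers the conclusion, the closure under ring operations being only routine symmetric-difference bookkeeping.
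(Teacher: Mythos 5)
The paper does not actually prove this theorem; it is cited as ``essentially Theorem A p.~168 of Halmos,'' so your reconstruction has to stand on its own. The core of your argument is the standard Halmos proof and is correct: the approximation class $\mathcal{N}$ is a ring containing $\mathcal{R}$, it is closed under monotone limits among sets of finite measure, and the monotone-class argument (properly localized to the generators, since $\mathcal{N}$ itself is not closed under arbitrary increasing unions) shows that $\mathcal{N}$ contains every finite-measure member of the $\sigma$-ring generated by $\mathcal{G}$.

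The gap is in your final paragraph, and you have half-diagnosed it yourself. You correctly reduce the remaining step to showing $\mu(A\setminus X_0)=0$ for every finite-measure $A\in\mathcal{M}$, where $X_0=\bigcup\mathcal{G}$, but you then assert that this is ``exactly where the finite-measure hypothesis on $\mathcal{G}$ is used.'' That hypothesis does not deliver it: your own two-point counting-measure example ($X=\{0,1\}$, $\mathcal{G}=\{\{0\}\}$) satisfies every stated hypothesis, yet $A=\{1\}$ has finite measure, $\mu(A\setminus X_0)=1$, and $\chi_{\{1\}}$ is at distance at least $1$ from the ring $\{\emptyset,\{0\}\}$. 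So no argument from the stated hypotheses can close this step; the statement as literally written needs the additional (implicitly intended) assumption that $\mathcal{G}$ generates $\mathcal{M}$ as a $\sigma$-ring, equivalently that $X=\bigcup\mathcal{G}$ up to a null set --- which is exactly what the paper uses later when it writes $X=\bigcup_n R(n)$ in the proof of Proposition \ref{prop:lsc.measure}. Under that reading your $\sigma$-ring argument already gives the full conclusion and the last paragraph is unnecessary; without it, your final sentence is an unsupported claim rather than a proof step.
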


\begin{corollary}\label{cor:structure.1}
Suppose $\Omega = (X, \mathcal{M}, \mu)$ is a measure space that is generated by a countable set $\mathcal{G}$, and let $\mathcal{R}$ denote the ring generated by $\mathcal{G}$.  Then, 
the linear span of $\{\chi_R\ :\ R \in \mathcal{R}\}$ is dense in $L^p(\Omega)$.
\end{corollary}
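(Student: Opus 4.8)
The plan is to reduce the statement to the classical fact that simple functions are dense in $L^p(\Omega)$ and then to approximate each individual characteristic function of a finite-measure set by the characteristic function of a single ring element, using Theorem \ref{thm:density.ring}. First I would recall that since $1 \leq p < \infty$, the collection of simple functions built from measurable sets of finite measure is dense in $L^p(\Omega)$. Because any such simple function is a finite linear combination of characteristic functions $\chi_A$ with $\mu(A) < \infty$, it suffices to show that each such $\chi_A$ lies in the closure of the linear span of $\{\chi_R\ :\ R \in \mathcal{R}\}$; in fact it suffices to approximate $\chi_A$ by a single $\chi_R$ in the $L^p$-norm.

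Next, given $A \in \mathcal{M}$ with $\mu(A) < \infty$ and $\epsilon > 0$, I would invoke Theorem \ref{thm:density.ring}, which asserts that $\mathcal{R}$ is dense in $\mathcal{M}$, to produce $R \in \mathcal{R}$ with $\mu(A \triangle R) < \epsilon$. Since every generator in $\mathcal{G}$ has finite measure and $\mathcal{R}$ consists of the sets obtained from generators by finitely many unions and differences, each $R \in \mathcal{R}$ has finite measure, so $\chi_R \in L^p(\Omega)$. The key computation is then the elementary identity $|\chi_A - \chi_R| = \chi_{A \triangle R}$, whence
\[
\norm{\chi_A - \chi_R}_p^p = \int_X |\chi_A - \chi_R|^p\ d\mu = \mu(A \triangle R) < \epsilon.
\]
Therefore $\norm{\chi_A - \chi_R}_p < \epsilon^{1/p}$, so $\chi_A$ is a limit point of $\{\chi_R\ :\ R \in \mathcal{R}\}$ in the $L^p$-norm, and the conclusion follows.

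There is no substantial obstacle here; the argument is essentially a routine transfer of density from the measure algebra to $L^p$. The only points requiring a moment of care are confirming that the elements of $\mathcal{R}$ all have finite measure (so that their characteristic functions are genuine $L^p$ vectors) and observing the identity $|\chi_A - \chi_R|^p = \chi_{A \triangle R}$, which is what makes the $L^p$ distance between $\chi_A$ and $\chi_R$ coincide with the measure-algebra distance $\mu(A \triangle R)$ supplied by Theorem \ref{thm:density.ring}.
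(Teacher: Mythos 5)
Your proof is correct and follows essentially the same route as the paper's: both reduce to the density of simple functions supported on finite-measure sets and then use Theorem \ref{thm:density.ring} together with the identity $\norm{\chi_A - \chi_R}_p^p = \mu(A \triangle R)$ to approximate each $\chi_A$ by a $\chi_R$. You merely spell out the norm computation that the paper leaves implicit.
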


\begin{proof}
Let $\mathcal{R}' = \{\chi_R\ :\ R \in \mathcal{R}\}$.  
It follows from Theorem \ref{thm:density.ring} that $\chi_A$ lies in the subspace generated by $\mathcal{R}'$ whenever $A$ is a measurable set whose measure is finite.   Thus, $s$ belongs to the subspace generated by $\mathcal{R}$ whenever $s$ is a simple function whose support has finite measure.   
Since these functions are dense in $L^p(\Omega)$, so is the linear span of $\mathcal{R}'$.
\end{proof}

Suppose $\Omega^\# = (\Omega, R)$ is a presentation of $\Omega$.   Set $D_R(n) = \chi_{R(n)}$ for all $n \in \N$.  It follows from Theorem 
\ref{thm:density.ring} that $(L^p(\Omega), D_R)$ is a presentation of $L^p(\Omega)$ which we refer to as the \emph{induced presentation}.  We are now ready to prove Theorem \ref{thm:comp.str.cms}.

\begin{proof}[Proof of Theorem \ref{thm:comp.str.cms}:]
It follows from Corollary \ref{cor:structure.1} that the linear span of \\
$\{\chi_{R(n)}\ |\ n \in \N\}$ is dense in $L^p(\Omega)$.
Suppose $\alpha_0, \ldots, \alpha_M \in \Q(i)$.  For each $h \in \{0,1\}^{M+1}$ set:
\begin{eqnarray*}
S_h & =  & \bigcap_{h(j) = 1} R_j \cap \bigcap_{h(j) = 0} (X - R_j)\\
\beta_h & = & \sum_{h(j)=1} \alpha_j
\end{eqnarray*}
Since $R$ is a computable structure on $\Omega$, $\mu(S_h)$ can be computed uniformly from $h$.  
Note that $S_{h_1} \cap S_{h_2} = \emptyset$ whenever $h_1, h_2$ are distinct.  Since, 
\[
\sum_{n = 0}^M \alpha_n \chi_{R_n} = \sum_h \beta_h \chi_{S_h}
\]
it follows that
\[
\norm{\sum_{n = 0}^n \alpha_n \chi_{R_n}}_p^p = \sum_h |\beta_h|^p \mu(S_h).
\]
Thus, $\norm{\sum_{n = 0}^n \alpha_n \chi_{R_n}}_p$ can be computed uniformly from $M, \alpha_0, \ldots, \alpha_M$.
\end{proof}

To prove Theorem \ref{thm:non.comp.msr.space} we will need the following observation.

\begin{proposition}\label{prop:lsc.measure}
Suppose $\Omega=(X, \mathcal{M}, \mu)$ is a finite measure space and $\Omega^\#$ is a computable 
presentation of $\Omega$.  Then, $\mu(X)$ is a lower semi-computable real.
\end{proposition}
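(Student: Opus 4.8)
The plan is to show that $\mu(X)$ equals the supremum of the measures of the sets in the ring, $\mu(X) = \sup_n \mu(R(n))$, and then to exploit the uniform computability of each $\mu(R(n))$ to enumerate the rationals lying below $\mu(X)$, which is exactly what lower semi-computability demands.

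First I would establish the classical identity $\mu(X) = \sup_n \mu(R(n))$. Write $\mathcal{R} = \ran(R)$. Since $R$ is a structure on $\Omega$, $\mathcal{R}$ is a ring that generates $\mathcal{M}$ and whose members all have finite measure; hence, by Theorem \ref{thm:density.ring}, $\mathcal{R}$ is dense in $\mathcal{M}$. Because $\Omega$ is finite, $X$ is itself a measurable set of finite measure, so I may apply the density approximation with $A = X$: for every $\epsilon > 0$ there is an $n$ with $\mu(X \triangle R(n)) < \epsilon$. As $R(n) \subseteq X$, this symmetric difference is just $X - R(n)$, whence $\mu(R(n)) = \mu(X) - \mu(X - R(n)) > \mu(X) - \epsilon$. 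Since also $\mu(R(n)) \leq \mu(X)$ for every $n$, the identity $\mu(X) = \sup_n \mu(R(n))$ follows.

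With the identity in hand I would convert it into a semi-decision procedure. Because $\Omega^\#$ is a computable presentation, there is an algorithm that, on input $n, k$, outputs a rational $q_{n,k}$ with $|q_{n,k} - \mu(R(n))| < 2^{-k}$. The left cut of $\mu(X)$ can then be written as
\[
\{ q \in \Q\ :\ q < \mu(X) \} = \{ q \in \Q\ :\ \exists n\ q < \mu(R(n)) \},
\]
and the right-hand side is c.e.: to enumerate it, search over all triples $(q, n, k)$ and emit $q$ whenever $q < q_{n,k} - 2^{-k}$, which guarantees $q < \mu(R(n)) \leq \mu(X)$. Conversely, if $q < \mu(X)$, then $q < \mu(R(n))$ for some $n$ by the identity, and choosing $k$ with $2^{-k} < (\mu(R(n)) - q)/2$ forces the inequality $q < q_{n,k} - 2^{-k}$ to hold, so $q$ is indeed enumerated. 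A c.e. left cut is precisely the assertion that $\mu(X)$ is lower semi-computable.

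The entire content of the argument sits in the first step, and there is no genuine obstacle there beyond verifying that Theorem \ref{thm:density.ring} applies; the computability bookkeeping in the second step is routine. It is worth remarking on \emph{why} one obtains only lower semi-computability rather than full computability: the ring grants access to $X$ only from the inside, through finite-measure sets $R(n) \subseteq X$, so the enumerated sets can certify lower bounds on $\mu(X)$ but never an upper bound, since mass not yet captured by the listed ring elements can never be ruled out. This is exactly the phenomenon that Theorem \ref{thm:non.comp.msr.space} will later exploit.
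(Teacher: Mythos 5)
Your proof is correct. The paper reaches the same conclusion by a related but not identical route: it disjointifies, setting $F_n = R(n) - \bigcup_{m<n} R(m)$, notes that each $F_n$ is again a ring element via the computable union and difference functions so that $\mu(F_n)$ is uniformly computable, and then reads off lower semi-computability from $\mu(X) = \sum_n \mu(F_n)$, a series of uniformly computable nonnegative terms. You instead prove $\mu(X) = \sup_n \mu(R(n))$ by invoking Theorem \ref{thm:density.ring} with $A = X$ (this is where finiteness of $\Omega$ enters in both arguments) and then enumerate the left cut directly. The trade-off is this: the paper's version uses only countable additivity plus the computable ring operations and never appeals to the density theorem, while yours uses the density theorem but needs only the computability of $n \mapsto \mu(R(n))$ and never touches the functions $f,g$ witnessing computability of the ring, so it establishes a formally more general fact. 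Your route also handles more carefully a point the paper elides: the identity $X = \bigcup_n R(n)$ asserted there is not literally forced by the definition of a structure on $\Omega$ (a ring can generate $\mathcal{M}$ as a $\sigma$-algebra without covering $X$); what is true is that $\mu\bigl(X - \bigcup_n R(n)\bigr) = 0$, and justifying that is essentially the density argument you give.
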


\begin{proof}
Suppose $\Omega^\# = (X, \mathcal{M}, \mu, R)$.  Thus, $X = \bigcup_n R(n)$.  Let: 
\[
F_n = R(n) - \bigcup_{m < n} R(m).
\]
Thus, $F_0, F_1, \ldots$ are pairwise disjoint and $X = \bigcup_n F_n$.  Furthermore, 
$\mu(F_n)$ is computable uniformly from $n$.  Thus, $\mu(X) = \sum_n \mu(F_n)$ is lower semi-computable.
\end{proof}

\begin{proof}[Proof of Theorem \ref{thm:non.comp.msr.space}:]
Let $X = [0,1]$.  Let $\mathcal{M}$ denote the $\sigma$-algebra generated by the dyadic 
subintervals of $[0,1]$.  Let $r$ be a positive real that is not lower semicomputable.  
Whenever $A \in \mathcal{M}$, let $\mu(A) = r \cdot m(A)$ where $m$ denotes Lebesgue measure.  
Thus, $\Omega := (X, \mathcal{M}, \mu)$ is a countably generated measure space.
Since $\mu(X) = r$, it follow from Proposition \ref{prop:lsc.measure} that $\Omega$ does not have a computable presentation.

Now, let $\{I_n\}_n$ be a standard enumeration of the dyadic subintervals of $[0,1]$, and let 
$R(n) = r^{-1} \chi_{I_n}$.  It follows that $D$ is a computable structure on $L^p(\Omega)$.
 For, $\norm{R(n)}_p = m(I_n)^{1/p}$, and each sum of the form $\sum_{n = 0}^M \alpha_n R(n)$ can be effectively rewritten as a sum of the form $\sum_{j = 0}^k \beta_j R(n_j)$ where 
$R(n_0), \ldots, R(n_k)$ are disjointly supported.
\end{proof}

\section{Conclusion}\label{sec:conclusion}

A long-term goal of analytic computable structure theory should be to classify the computably categorical Banach spaces.  Among the Banach spaces most encountered in practice in both pure and applied mathematics are the $L^p$ spaces.  So, a nearer-term subgoal is to classify the computably categorical
$L^p$ spaces.  As mentioned in the introduction,  all such spaces must be separable, and therefore their underlying measure spaces must be separable.  As shown in Section \ref{sec:comp.msr.spaces}, the computable presentability of an $L^p$ spaces does not imply the computable presentability of its underlying measure space.  

When analyzing the computable categoricity of $L^p$ spaces, it makes sense to divide them into the $L^p$ spaces of separable atomic spaces and the $L^p$ spaces of the separable non-atomic spaces.  Here, we have resolved the matter of the $L^p$ spaces of non-atomic measure spaces.  
That leaves the atomic spaces to be considered.  These can be divided into those that are purely atomic and those that are not.  Every separable atomic space has countably many atoms.  So, the purely atomic case has already been resolved; namely $\ell^p$ is computably categorical only when $p = 2$ and $\ell^p_n$ is computably categorical for all $p,n$ \cite{McNicholl.2015}, \cite{McNicholl.2016}.  So, only the $L^p$ spaces of non-atomic but not purely atomic spaces remain to be examined, and a future paper will do so.

One consequence of our main result is that when investigating the effective mathematics of 
$L^p[0,1]$, one need not be concerned about the choice of computable presentation as they all yield the same classes of computable points, sequences, and operators.  However, if one wishes to do some actual computing, then the question arises as to whether one presentation could be more advantageous than others.  Accordingly, we pose the question: if $p \geq 1$ is a computable real, and if $L^p[0,1]^\#$, $L^p[0,1]^+$ are two polynomial-time computable presentations of 
$L^p[0,1]$, does it follow that there is a polynomial-time computable isometric isomorphism of 
$L^p[0,1]^\#$ onto $L^p[0,1]^+$?

\section*{Acknowledgements}

The authors thank Ananda Weerasinghe for helpful discussions.  We are very grateful to the referee for many helpful and encouraging comments and for pointing out a few errors in some of the proofs.  The second author was supported in part by Simons Foundation Grant \# 317870.

\def\cprime{$'$}
\providecommand{\bysame}{\leavevmode\hbox to3em{\hrulefill}\thinspace}
\providecommand{\MR}{\relax\ifhmode\unskip\space\fi MR }
% \MRhref is called by the amsart/book/proc definition of \MR.
\providecommand{\MRhref}[2]{%
  \href{http://www.ams.org/mathscinet-getitem?mr=#1}{#2}
}
\providecommand{\href}[2]{#2}

\end{document}